\providecommand*{\cupdot}{%
	\mathbin{%
		\mathpalette\@cupdot{}%
	}%
}
\newcommand*{\@cupdot}[2]{%
	\ooalign{%
		$\m@th#1\cup$\cr
		\hidewidth$\m@th#1\cdot$\hidewidth
	}%
}
\newtheorem{theorem}{Theorem}
\newtheorem{lemma}{Lemma}
\newtheorem{conjecture}{Conjecture}
\newtheorem{proposition}{Proposition}
\newtheorem{remark}{Remark}
\newtheorem{definition}{Definition}
\def\endpf{{\ \hfill\hbox{\vrule width1.0ex height1.0ex}\parfillskip 0pt
	}}
	\newenvironment{proof}{\noindent{\bf Proof:}}{\endpf}
\begin{document}
\title{Useful stochastic bounds  
in time-varying queues with service and patience times having general joint distribution}

\author{Shreehari Anand Bodas \thanks{Bodas acknowledges the support of the European Union’s Horizon 2020 research and innovation
programme [Marie Skłodowska-Curie Grant Agreement 945045] and the NWO Gravitation project
NETWORKS [Grant 024.002.003].} \footnote{Korteweg-de Vries Institute for Mathematics; University of Amsterdam; 1098 XG Amsterdam; Netherlands. E-mail: s.a.bodas@uva.nl }
\and Royi Jacobovic \thanks{Jacobovic acknowledges the support of the Israel Science Foundation, grant \#3739/24.} \footnote{School of Mathematical Sciences, Tel-Aviv University, Tel-Aviv, Israel, 6997800, E-mail: royijacobo@tauex.tau.ac.il.}}
\maketitle
\begin{abstract}
Consider a first-come, first-served single server queue  with an initial workload $x>0$ and customers who arrive according to an inhomogeneous Poisson process with  rate function $\lambda:[0,\infty)\rightarrow[0,\lambda_h ]$ for some $\lambda_h>0$. For each $i\in\mathbb{N}$, let $S_i$ (resp., $Y_i$) be the service (resp., patience) time of the $i$'th customer and assume that $(S_1,Y_1),(S_2,Y_2),\ldots$ is an iid sequence of bivariate random vectors with non-negative coordinates. A customer joins if and only if his patience time is not less than his prospective waiting time (i.e., the left-limit of the workload process at his arrival epoch). Let $\tau(x)$ be the first time when the system becomes empty and let $N^*_\lambda(\cdot)$ be the arrival process of those who join the queue. In the present work we suggest a novel coupling technique which is applied to derive  stochastic upper bounds for the functionals: \begin{equation*}
    \int_0^{\tau(x)}g\circ W_x(t){\rm d}t\ \ \text{and}\ \ \int_0^{\tau(x)}g\circ W_x(t){\rm d}N^*_\lambda(t)\,,
\end{equation*} 
where $W_x(\cdot)$ is the workload process in the queue and $g(\cdot)$ is any lower semi-continuous function. We also demonstrate how to utilise these bounds under the additional assumption that $\lambda(\cdot)$ is periodic in order to study the asymptotics of the tail distribution of the cycle length, deriving stability conditions, integrability of the steady-state distribution, etc.
\end{abstract}
\bigskip
\noindent\textbf{Keywords:}
Time-varying queues, impatient customers, coupling, stochastic dominance.

\section{Introduction}\label{sec: intro}
Queues are an integral part of everyone's daily life, e.g., traffic jams, waiting lines in a supermarket, internet traffic, etc. In standard observable queues  the willingness of the customers to join the queue is decreasing in the observed congestion level at their arrival epochs. Thus, depending on their patience,  some customers will join but others will balk. 

Let $\mathcal{L}$ be the set of all bivariate cdf's whose supports are subsets of $(0,\infty)\times(0,\infty]$ and a general model of a queue with impatient customers is provided in the next definition.
\begin{definition}\label{def: FCFS}
For each $x>0$, Borel function $\lambda:[0,\infty)\rightarrow[0,\infty)$ and $\Psi\in\mathcal{L}$, define the $\text{M}_t/\text{G}(\Psi)/1+\text{H}(\Psi)$ queue (with an initial workload $x$) as follows: 
\begin{enumerate}
    \item It is a single server queue with an infinite waiting room which is operated according to a first-come, first-served (FCFS) discipline.

    \item At time $t=0$, there is one initial customer in the queue whose remaining service time is equal to $x$.

    \item The arrival process  is an inhomogeneous Poisson process with rate $\lambda(\cdot)$.

    \item Let $\left(S_1,Y_1\right),\left(S_2,Y_2\right),\ldots$ be an iid sequence of bivariate random vectors having the cdf $\Psi$ and independent from the arrival process such that for every $i\in\mathbb{N}$:
    \begin{enumerate}
        \item The $i$'th customer joins the queue if and only if, at his arrival epoch, $Y_i$ is not less than his prospective waiting time (i.e., the left-limit of the workload process at his arrival epoch).
 
        \item If the $i$'th customer joins the queue, then his service time is equal to $S_i$.
    \end{enumerate}  
\end{enumerate}
\end{definition} 
For every $\Psi\in\mathcal{L}$, denote
\begin{equation*}
    G(\cdot)\equiv G(\Psi)(\cdot)\equiv \Psi(\cdot,\infty)\,,
\end{equation*}
and
\begin{equation*}
    \ H(\cdot)\equiv H(\Psi)(\cdot)\equiv \Psi(\infty,\cdot)\equiv\lim_{s\uparrow\infty}\Psi(s,\cdot)\,.
\end{equation*}
Note that in the context of Definition \ref{def: FCFS}, for each $i\in\mathbb{N}$,  $S_i$ (resp. $Y_i$) is the service (resp. patience) time of the $i$'th customer. Thus, since $(S_i,Y_i)$ is distributed according to $\Psi(\cdot)$ and $(S_1,Y_1),(S_2,Y_2),\ldots$ is an iid sequence, then $G(\cdot)$ (resp. $H(\cdot)$) is the service (resp. patience) distribution of the $\text{M}_t/\text{G}(\Psi)/1+\text{H}(\Psi)$ queue. In particular, when $\Psi(\cdot)$ has a product-form (i.e., $\Psi(s,y)\equiv G(s)H(y)$ for any $(s,y)\in(0,\infty)\times(0,\infty]$), this model is known in the literature as the $\text{M}_t/\text{G}/1+\text{H}$ queue (see the work by Bodas, Mandjes and Ravner \cite{Bodas2023} for a multi-server version of this model). If, in addition, $\lambda(\cdot)$ is a constant function, then this queue will become M/G/$1$+H queue which have already been studied by several authors, e.g., Baccelli, Boyer and Hebuterne \cite{Baccelli1984}, Bae and Kim \cite{Bae2001}, Perry and Asmussen \cite{Perry1995}, Boxma, Perry, and Stadje \cite{Boxma2011} and Boxma, Perry, Stadje and Zacks, \cite{Boxma2010}. 

A major drawback of many queueing models (including the well-studied $\text{M}/\text{G}/1+\text{H}$ queue) has been identified by Ross \cite{Ross1978}, who wrote:
\begin{quote}
``One of the major difficulties in attempting to apply known queueing theory
results to real problems is that almost always these results assume a time-stationary Poisson arrival process, whereas in practice the actual process is
almost invariably non-stationary."
\end{quote}
Another drawback follows from some studies which revealed an empirical evidence for existence of dependence between customer's service and patience times (see, e.g., the works by De Vries, Roy and De Koster \cite{De Vries2018} and Reich \cite{Reich2012}). 

By contrast, the present work is devoted to studying the $\text{M}_t/\text{G}(\Psi)/1+\text{H}(\Psi)$ queue which includes the more general assumption that the arrival process will be an \textit{inhomogeneous} Poisson process and $\Psi$ does \textit{not} necessarily have a product-form. The analysis will require a special technique which we have not seen elsewhere. The fact that we needed to develop a new methodology in order to analyse the $\text{M}_t/\text{G}(\Psi)/1+\text{H}(\Psi)$ queue is aligned with the following quote by Whitt \cite{Whitt2018}:   
\begin{quote}
    ``Service systems abound with queues, but the most natural direct models are often time-varying queues, which may require nonstandard analysis methods beyond stochastic textbooks." 
 \end{quote}  

\subsection{General description of the problem}
Fix $x>0$ and let $W_\lambda(\cdot;x)$ be the workload process in an $\text{M}_t/\text{G}(\Psi)/1+\text{H}(\Psi)$ queue with an initial workload $x$ and an arrival rate $\lambda(\cdot)$  for which there exists $\lambda_h>0$ such that $\lambda(t)\leq \lambda_h$ for every $t\geq0$. In addition, assume that $W_{\lambda_h}(\cdot;x)$ is the workload process in an $\text{M}/\text{G}(\Psi)/1+\text{H}(\Psi)$ queue with an initial workload $x$ and an arrival rate $\lambda_h$. Theorem 1 in the work by Lewis and Shedler \cite{Lewis1979} implies that whenever $\lambda(t)\leq\lambda_h $ for every $t\geq0$, there exists a probability space with two processes $N_{\lambda_h}(\cdot)$ and $N_\lambda(\cdot)$ such that: $N_{\lambda_h }(\cdot)$ is a homogeneous Poisson process with rate $\lambda_h $ and $N_\lambda(\cdot)$ is an inhomogeneous Poisson process with rate $\lambda(\cdot)$ which is a  thinning of $N_{\lambda_h }(\cdot)$. Therefore, as illustrated by the upper drawing of Figure \ref{fig: stochastic dominance}, when the customers are fully patient (i.e., when $H(y)=0$, for every $y\in\mathbb{R}$) we may couple the $\text{M}_t/\text{G}(\Psi)/1+\text{H}(\Psi)$ queue and the $\text{M}/\text{G}(\Psi)/1+\text{H}(\Psi)$ queue such that 
\begin{equation}\label{eq: stochastic dominance}
 W_{\lambda_h }(t;x)\geq W_{\lambda}(t;x)\ \ , \ \ \forall t,x\geq0\ \ , \ \ \mathbb{P}\text{-a.s.}   
\end{equation}
However, as illustrated by the lower drawing of Figure \ref{fig: stochastic dominance}, if the customers are allowed to have finite patience times, then \eqref{eq: stochastic dominance} becomes false. This observation leads to the following question: Is there some sense (weaker than the one in \eqref{eq: stochastic dominance}) in which $W_{\lambda}$ is stochastically dominated by $W_{\lambda_h}$? In the present work we provide an affirmative answer for this query by specifying a general class of functionals $\mathcal{H}\equiv\{h:\mathcal{D}([0,\infty))\rightarrow[0,\infty];h\in\mathcal{H}\}$ such that 
\begin{equation}\label{eq: functionals}
    h\left[W_\lambda(\cdot;x)\right]\leq_{\normalfont \text{st}}h\left[W_{\lambda_h}(\cdot;x)\right]\ \ , \ \ \forall x\geq0\ , \ h\in\mathcal{H}\,,
\end{equation}
where $\mathcal{D}([0,\infty))$ is the Skorokhod space of all non-negative c\'adl\'ag functions  defined on $[0,\infty)$ and $\leq_{\normalfont \text{st}}$ stands for a first-order stochastic dominance  between the distributions of two random variables (see, e.g., the book by Shaked and Shanthikumar \cite{Shaked2007}).
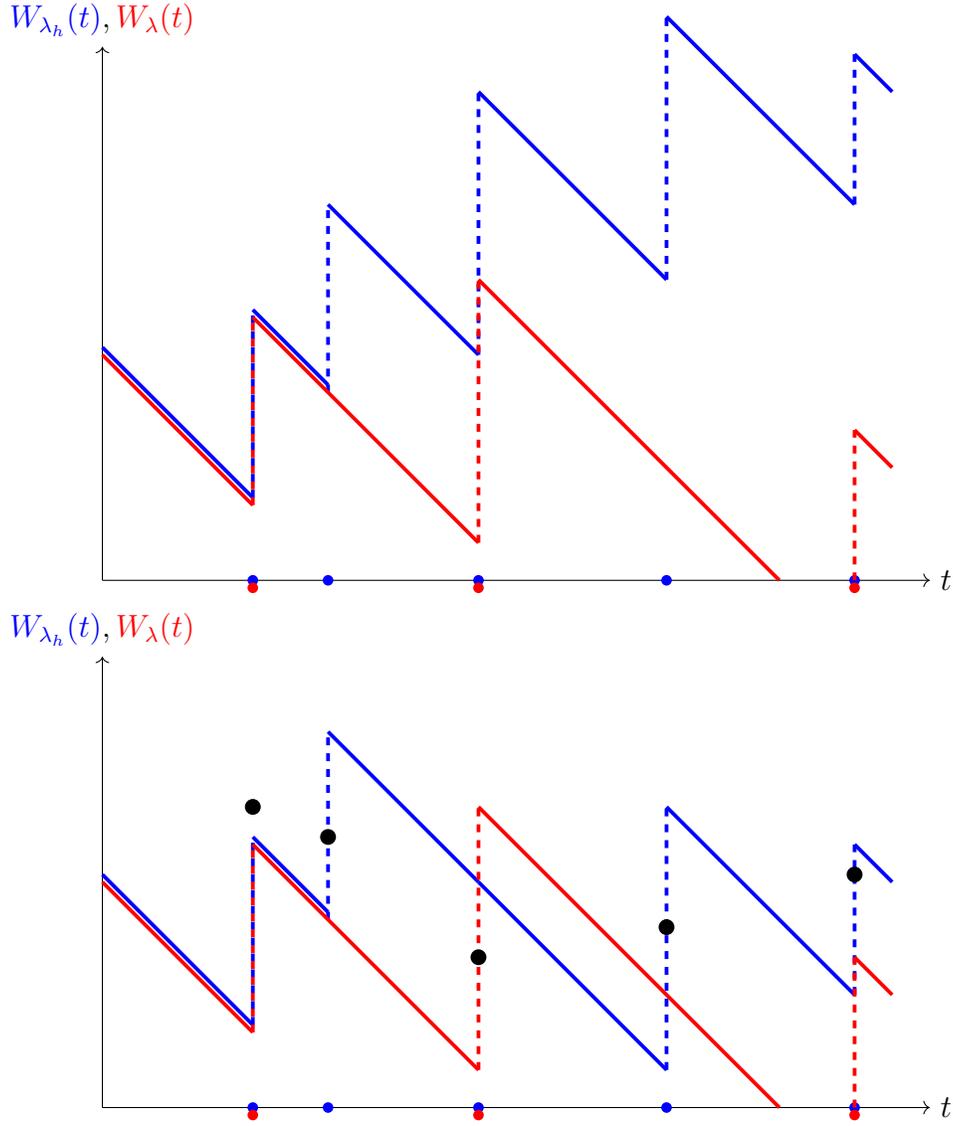
\begin{figure}\centering
\begin{tikzpicture}
        \draw[->] (0,0) -- (11,0) node[right] {$t$};
        \draw[->] (0,0) -- (0,7.1) node[above] {$\textcolor{blue}{W_{\lambda_h}(t)}, \textcolor{red}{W_\lambda(t)}$};


        \fill[color = blue] (canvas cs: x = 2cm, y = 0cm) circle(2pt);
        \fill[color = red] (canvas cs: x = 2cm, y = -0.1cm) circle(2pt);
        
        \fill[color = blue] (canvas cs: x = 3cm,y = 0cm) circle (2pt);
        
        \fill[color = blue] (canvas cs: x = 5cm, y = 0cm) circle (2pt);
        \fill[color = red] (canvas cs: x = 5cm, y = -0.1cm) circle(2pt);
        
        \fill[color = blue] (canvas cs: x= 7.5cm, y = 0cm) circle (2pt);
        
        \fill[color = blue] (canvas cs: x = 10cm, y = 0cm) circle (2pt);
        \fill[color = red] (canvas cs: x = 10cm, y = -0.1cm) circle(2pt);

        \draw[line width=0.5mm, color = blue]{} (0cm, 3.1cm) -- (2cm, 1.1cm);
        \draw[line width=0.5mm, color = blue, dashed]{} (2cm, 1.1cm) -- (2cm, 3.6cm);
        \draw[line width=0.5mm, color = blue]{} (2cm, 3.6cm) -- (3cm, 2.6cm);
        \draw[line width=0.5mm, color = blue, dashed]{} (3cm, 2.5cm) -- (3cm, 5cm);
        \draw[line width=0.5mm, color = blue]{} (3cm, 5cm) -- (5cm, 3cm);
        \draw[line width = 0.5mm, color = blue, dashed]{}{} (5cm, 3cm) -- (5cm, 6.5cm);
        \draw[line width=0.5mm, color = blue]{} (5cm, 6.5cm) -- (7.5cm, 4cm);
        \draw[line width=0.5mm, color = blue, dashed]{} (7.5cm, 4cm) -- (7.5cm, 7.5cm);
        \draw[line width=0.5mm, color = blue]{} (7.5cm, 7.5cm) -- (10cm, 5cm);
        \draw[line width=0.5mm, color = blue, dashed]{}{} (10cm, 5cm) -- (10cm, 7cm);
        \draw[line width=0.5mm, color = blue]{}{} (10cm, 7cm) -- (10.5cm, 6.5cm);

        \draw[line width=0.5mm, color = red]{} (0cm, 3cm) -- (2cm, 1cm);
        \draw[line width=0.5mm, color = red, dashed]{} (2cm, 1cm) -- (2cm, 3.5cm);
        \draw[line width=0.5mm, color = red]{} (2cm, 3.5cm) -- (5cm, 0.5cm);
        \draw[line width=0.5mm, color = red, dashed]{} (5cm, 0.5cm) -- (5cm, 4 cm);
        \draw[line width=0.5mm, color = red]{}{} (5cm, 4cm) -- (9cm, 0cm);
        \draw[line width=0.5mm, color = red, dashed]{}{} (10cm, 0cm) -- (10cm, 2cm);
        \draw[line width=0.5mm, color = red]{}{} (10cm, 2cm) -- (10.5cm, 1.5cm);
    \end{tikzpicture}
    \begin{tikzpicture}
        \draw[->] (0,0) -- (11,0) node[right] {$t$};
        \draw[->] (0,0) -- (0,6) node[above] {$\textcolor{blue}{W_{\lambda_h}(t)}, \textcolor{red}{W_\lambda(t)}$};

        \fill[color = blue] (canvas cs: x = 2cm, y = 0cm) circle(2pt);
        \fill[color = red] (canvas cs: x = 2cm, y = -0.1cm) circle(2pt);
        
        \fill[color = blue] (canvas cs: x = 3cm,y = 0cm) circle (2pt);
        
        \fill[color = blue] (canvas cs: x = 5cm, y = 0cm) circle (2pt);
        \fill[color = red] (canvas cs: x = 5cm, y = -0.1cm) circle(2pt);
        
        \fill[color = blue] (canvas cs: x= 7.5cm, y = 0cm) circle (2pt);
        
        \fill[color = blue] (canvas cs: x = 10cm, y = 0cm) circle (2pt);
        \fill[color = red] (canvas cs: x = 10cm, y = -0.1cm) circle(2pt);

        \draw[line width=0.5mm, color = blue]{} (0cm, 3.1cm) -- (2cm, 1.1cm);
        \draw[line width=0.5mm, color = blue, dashed]{} (2cm, 1.1cm) -- (2cm, 3.6cm);
        \draw[line width=0.5mm, color = blue]{} (2cm, 3.6cm) -- (3cm, 2.6cm);
        \draw[line width=0.5mm, color = blue, dashed]{} (3cm, 2.5cm) -- (3cm, 5cm);
        \draw[line width=0.5mm, color = blue]{} (3cm, 5cm) -- (5cm, 3cm);
        \draw[line width=0.5mm, color = blue]{} (5cm, 3cm) -- (7.5cm, 0.5cm);
        \draw[line width=0.5mm, color = blue, dashed]{} (7.5cm, 0.5cm) -- (7.5cm, 4cm);
        \draw[line width=0.5mm, color = blue]{} (7.5cm, 4cm) -- (10cm, 1.5cm);
        \draw[line width=0.5mm, color = blue, dashed]{}{} (10cm, 1.5cm) -- (10cm, 3.5cm);
        \draw[line width=0.5mm, color = blue]{}{} (10cm, 3.5cm) -- (10.5cm, 3cm);

        \draw[line width=0.5mm, color = red]{} (0cm, 3cm) -- (2cm, 1cm);
        \draw[line width=0.5mm, color = red, dashed]{} (2cm, 1cm) -- (2cm, 3.5cm);
        \draw[line width=0.5mm, color = red]{} (2cm, 3.5cm) -- (5cm, 0.5cm);
        \draw[line width=0.5mm, color = red, dashed]{} (5cm, 0.5cm) -- (5cm, 4cm);
        \draw[line width=0.5mm, color = red]{}{} (5cm, 4cm) -- (9cm, 0cm);
        \draw[line width=0.5mm, color = red, dashed]{}{} (10cm, 0cm) -- (10cm, 2cm);
        \draw[line width=0.5mm, color = red]{}{} (10cm, 2cm) -- (10.5cm, 1.5cm);

        \fill[color = black] (canvas cs: x = 2cm, y = 4cm) circle(3pt);
        \fill[color = black] (canvas cs: x = 3cm, y = 3.6 cm) circle(3pt);
        \fill[color = black] (canvas cs: x = 5cm, y = 2cm) circle(3pt);
        \fill[color = black] (canvas cs: x = 7.5cm, y = 2.4cm) circle(3pt);
        \fill[color = black] (canvas cs: x = 10cm, y = 3.1cm) circle(3pt);
    \end{tikzpicture}
        \caption{In both drawings: (1) The blue graph is an illustration of the workload process in an $\text{M}/\text{G}(\Psi)/1+\text{H}(\Psi)$ queue with an arrival rate $\lambda_h $. (2) The red graph is an illustration of the workload process in an $\text{M}_t/\text{G}(\Psi)/1+\text{H}(\Psi)$ queue in which the arrival process is a thinning of the arrival process in the $\text{M}/\text{G}(\Psi)/1+\text{H}(\Psi)$ queue described by the blue graph. (3) Each arrival to the $\text{M}/\text{G}(\Psi)/1+\text{H}(\Psi)$ (resp., $\text{M}_t/\text{G}(\Psi)/1+\text{H}(\Psi)$) queue is denoted by a blue (resp., red) dot on the horizontal axis at its occurrence epoch. The upper drawing illustrates that when all customers have infinite patience times, the blue graph will never go below the red graph. The lower drawing illustrates that when the customers have finite patience times (noted by thick black circles), the red graph may be above the blue graph and vice-versa.} \label{fig: stochastic dominance}
    \end{figure}

\subsection{The main results}
Consider an $\text{M}_t/\text{G}(\Psi)/1+\text{H}(\Psi)$ queue with an arrival rate function $\lambda(\cdot)$  and an initial workload $x>0$.
Denote the workload process in the queue by $W_{\lambda}(\cdot;x)\equiv W_{\lambda}(\cdot;x,\Psi)$. In addition, consider an $\text{M}/\text{G}(\Psi)/1+\text{H}(\Psi)$ queue with an arrival rate $\lambda_h>0$ and an initial workload $x>0$. Denote the workload process in the second queue by $W_{\lambda_h }(\cdot;x)\equiv W_{\lambda_h }(\cdot;x,\Psi)$. For both of these queues, define the first time that they become empty, i.e.,
\begin{align}\label{eq: tau}
&\tau_\lambda(x)\equiv\tau_\lambda\left(x;\Psi\right)\equiv\inf\left\{t\geq0;W_\lambda\left(t;x,\Psi\right)=0\right\},\\& \nonumber\tau_{\lambda_h }(x)\equiv\tau_{\lambda_h }\left(x;\Psi\right)\equiv\inf\left\{t\geq0;W_{\lambda_h }\left(t;x,\Psi\right)=0\right\}.
\end{align}
Now, take some  Borel function $g:[0,\infty)\rightarrow[0,\infty)$ and introduce the resulting workload functionals by:
    \begin{align*}
    &A_\lambda(g)\equiv A_\lambda(g;x)\equiv A_\lambda(g;x,\Psi)\equiv\int_0^{\tau_\lambda(x;\Psi)}g\circ W_\lambda(t;x,\Psi){\rm d}t\,,\\&A_{\lambda_h }(g)\equiv A_{\lambda_h }(g;x) \equiv A_{\lambda_h }(g;x,\Psi)\equiv\int_0^{\tau_{\lambda_h }(x;\Psi)}g\circ W_{\lambda_h }(t;x,\Psi){\rm d}t\,.\nonumber
\end{align*}
The economic interpretation of $g(\cdot)$ is as follows: For every $w\in[0,\infty)$, $g(w)$ refers to the holding cost of $w$ workload units per infinitesimal unit of time. Thus, $A_\lambda(g;x)$ refers to the total holding cost which is accumulated during the period from time $t=0$, when the queue is initialized with $x$ workload units, and until the queue becomes empty.

For the statement of the main results, define
\begin{equation}\label{eq: G}
    \mathcal{G}\equiv\left\{g:[0,\infty)\rightarrow[0,\infty);\exists\left(g_n\right)_{n=1}^\infty\subseteq\mathcal{C}_b\left([0,\infty)\right)\ \text{s.t.}\ 0\leq g_n\uparrow g\ \text{as}\ n\uparrow\infty\right\},
\end{equation}
 i.e., the set of all non-negative lower semi-continuous functions on $[0,\infty)$. The following Theorems \ref{thm: main1} and \ref{thm: main2} are the main results of this work, as they jointly construct a general class of functionals for which \eqref{eq: functionals} is valid.
\begin{theorem}\label{thm: main1}
If $\lambda(t)\leq \lambda_h$ for all $t\geq0$, then
\begin{equation*}
  A_\lambda(g;x,\Psi)\leq_{\normalfont \text{st}}A_{\lambda_h }(g;x,\Psi),  \ \forall x>0, \ g\in\mathcal{G}, \ \Psi\in\mathcal{L}.
\end{equation*}  
\end{theorem}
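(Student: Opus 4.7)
\medskip
\noindent\textbf{Proof proposal.} My plan would be to prove the theorem in three stages: a preliminary reduction, a coupling setup, and an inductive increment-by-increment argument for the core inequality.

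First, I would reduce to the case where $g$ is bounded continuous. By the very definition of $\mathcal{G}$ in \eqref{eq: G}, any $g\in\mathcal{G}$ is the pointwise increasing limit of some sequence $(g_n)\subseteq\mathcal{C}_b([0,\infty))$, so monotone convergence gives $A_\lambda(g_n;x)\uparrow A_\lambda(g;x)$ and $A_{\lambda_h}(g_n;x)\uparrow A_{\lambda_h}(g;x)$ a.s. (the latter provided $\tau_{\lambda_h}(x)<\infty$ a.s., which can be justified by a comparison with the fully-patient M/G/1 queue and a standard argument that busy periods with impatience are dominated by those without it). First-order stochastic dominance is preserved under monotone a.s.\ limits, so it suffices to prove the theorem for $g\in\mathcal{C}_b([0,\infty))$.

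Next I would set up the coupling on a common probability space using the Lewis--Shedler thinning theorem \cite{Lewis1979}: realise $N_{\lambda_h}$ as a homogeneous Poisson process of rate $\lambda_h$, and build $N_\lambda$ by retaining each point $t$ of $N_{\lambda_h}$ independently with probability $\lambda(t)/\lambda_h$. Attach to each point of $N_{\lambda_h}$ an independent mark $(S_i,Y_i)\sim\Psi$, so that both workload processes $W_\lambda(\cdot;x,\Psi)$ and $W_{\lambda_h}(\cdot;x,\Psi)$ are defined pathwise on the same space, with the $\lambda$-queue simply disregarding the marks at the excluded (thinned-out) points.

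The technical heart of the proof is to enumerate the excluded arrivals $t_1^{\mathrm{ex}}<t_2^{\mathrm{ex}}<\cdots$ in $N_{\lambda_h}\setminus N_\lambda$ (finitely many in any compact interval, by Poisson), and to interpolate between the two queues by incorporating them one at a time. Explicitly, I would define intermediate workload processes $W^{(0)}:=W_\lambda$, $W^{(1)}, W^{(2)},\ldots$, where $W^{(k)}$ is the workload obtained when the arrival point process is $N_\lambda\cup\{t_1^{\mathrm{ex}},\ldots,t_k^{\mathrm{ex}}\}$ with the corresponding marks, and denote by $A^{(k)}(g;x)$ the associated functional. The plan is to establish the single-increment stochastic inequality
\begin{equation*}
A^{(k)}(g;x)\ \leq_{\normalfont\text{st}}\ A^{(k+1)}(g;x),\qquad k=0,1,2,\ldots,
\end{equation*}
then iterate over $k$ and pass to the limit via monotone/Fatou-type considerations, using that only arrivals inside $[0,\tau_{\lambda_h}(x)]$ matter for the right-hand side.

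The hard part, and the step requiring the ``novel coupling technique'' advertised in the abstract, is the per-increment inequality. Pointwise monotonicity fails: as the lower panel of Figure \ref{fig: stochastic dominance} shows, injecting one extra accepted customer can elevate the workload enough to cause later would-be joiners to balk, and can in fact \emph{shorten} the busy period along a given sample path. Consequently the comparison cannot be sample-path but must be distributional. I would resolve this by conditioning on the pre-insertion trajectory and on the added customer's mark, and then re-randomising the post-insertion marks and Poisson arrivals to exploit their i.i.d.\ structure and the memoryless property of the Poisson process; a careful case split on whether the injected customer balks, joins without propagating any change, or joins and forces cascading balks, shows that the conditional expectation of $\int g\circ W^{(k+1)}$ dominates that of $\int g\circ W^{(k)}$ for every $g\in\mathcal{C}_b$, which upgrades via the characterisation of stochastic dominance through bounded increasing test functions. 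The lower semi-continuity/continuity hypothesis on $g$ enters here to control the terminal behaviour of the functional at busy-period ends, where the workload vanishes.
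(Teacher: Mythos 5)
Your route is genuinely different from the paper's, and its core step is not established. The paper does not attempt to compare the $\lambda$-queue and the $\lambda_h$-queue by inserting the thinned-out arrivals one at a time into the FCFS system. Instead it introduces auxiliary $\text{M}_t/\text{G}(\Psi)/1/k+\text{H}(\Psi)$ queues operated under LCFS-PR with a waiting room of size $k$, proves the dominance $A^{(k)}_\lambda\leq_{\text{st}}A^{(k)}_{\lambda_h}$ by induction on $k$ (Proposition \ref{prop: LCFS-PR}), and then lets $k\to\infty$, using that the workload process is discipline-invariant. The whole point of the LCFS-PR device is that each customer who interrupts the initial customer generates a \emph{self-contained excursion} which is itself a level-$k$ queue with a shifted rate function and a conditioned $\Psi$, so the induction hypothesis can be applied excursion by excursion, and the thinning is applied only during the service periods of the initial customer while the waiting periods are refilled by an independent Poisson stream (memorylessness). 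Your single-increment inequality $A^{(k)}(g;x)\leq_{\text{st}}A^{(k+1)}(g;x)$ has no such decomposition available: in FCFS, once the injected customer joins, the two trajectories are driven by the \emph{same} subsequent arrivals and marks but at different workload levels, the acceptance decisions are workload-dependent, and after the first cascading balk the sign of $W^{(k+1)}-W^{(k)}$ is indefinite for the rest of the busy period. The sentence ``a careful case split \ldots shows that the conditional expectation \ldots dominates'' is precisely the assertion that needs a proof, and nothing in the sketch (re-randomising i.i.d.\ marks, memorylessness of the Poisson process) addresses why the comparison survives the first crossing. I do not see how to complete this step, and it is exactly the obstruction that drove the authors to the LCFS-PR construction.

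There is also a logical error in your final ``upgrade.'' Even if you could show that $\mathbb{E}\left[\int g\circ W^{(k)}\right]\leq\mathbb{E}\left[\int g\circ W^{(k+1)}\right]$ for every $g\in\mathcal{C}_b$, this is an inequality between \emph{means} of the functionals (one for each $g$), not first-order stochastic dominance of the random variable $A^{(k)}(g)$ for a fixed $g$. Stochastic dominance requires $\mathbb{E}f\left(A^{(k)}(g)\right)\leq\mathbb{E}f\left(A^{(k+1)}(g)\right)$ for all bounded nondecreasing $f$ acting on the \emph{value} of the integral; varying the cost function $g$ inside the time integral is not a substitute for varying the test function $f$ outside it. (This is why the paper's proof of Proposition \ref{prop: LCFS-PR} couples via pseudo-inverses $F^{-1}(U_i;\cdot)$ so as to obtain an almost-sure, not merely in-mean, comparison of the excursion contributions, and why Section \ref{sec: main proof} carries the nondecreasing test function $f$ through every limit.) Your opening reduction to bounded continuous $g$ is fine, and the thinning setup is the same as the paper's; the proof collapses at the per-increment comparison.
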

Let $N^*_\lambda(\cdot)$ (resp., $N^*_{\lambda_h}(\cdot)$) be the \textit{effective} arrival process in the $\text{M}_t/\text{G}(\Psi)/1+\text{H}(\Psi)$ (resp., $\text{M}/\text{G}(\Psi)/1+\text{H}(\Psi)$) queue, i.e., for each $t\geq0$, $N^*_\lambda(t)$ (resp., $N^*_{\lambda_h}(t)$) is equal to the number of customers who \textit{joined} the $\text{M}_t/\text{G}(\Psi)/1+\text{H}(\Psi)$ (resp., $\text{M}/\text{G}(\Psi)/1+\text{H}(\Psi)$) queue before or at time $t$. Then, take some  Borel function $g:[0,\infty)\rightarrow[0,\infty)$ and introduce the corresponding workload functionals: 
    \begin{align}\label{eq: A definition}
    &A^*_\lambda(g)\equiv A^*_\lambda(g;x)\equiv A^*_\lambda(g;x,\Psi)\equiv\int_0^{\tau_\lambda(x;\Psi)}g\circ W_\lambda(t-;x,\Psi){\rm d}N^*_\lambda(t)\,,\\&A^*_{\lambda_h }(g)\equiv A^*_{\lambda_h }(g;x) \equiv A^*_{\lambda_h }(g;x,\Psi)\equiv\int_0^{\tau_{\lambda_h }(x;\Psi)}g\circ W_{\lambda_h }(t-;x,\Psi){\rm d}N^*_{\lambda_h}(t)\,.\nonumber
\end{align}
Note that once there are customers who balk, the effective arrival process takes into account only those who join the queue, i.e., it is different from the standard arrival process (denoted by $N_\lambda(\cdot)$ and $N_{\lambda_h}(\cdot)$ for the $\text{M}_t/\text{G}(\Psi)/1+\text{H}(\Psi)$ and $\text{M}/\text{G}(\Psi)/1+\text{H}(\Psi)$ queues, respectively), which counts also those who balk. In addition, when the integration is carried out with respect to the effective arrival process, the economic interpretation of $g(\cdot)$ in the resulting workload functional reads: For every $w\geq0$,   $g(w)$ is the cost of letting a new customer to join the queue when the existing workload just before his arrival epoch is equal to $w$. 

\begin{theorem}\label{thm: main2}
If $\lambda(t)\leq \lambda_h$ for every $t\geq0$, then 
\begin{equation*}
    A^*_\lambda(g;x,\Psi)\leq_{\normalfont \text{st}}A^*_{\lambda_h }(g;x,\Psi), \  \forall x>0, \ g\in\mathcal{G}, \ \Psi\in\mathcal{L}\,.
\end{equation*} \end{theorem}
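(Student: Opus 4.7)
The plan is to adapt the coupling argument from Theorem~\ref{thm: main1} so as to replace the Lebesgue-time integrator $\mathrm{d}t$ by the effective-arrival counting measure $\mathrm{d}N^*_\lambda(t)$. First, by the defining property of the class $\mathcal{G}$, fix a sequence $(g_n) \subseteq \mathcal{C}_b([0,\infty))$ with $0 \leq g_n \uparrow g$. Since $N^*_\lambda$ and $N^*_{\lambda_h}$ are non-negative random measures, the monotone convergence theorem yields $A^*_\lambda(g_n;x,\Psi) \uparrow A^*_\lambda(g;x,\Psi)$ almost surely, and likewise for $A^*_{\lambda_h}$. Because stochastic dominance is preserved under pointwise monotone limits, it suffices to prove the theorem for $g \in \mathcal{C}_b([0,\infty))$.

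Next I would place both systems on a single probability space via the Lewis--Shedler thinning already underlying Theorem~\ref{thm: main1}: take a rate-$\lambda_h$ homogeneous Poisson process with arrival epochs $(T_i)_{i \in \mathbb{N}}$ together with independent iid sequences $(U_i) \sim \mathrm{Unif}[0,1]$ and $(S_i,Y_i) \sim \Psi$, and let $N_\lambda$ retain $T_i$ precisely when $U_i \leq \lambda(T_i)/\lambda_h$. Driving both workload processes by the same marks $(S_i,Y_i)$, the two functionals become the explicit sums
\begin{equation*}
A^*_\lambda(g;x,\Psi) = \sum_i g\bigl(W_\lambda(T_i-;x,\Psi)\bigr)\,\mathbf{1}\bigl\{T_i \leq \tau_\lambda(x),\ U_i \leq \lambda(T_i)/\lambda_h,\ Y_i \geq W_\lambda(T_i-;x,\Psi)\bigr\},
\end{equation*}
and analogously for $A^*_{\lambda_h}$ with the $U_i$-indicator removed and every $W_\lambda$ replaced by $W_{\lambda_h}$.

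The crux is to transport the pathwise bookkeeping that produced Theorem~\ref{thm: main1} from the Lebesgue time-integral setting to these sums. My plan is to traverse the arrivals $T_1,T_2,\ldots$ in chronological order and, within each busy cycle of $W_{\lambda_h}$, maintain an injective assignment from the indices $i$ contributing to $A^*_\lambda(g;x,\Psi)$ into the indices $j$ contributing to $A^*_{\lambda_h}(g;x,\Psi)$ that satisfies $g\bigl(W_\lambda(T_i-;x,\Psi)\bigr) \leq g\bigl(W_{\lambda_h}(T_j-;x,\Psi)\bigr)$. The main obstacle --- precisely the one illustrated in the lower panel of Figure~\ref{fig: stochastic dominance} --- is that $W_\lambda$ may transiently exceed $W_{\lambda_h}$, so neither the naive identity matching $i \mapsto i$ nor any pointwise sample-path comparison is available; in particular, $g$ is only lower semicontinuous, not monotone, so even when $W_\lambda \leq W_{\lambda_h}$ holds one cannot conclude $g(W_\lambda) \leq g(W_{\lambda_h})$ arrival by arrival. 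The hard part is therefore to construct a valid matching at the level of entire busy cycles that absorbs the reversals, exploiting the fact that every arrival of $N_\lambda$ is also an arrival of $N_{\lambda_h}$ and that the balking structure is governed by the same marks $(S_i,Y_i)$; once this matching is in place, the pathwise bound $A^*_\lambda(g;x,\Psi) \leq A^*_{\lambda_h}(g;x,\Psi)$ on the coupled space yields the stated stochastic dominance.
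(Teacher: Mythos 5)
Your reduction to bounded continuous $g$ via monotone convergence is fine and matches the corresponding step in the paper. The problem is the third step, which is where the entire content of the theorem lives and which you do not actually carry out: you describe the obstacle (that $W_\lambda$ and $W_{\lambda_h}$ can cross under the Lewis--Shedler thinning coupling, and that $g$ is not monotone) and then assert that ``the hard part is therefore to construct a valid matching \ldots; once this matching is in place'' the result follows. No such matching is constructed, and there is no reason to believe one exists pathwise on the coupled space you set up. Indeed, the whole premise of the paper (see the lower panel of Figure~\ref{fig: stochastic dominance}) is that the naive thinning coupling does \emph{not} yield any pathwise dominance once customers may balk: after the first balking discrepancy the two workload paths, the sets of joining customers, and the pre-arrival workloads they see decouple completely, and with a non-monotone lower semi-continuous $g$ there is no cycle-by-cycle injection $i\mapsto j$ with $g(W_\lambda(T_i-))\leq g(W_{\lambda_h}(T_j-))$ in general. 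A claim of the form $A^*_\lambda\leq A^*_{\lambda_h}$ almost surely on that space is strictly stronger than the theorem and is, as far as the paper's own discussion indicates, false.

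The paper's route is genuinely different and is designed precisely to avoid the pathwise comparison you are attempting. It replaces the FCFS infinite-buffer queue by the LCFS-PR queue with waiting room of size $k$ (Definition~\ref{def: LCFS}), whose workload process converges to the right object as $k\to\infty$; proves the dominance for each finite $k$ by induction (Proposition~\ref{prop: LCFS-PR}), where the induction step decomposes the busy period into the initial customer's service plus the excursions generated by interrupting customers, and couples those excursions \emph{distributionally} --- via quantile transforms $F^{-1}(U_i;d,s,t)$ of a common auxiliary uniform sequence, conditionally on the interruption data $(D_i,S_i,T_i)$ --- rather than pathwise; and only applies the thinning construction during the service periods of the initial customer, supplementing the waiting periods with an independent Poisson stream so that the dominating system is a genuine homogeneous queue. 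The passage $k\to\infty$ then uses Lemma~\ref{lemma: k bound} and the lower semi-continuity of $g$. If you want to salvage your write-up, you would need to import that entire machinery (with ${\rm d}t$ replaced by ${\rm d}N^{*}$ in the functionals, which is the ``minor modification'' the paper alludes to); as it stands, the proposal identifies the difficulty but does not overcome it.
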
 
In particular, notice that the stochastic dominance relations which are stated in Theorem \ref{thm: main1} and Theorem \ref{thm: main2} are valid for every $x>0$. Thus, once the initial state $x$ is distributed over $(0,\infty)$ according to a non-degenerate initial distribution $\pi(\cdot)$, then Theorem \ref{thm: main1} (resp., \ref{thm: main2}) implies that for any bounded nondecreasing function $f:[0,\infty)\rightarrow\mathbb{R}$ we have that
    \begin{equation}\label{eq: initial distribution}
        \int \mathbb{E}_xf\left[A_\lambda(g;x,\Psi)\right]\pi\left({\rm d}x\right)\leq 
        \int \mathbb{E}_xf\left[A_{\lambda_h}(g;x,\Psi)\right]\pi\left({\rm d}x\right), \ \forall g\in\mathcal{G}, \ \Psi\in\mathcal{L}
    \end{equation}
    \begin{equation*}
        \left(\text{resp., } 
        \int \mathbb{E}_xf\left[A^*_\lambda(g;x,\Psi)\right]\pi\left({\rm d}x\right)\leq 
        \int \mathbb{E}_xf\left[A^*_{\lambda_h}(g;x,\Psi)\right]\pi\left({\rm d}x\right), \  \forall g\in\mathcal{G}, \ \Psi\in\mathcal{L}\right),
    \end{equation*}
    where $\mathbb{E}_x$ stands for the conditional expectation with respect to $\mathbb{P}$ given the initial state $x$ (which is distributed according to the initial distribution $\pi(\cdot)$). 
    Thus, by conditioning and un-conditioning on $x$, the statements of Theorem \ref{thm: main1} and Theorem \ref{thm: main2} will remain valid for any $g\in\mathcal{G}$, $\Psi\in\mathcal{L}$ and an initial distribution $\pi(\cdot)$ over $(0,\infty)$. 
\subsection{The Driving Force Behind the Proofs}
This section includes a general description of the main benchmarks in the proofs of the main results. In addition, we provide some explanations of these benchmarks and discuss how they jointly yield the main results. The first step in the proof of Theorem \ref{thm: main1} is to define the following auxiliary queue:
\begin{definition}\label{def: LCFS}
For each $k\in\overline{\mathbb{Z}}_+\equiv\mathbb{Z}_{\geq0}\cup\{\infty\}$, each $x>0$, each Borel function $\lambda:[0,\infty)\rightarrow[0,\infty)$ and each $\Psi\in\mathcal{L}$, define the  {\normalfont  $\text{M}_t/\text{G}(\Psi)/1/k+\text{H}(\Psi)$} queue (with an initial workload $x$) as follows: 
\begin{enumerate}
    \item It is a single server queue with a waiting room of size $k$ {\normalfont(}excluding the one in service{\normalfont)} which is operated according to a preemptive last-come, first-served (LCFS-PR) discipline. 

    \item At time $t=0$, there is one initial customer in the queue whose remaining service time is equal to $x$.

    \item The arrival process  is an inhomogeneous Poisson process with rate $\lambda(\cdot)$.

    \item Let $\big(S_1^{(k)},Y_1^{(k)}\big),\big(S_2^{(k)},Y_2^{(k)}\big),\ldots$ be an iid sequence of bivariate random vectors distributed according to $\Psi$ and independent from the arrival process such that for every $i\in\mathbb{N}$:
    \begin{enumerate}
        \item The $i$'th customer joins the queue if and only if, the next two conditions are satisfied: 
        \begin{itemize}
            \item There is an empty place for him, i.e., the left limit of the queue-length process {\normalfont(}counting also the potential customer in the service position{\normalfont)} at his arrival epoch is less than $k+1$.

            \item $Y_i^{(k)}$ is not less than the left-limit of the workload process at his arrival epoch.

        \end{itemize}  
        \item If the $i$'th customer joins the queue, then his service time is equal to $S_i^{(k)}$.
    \end{enumerate}  
\end{enumerate}
\end{definition}
\begin{remark}
    \normalfont Notably, as stated in Definition \ref{def: FCFS}, the service discipline in an $\text{M}_t/\text{G}(\Psi)/1+\text{H}(\Psi)$ queue is FCFS while, as stated in Definition \ref{def: LCFS}, for any $k\in\overline{\mathbb{Z}}_+$, the service discipline in an $\text{M}_t/\text{G}(\Psi)/1/k+\text{H}(\Psi)$ is LCFS-PR. 
\end{remark}

For each $i\in\mathbb{N}$, we will say that the `patience time' of the $i$'th customer is equal to $Y_i^{(k)}$. Note that the `patience' times in this  $\text{M}_t/\text{G}(\Psi)/1/k+\text{H}(\Psi)$ queue are not really  representing the \textit{patience} of the customers. Since this class of queues is just a benchmark towards the analysis of the $\text{M}_t/\text{G}(\Psi)/1+\text{H}(\Psi)$ queue, we prefer to keep calling them `patience times' in analogy with the original $\text{M}_t/\text{G}(\Psi)/1+\text{H}(\Psi)$ queue. In addition, observe that, by construction, the workload process of the  $\text{M}_t/\text{G}(\Psi)/1/\infty+\text{H}(\Psi)$ queue has the same distribution as the workload process of the $\text{M}_t/\text{G}(\Psi)/1+\text{H}(\Psi)$ queue. 

Now, for each $k\in\overline{\mathbb{Z}}_+$ and $x>0$, denote the workload process in the  $\text{M}_t/\text{G}(\Psi)/1/k+\text{H}(\Psi)$ queue with an initial workload $x$ by $W^{(k)}_{\lambda}(\cdot;x)\equiv W^{(k)}_{\lambda}(\cdot;x,\Psi)$. Then, define the analogues of $\tau_\lambda(x),\tau_{\lambda_h}(x),A_\lambda(g)$ and $A_{\lambda_h}(g)$ in that queue, i.e.,
\begin{align*}
&\tau^{(k)}_\lambda(x)\equiv\tau^{(k)}_\lambda\left(x;\Psi\right)\equiv\inf\left\{t\geq0;W^{(k)}_\lambda\left(t;x,\Psi\right)=0\right\},\\& \nonumber\tau^{(k)}_{\lambda_h }(x)\equiv\tau^{(k)}_{\lambda_h }\left(x;\Psi\right)\equiv\inf\left\{t\geq0;W^{(k)}_{\lambda_h }\left(t;x,\Psi\right)=0\right\},
\end{align*}
and also for any Borel function $g:[0,\infty)\rightarrow[0,\infty)$
    \begin{align*}
    &A^{(k)}_\lambda(g)\equiv A^{(k)}_\lambda(g;x)\equiv A^{(k)}_\lambda(g;x,\Psi)\equiv\int_0^{\tau^{(k)}_\lambda(x;\Psi)}g\circ W^{(k)}_\lambda(t;x,\Psi){\rm d}t\,,\\&A^{(k)}_{\lambda_h }(g)\equiv A^{(k)}_{\lambda_h }(g;x) \equiv A^{(k)}_{\lambda_h }(g;x,\Psi)\equiv\int_0^{\tau^{(k)}_{\lambda_h }(x;\Psi)}g\circ W^{(k)}_{\lambda_h }(t;x,\Psi){\rm d}t\,.\nonumber
\end{align*}
The following lemma is the key to the proof of the main results.
\begin{proposition}\label{prop: LCFS-PR}
Assume that  $\lambda(t)\leq\lambda_h$ for every $t\geq0$. Then, for every Borel function $g:[0,\infty)\rightarrow[0,\infty)$, $x>0$, $\Psi\in\mathcal{L}$ and $k\in\mathbb{Z}_{\geq0}$, the stochastic dominance $A^{(k)}_\lambda\left(g;x,\Psi\right)\leq_{\normalfont \text{st}}A^{(k)}_{\lambda_h }\left(g;x,\Psi\right)$ holds and $A^{(k)}_{\lambda_h }\left(g;x,\Psi\right)$ is finite $\mathbb{P}$-a.s.     
\end{proposition}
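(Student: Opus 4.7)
The plan is to prove the statement by induction on $k$, using a coupling that exploits the LCFS-PR structure. To make the induction run cleanly I first generalize the claim by introducing a pedestal parameter $w\geq 0$: for an LCFS-PR queue with buffer $k$, initial workload $x$, and acceptance rule ``$Y_i \geq w + W(t-)$ together with the space condition $< k{+}1$'', set
\begin{equation*}
\bar A^{(k)}_\lambda(g;x,w,\Psi) := \int_0^{\bar\tau^{(k)}_\lambda(x,w)} g\bigl(w + W(t)\bigr)\,\mathrm{d}t,
\end{equation*}
and aim to establish $\bar A^{(k)}_\lambda(g;x,w,\Psi)\leq_{\mathrm{st}}\bar A^{(k)}_{\lambda_h}(g;x,w,\Psi)$ for every $w\geq 0$; the proposition is the $w=0$ case, and the pedestal $w$ will represent the residual workload of preempted ancestor customers that appears in the recursion. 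The base case $k=0$ is trivial because the space condition rejects every arrival and both sides equal the deterministic $\int_0^x g(w+x-u)\,\mathrm{d}u$.

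For the inductive step, the key observation is that, when the busy period is re-indexed by the \emph{service time} $u\in[0,x]$ of the initial customer (ignoring wall-clock time that elapses inside preempting sub-busy-periods), the workload equals $x-u$ regardless of the dynamics. Consequently, in the $\lambda_h$-queue the accepted arrivals in service time form a (non-doubly-stochastic) Poisson process on $[0,x]$ with intensity $\lambda_h\bar H(w+x-u)\,\mathrm{d}u$, each carrying a mark drawn from $\Psi(\cdot\mid Y\geq w+x-\sigma_i)$, and each launching a sub-busy-period that is itself a generalized LCFS-PR-$(k{-}1)$ queue with initial workload $S_i$ and pedestal $w+x-\sigma_i$, yielding
\begin{equation*}
\bar A^{(k)}_{\lambda_h}(g;x,w,\Psi) = \int_0^x g(w+x-u)\,\mathrm{d}u + \sum_i \bar A^{(k-1)}_{\lambda_h}\bigl(g;\,S_i,\,w+x-\sigma_i,\,\Psi\bigr).
\end{equation*}
The coupling with the $\lambda$-queue is built on this same service-time Poisson process: processing $\sigma_1<\sigma_2<\cdots$ in order, I keep $\sigma_i$ for the $\lambda$-queue with an independent Bernoulli of success probability $\lambda(t_\lambda(\sigma_i))/\lambda_h$, where $t_\lambda(\sigma_i)$ is the wall-clock time at which the $\lambda$-queue reaches service-time $\sigma_i$, read off from the previously kept arrivals' sub-busy-periods (simulated via the inductive-hypothesis coupling at level $k-1$). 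A standard sequential-thinning argument shows that the kept points form, conditionally on the dynamics, a Cox process with the required intensity $\lambda(t_\lambda(u))\bar H(w+x-u)\,\mathrm{d}u$, so the marginal law of the $\lambda$-queue is correct; for each kept $\sigma_i$ the inductive hypothesis gives $\bar A^{(k-1)}_\lambda(g;S_i,w+x-\sigma_i,\Psi)\leq \bar A^{(k-1)}_{\lambda_h}(g;S_i,w+x-\sigma_i,\Psi)$ almost surely, while for each non-kept $\sigma_i$ the $\lambda$-sum simply omits a non-negative term, giving $\bar A^{(k)}_\lambda\leq \bar A^{(k)}_{\lambda_h}$ a.s.\ under the coupling. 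Finiteness of $\bar\tau^{(k)}_{\lambda_h}$ (hence of $\bar A^{(k)}_{\lambda_h}$ for suitably integrable $g$) falls out of the same induction, since the number of accepted arrivals is Poisson with finite mean $\lambda_h\int_0^x\bar H(w+x-u)\,\mathrm{d}u$ and each sub-busy-period is a.s.\ finite by hypothesis.

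The main obstacle is that a direct Lewis--Shedler thinning of the wall-clock arrival processes does \emph{not} produce pointwise dominance of the integrals --- already at $k=1$ with $g\equiv 1$, removing a single accepted early arrival can allow a later arrival with tight patience to be accepted in its absence and thereby \emph{lengthen} the busy period. The essential technical step is therefore to relocate the coupling into the service time of the initial customer, where the $\lambda_h$-queue's accepted-arrival intensity is deterministic in the dynamics, and to formalize the recursive dynamic thinning carefully so that the $\lambda$-queue's marginal distribution is preserved even though each keep-probability depends on the $\lambda$-queue's own prior sub-busy-periods, whose joint law with their $\lambda_h$-counterparts must be provided by the inductive coupling at level $k-1$.
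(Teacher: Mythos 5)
Your proposal follows essentially the same route as the paper's proof: induction on the buffer size, decomposition of the level-$(k{+}1)$ functional into the initial customer's deterministic integral plus the interrupters' sub-busy-period functionals, thinning of the arrival stream only along the initial customer's elapsed service time (with the memoryless property handling the preempted/waiting periods), and an inductive coupling that dominates each sub-busy-period functional almost surely. Your pedestal parameter $w$ is precisely the paper's device of invoking the level-$k$ hypothesis with the shifted cost $g(\cdot+S_i)$ and shifted distribution $\Psi(\cdot,\cdot+S_i)$, and your sequential dynamic thinning with keep-probability $\lambda(t_\lambda(\sigma_i))/\lambda_h$ plays the role of the paper's Lewis--Shedler thinning restricted to the service periods of the initial customer.
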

The proof of Proposition \ref{prop: LCFS-PR} uses an induction on $k$. Namely, the initial case $k=0$ is trivial. Then, the induction step will follow via a coupling technique that we will introduce later.  Some idea of how the induction hypothesis will be applied may be described as follows: Consider an $\text{M}_t/\text{G}(\Psi)/1/(k+1)+\text{H}(\Psi)$ queue with an initial customer whose service time is equal to $x$. Observe that each customer who interrupts the service of the initial customer initiates a busy period of an $\text{M}_t/\text{G}\big(\widetilde{\Psi}\big)/1/k+\text{H}\big(\widetilde{\Psi}\big)$ queue which starts at his arrival epoch and lasts until he leaves the queue. Importantly, in that $\text{M}_t/\text{G}\big(\widetilde{\Psi}\big)/1/k+\text{H}\big(\widetilde{\Psi}\big)$ queue, the arrival rate is equal to a random translation of $\lambda$ depending on the arrival epoch of the customer who initiated this excursion. In addition, $\widetilde{\Psi}$ is a random element in $\mathcal{L}$ which depends on the state of the queue at the arrival epoch of the customer who initiates that busy period.

The challenge stems from the fact that there might be several interruptions to the service of the initial customer, leading to several busy periods of queues having different arrival rate functions and different service-patience times joint distributions. In the proof we will explain how to exactly couple them properly in order to make Proposition \ref{prop: LCFS-PR} follow from the induction hypothesis.   

The proof of Theorem \ref{thm: main1} will be done by applying the stochastic dominance guaranteed by Proposition \ref{prop: LCFS-PR} for any finite $k$ and taking the limit $k\to\infty$ carefully. In that step we will need to apply the assumption that $g(\cdot)$ is lower semi-continuous. 

A high-level explanation why this methodology works out is as follows: To begin with, fix $x>0$ and $\Psi\in\mathcal{L}$. Then, consider all  $\text{M}_t/\text{G}(\Psi)/1/k+\text{H}(\Psi)$ queues (for any $k\in\overline{\mathbb{Z}}_+$) in the same probability space. We set them such that they share the same arrival process, service times and patience times, i.e., the only perturbation in their definitions is due to the differences in their waiting room sizes.  Then, on any \textit{fixed} finite time-window, the workload process of the  $\text{M}_t/\text{G}(\Psi)/1/k+\text{H}(\Psi)$ queue converges as $k\to\infty$, $\mathbb{P}$-a.s. to the workload process of the  $\text{M}_t/\text{G}(\Psi)/1/\infty+\text{H}(\Psi)$ queue. Consequently, as explained before, the workload process of the  $\text{M}_t/\text{G}(\Psi)/1/\infty+\text{H}(\Psi)$ queue has the same distribution as the workload process of the $\text{M}_t/\text{G}(\Psi)/1+\text{H}(\Psi)$ queue and Theorem \ref{thm: main1} eventually follows. 

\subsection{Periodic $\text{M}_t/\text{G}(\Psi)/1+\text{H}(\Psi)$ queues and applications}\label{subsec: applications}
In this part we introduce a consequence of Theorems \ref{thm: main1} and \ref{thm: main2} in an $\text{M}_t/\text{G}(\Psi)/1+\text{H}(\Psi)$ queue with periodic arrival rate. Namely, 
consider an $\text{M}_t/\text{G}(\Psi)/1+\text{H}(\Psi)$ queue which is empty at time $t=0$. Assume that the arrival rate in the queue $\lambda(\cdot)$ has a period $\kappa>0$, i.e., 
$\lambda(t)=\lambda(t+\kappa)$ for every $t\geq0$. Due to this assumption, $W_\lambda(\cdot)\equiv W_\lambda(\cdot;0)$ has a regenerative epoch at 
\begin{equation*}
    \xi_\lambda\equiv\inf\left\{\kappa n;\  W_\lambda(\kappa n)=0, \ n\in\mathbb{N}\right\}\,.
\end{equation*}
The theory of regenerative processes (see, e.g., Chapter IV of the book by Asmussen \cite{Asmussen2003}) suggests studying the distribution of functionals over cycles, i.e.,
\begin{equation}\label{eq: form1}
   \mathcal{A}_\lambda(g)\equiv \mathcal{A}_\lambda(g;\Psi)\equiv\int_0^{\xi_\lambda}g\circ W_\lambda(t){\rm d}t\,,
\end{equation}
or
\begin{equation*}
\mathcal{A}^*_\lambda(g)\equiv \mathcal{A}^*_\lambda(g;\Psi)\equiv\int_0^{\xi_\lambda}g\circ W_\lambda(t-){\rm d}N_\lambda^*(t)\,.
\end{equation*}
In the sequel, we will prove that the main results stated in Theorems \ref{thm: main1} and \ref{thm: main2} jointly imply the next Theorem \ref{thm: super stochastic bound} which includes upper stochastic bounds for $\mathcal{A}_\lambda(g)$ and $\mathcal{A}^*_\lambda(g)$. In practice, the upper stochastic bounds stated in Theorem \ref{thm: super stochastic bound} are relatively tractable since they are both geometric decompoundings (translated by the $\max\{1,\kappa\}$) of the distributions of the analogue workload functionals (over busy periods) associated with queue having a homogeneous arrival process.

For what follows, for any $n\in\mathbb{N}$ and a univariate cdf $F(\cdot)$, let $F^{\star n}(\cdot)$ be its $n$-fold convolution. Also, for any random variable $Z$, we will denote its cdf by $\text{dist}(Z)$. 
\begin{theorem}\label{thm: super stochastic bound}
    Assume that $g\in\mathcal{G}$, $\Psi\in\mathcal{L}$ and $\lambda(\cdot)$ has a period $\kappa>0$ such that $\lambda(t)\leq\lambda_h\in(0,\infty)$ for every $t\geq0$. In addition, let
    \begin{align*}
        &F_{\lambda_h}(\cdot)\equiv F_{\lambda_h}(\cdot;g,\Psi)\equiv{\normalfont\text{dist}}\left[A_{\lambda_h}(g;x,\Psi)\right],\\&\nonumber F^*_{\lambda_h}(\cdot)\equiv F^*_{\lambda_h}(\cdot;g,\Psi)\equiv{\normalfont\text{dist}}\left[A_{\lambda_h}^*(g;x,\Psi)\right],
    \end{align*}  
    when the initial distribution of $x$ is $G(\cdot)\equiv G(\Psi)(\cdot)$. In addition,   
    denote the next two geometric decompoundings of $F_{\lambda_h}(\cdot)$ and $F^*_{\lambda_h}(\cdot)$:
    \begin{align*}
    &J_{\lambda_h}(u;g,\Psi)\equiv\sum_{n=1}^\infty\left(1-e^{-\kappa\lambda_h}\right)^{n-1}e^{-\kappa\lambda_h}F^{\star n}_{\lambda_h}(u-\max\{1,\kappa\}), \ \forall u\in\mathbb{R}\,,\\& J^*_{\lambda_h}(u;g,\Psi)\equiv\sum_{n=1}^\infty\left(1-e^{-\kappa\lambda_h}\right)^{n-1}e^{-\kappa\lambda_h}\left(F^*_{\lambda_h}\right)^{\star n}(u-\max\{1,\kappa\}), \ \forall u\in\mathbb{R}\,.  \nonumber 
    \end{align*}
    Then, 
    \begin{equation*}{\normalfont\text{dist}}\left[\mathcal{A}_\lambda(g;\Psi)\right]\leq_{\normalfont \text{st}}J_{\lambda_h}(\cdot;g,\Psi)\ \ \text{and}\ \ {\normalfont\text{dist}}\left[\mathcal{A}^*_\lambda(g;\Psi)\right]\leq_{\normalfont \text{st}}J^*_{\lambda_h}(\cdot;g,\Psi)\,. 
    \end{equation*}
\end{theorem}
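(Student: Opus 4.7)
The plan is to prove the bound on $\mathcal{A}_\lambda(g;\Psi)$; the bound on $\mathcal{A}_\lambda^*(g;\Psi)$ is obtained verbatim by replacing Theorem \ref{thm: main1} with Theorem \ref{thm: main2} (so each busy-sub-period contribution is dominated by $A_{\lambda_h}^*(g;S_i,\Psi)\sim F_{\lambda_h}^*$) and by noting that integration with respect to $\rd N_\lambda^*$ vanishes on idle intervals.

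I would first decompose the cycle $[0,\xi_\lambda)$ into its successive busy sub-periods separated by idle intervals. Since the cycle starts with an empty queue and ends at a multiple of $\kappa$, every busy sub-period is triggered by a customer arriving to an empty queue; such a customer has prospective waiting time zero, so she always joins and the initial workload of the sub-period equals her own service time, which is $G$-distributed. Writing $M$ for the (random) number of sub-periods, $T_i,B_i$ for their start times and durations, $S_i\sim G$ for the triggering service times, and $I$ for the total idle time in the cycle, one has
$$
\mathcal{A}_\lambda(g;\Psi) \;=\; g(0)\,I \;+\; \sum_{i=1}^M \int_{T_i}^{T_i+B_i} g\circ W_\lambda(t)\,\rd t .
$$
Conditionally on $T_i$, the $i$-th busy sub-period is exactly a busy period of an $\mathrm{M}_t/\mathrm{G}(\Psi)/1+\mathrm{H}(\Psi)$ queue with initial workload $S_i$ and arrival rate $\lambda(T_i+\cdot)\leq\lambda_h$, so Theorem \ref{thm: main1} yields $\int_{T_i}^{T_i+B_i} g\circ W_\lambda(t)\,\rd t\leq_{\mathrm{st}} A_{\lambda_h}(g;S_i,\Psi)\sim F_{\lambda_h}$, and the iid structure of $(S_i,Y_i)$ allows the dominating variables to be taken iid.

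To control $M$, I would couple the $\lambda$- and $\lambda_h$-arrival processes via Lewis-Shedler thinning and define the indicators $Z_n = \mathbf{1}\{\text{no }\lambda_h\text{-arrival in }[(n-1)\kappa,n\kappa)\}$. The $Z_n$ are iid Bernoulli$(e^{-\lambda_h\kappa})$, so $K:=\inf\{n\geq 1:Z_n=1\}\sim\mathrm{Geom}(e^{-\lambda_h\kappa})$. A period-by-period argument then gives $M\leq K$ almost surely, because a silent period of the $\lambda_h$-process is also silent for the $\lambda$-queue and hence cannot start a new sub-period, while the $\kappa$-wide silent window aligns the draining of the queue with the cycle-ending grid. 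This produces exactly the geometric weights $(1-e^{-\lambda_h\kappa})^{n-1}e^{-\lambda_h\kappa}$ appearing in $J_{\lambda_h}$.

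The main obstacle I foresee is absorbing the remaining contributions into the additive shift $\max\{1,\kappa\}$ that appears inside $F_{\lambda_h}^{\star n}(u-\max\{1,\kappa\})$. This shift must cover both the idle-time term $g(0)\,I$ (trivial when $g(0)=0$ but delicate for general lower semi-continuous $g$) and the boundary effects at the beginning of the cycle and between the last sub-period and $\xi_\lambda$; the natural route is to bound these residuals deterministically using only the $\kappa$-spacing of the cycle grid together with the coupling, noting that for $u<\max\{1,\kappa\}$ the inequality is trivial. After this, un-conditioning on $M$ (padded, if necessary, by independent non-negative $F_{\lambda_h}$-draws up to $K$) and convolving the marginals yields the geometric-decompounding form of $J_{\lambda_h}(u;g,\Psi)$, and the analogous reasoning with Theorem \ref{thm: main2} in place of Theorem \ref{thm: main1} gives $J_{\lambda_h}^*(u;g,\Psi)$.
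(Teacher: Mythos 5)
Your overall architecture (busy-period decomposition, domination of each busy sub-period by an iid $F_{\lambda_h}$-draw via Theorem \ref{thm: main1} together with a quantile/uniform coupling, and a Lewis--Shedler thinning to produce the geometric count) matches the paper's strategy, but the step controlling the number of summands is wrong. You claim $M\leq K$ where $K$ is the index of the first $\kappa$-window $[(n-1)\kappa,n\kappa)$ containing no $\lambda_h$-arrival. This fails: a single $\kappa$-window can contain arbitrarily many complete busy sub-periods (e.g.\ several arrivals with very short service times, each starting and ending inside $[0,\kappa)$), so $M$ can exceed $K$ with positive probability. The paper's counting is indexed by \emph{idle periods}, not by grid windows: it sets $\iota_\lambda=\inf\{i:|I_\lambda^i|>\kappa\}$, observes that the first idle period of length greater than $\kappa$ must contain a grid point $\kappa n$ at which $W_\lambda(\kappa n)=0$, hence $\xi_\lambda\leq\zeta_\lambda$ and the cycle contains at most $\iota_\lambda-1$ busy periods; then the coupling compares $|I_\lambda^i|$ with the $\mathrm{Exp}(\lambda_h)$ time $I^i_{\lambda_h}$ until the next $\lambda_h$-arrival after $\inf I_\lambda^i$, and the strong Markov property makes the events $\{I^i_{\lambda_h}>\kappa\}$ iid Bernoulli$(e^{-\kappa\lambda_h})$, whence $\iota_\lambda\leq\iota_{\lambda_h}\sim\mathrm{Geom}(e^{-\kappa\lambda_h})$. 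This is the idea your proposal is missing.

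A second, related gap is the idle-time term $g(0)I$. You propose to absorb it \emph{deterministically} into the single shift $\max\{1,\kappa\}$, but $I$ is the sum of all idle stretches in the cycle and is not deterministically bounded; what the idle-period indexing buys is precisely the bound $I\leq\iota_\lambda\kappa$ (each of the first $\iota_\lambda-1$ idle periods has length at most $\kappa$, and only the portion of the $\iota_\lambda$-th up to the first grid point it contains is charged), i.e.\ a random multiple of $\kappa$ that is then carried along with the geometric number of jumps, exactly as in the upper bound $\iota_{\lambda_h}\kappa g(0)+\sum_{i=1}^{\iota_{\lambda_h}-1}A^i_{\lambda_h}$ of Proposition \ref{prop: bound1}. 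Without replacing the window-count by the idle-period count, neither the geometric weights nor the treatment of $g(0)I$ can be completed.
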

Some applications of Theorem \ref{thm: super stochastic bound} are given below:\newline\newline 
\textbf{Application 1 (tail asymptotics of $\xi_\lambda$):} In the sequel we will show that Theorem \ref{thm: super stochastic bound} implies the following upper bound on the rate at which $\mathbb{P}\left\{\xi_\lambda>u\right\}$ vanishes as $u\to\infty$.
\begin{theorem}\label{thm: heavy-tail}    Assume that $\rho_h\equiv\lambda_h\int_0^\infty y{\rm d}G(y)<1$ and let $\varphi$ be the length of the first busy period  in a {\normalfont FCFS M/G/1} queue with an arrival rate $\lambda_h$ and an initial workload equal to zero. If $G(\cdot)\equiv G(\Psi)(\cdot)$ is subexponential such that 
\begin{equation}\label{eq: heavy tail}
\lim_{u\to\infty}\frac{\mathbb{P}\left\{\varphi>u\right\}}{1-G\left[u(1-\rho_h)\right]}=(1-\rho_h)^{-1},
\end{equation}
then 
    \begin{equation*}
     \lim_{u\to\infty}\sup\frac{\mathbb{P}\left\{\xi_\lambda>u\right\}}{1-G\left\{[u-g(0)\max\{\kappa,1\}](1-\rho_h)\right\}}\leq \frac{1-e^{-\kappa\lambda_h}}{(1-\rho_h)e^{-\kappa\lambda_h}}\,.  
    \end{equation*}
\end{theorem}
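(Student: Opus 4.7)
The plan is to specialise Theorem~\ref{thm: super stochastic bound} to $g\equiv 1\in\mathcal{G}$, pathwise-dominate the M/G/1+H busy period by a classical M/G/1 busy period, and apply standard heavy-tail asymptotics to the resulting geometric random sum.

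First, with $g\equiv 1$ one has $A_{\lambda_h}(1;x,\Psi)=\tau_{\lambda_h}(x;\Psi)$, $\mathcal{A}_\lambda(1;\Psi)=\xi_\lambda$, and $g(0)=1$. Theorem~\ref{thm: super stochastic bound} then yields
\[
\mathbb{P}\{\xi_\lambda>u\}\leq \sum_{n=1}^\infty(1-e^{-\kappa\lambda_h})^{n-1}e^{-\kappa\lambda_h}\,\mathbb{P}\{T_n>u-\max\{1,\kappa\}\},
\]
where $T_n=\tau^{(1)}+\cdots+\tau^{(n)}$ for iid copies of $\tau_{\lambda_h}(X;\Psi)$ with $X\sim G$. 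Equivalently, $\xi_\lambda\leq_{\normalfont\text{st}}T_N+\max\{1,\kappa\}$ where $N$ is geometric on $\{1,2,\ldots\}$ with parameter $e^{-\kappa\lambda_h}$, independent of the $\tau^{(i)}$'s.

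Next I would construct an explicit coupling: on a common probability space, run the M/G/1+H queue and a pure M/G/1 queue, both started at workload $X\sim G$ and driven by the same homogeneous Poisson arrivals of rate $\lambda_h$ and the same service marks, but declaring every customer to join in the M/G/1 variant. Since abandonment can only decrease the workload, the M/G/1 workload dominates the M/G/1+H workload pathwise, and therefore $\tau_{\lambda_h}(X;\Psi)\leq_{\normalfont\text{st}}\varphi$, which by convolution gives $T_n\leq_{\normalfont\text{st}}\varphi_1+\cdots+\varphi_n$ for iid copies $\varphi_i$ of $\varphi$.

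Third, I would apply heavy-tail asymptotics. Hypothesis~\eqref{eq: heavy tail} together with subexponentiality of $G$ forces $\varphi$ to be subexponential with $\mathbb{P}\{\varphi>u\}\sim(1-\rho_h)^{-1}[1-G(u(1-\rho_h))]$; in particular, for each fixed $n$, $\mathbb{P}\{\varphi_1+\cdots+\varphi_n>u\}\sim n\,\mathbb{P}\{\varphi>u\}$. To interchange this limit with the geometric sum over $n$ I invoke Kesten's bound: for every $\varepsilon>0$ there is a finite $C_\varepsilon$ with $\mathbb{P}\{\varphi_1+\cdots+\varphi_n>u\}\leq C_\varepsilon(1+\varepsilon)^n\mathbb{P}\{\varphi>u\}$ uniformly in $n,u$. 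Picking $\varepsilon$ with $(1-e^{-\kappa\lambda_h})(1+\varepsilon)<1$ (possible since $1-e^{-\kappa\lambda_h}<1$) makes the dominating series summable, so dominated convergence applies and the limsup reduces to a standard geometric-moment computation that, after combining with the tail asymptotic of $\varphi$ evaluated at the shifted argument $u-\max\{1,\kappa\}$ and a careful accounting of the first-cycle contribution absorbed into the $\max\{1,\kappa\}$ translation, yields the stated constant.

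The main obstacle is Step~3, the rigorous interchange of the $u\to\infty$ limit with the geometric sum, for which Kesten's bound is the standard tool; the requisite contraction $(1-e^{-\kappa\lambda_h})(1+\varepsilon)<1$ is automatic. A secondary subtlety is that the coupling only delivers $\tau_{\lambda_h}(X;\Psi)\leq_{\normalfont\text{st}}\varphi$ rather than an asymptotic equivalence, so the conclusion is a one-sided limsup bound, precisely as stated in the theorem.
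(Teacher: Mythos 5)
Your proposal follows essentially the same route as the paper: specialise Theorem~\ref{thm: super stochastic bound} to $g\equiv 1$, dominate the $\text{M}/\text{G}(\Psi)/1+\text{H}(\Psi)$ busy period pathwise by the classical M/G/1 busy period via the ``everyone joins'' coupling (this is exactly the paper's Lemma~\ref{lemma: heavy-tailsss} with $\Psi_\infty$), and then apply compound-geometric subexponential asymptotics; the paper invokes Corollary~3 of Embrechts--Goldie--Veraverbeke at this last step, whose proof is precisely your Kesten-bound/dominated-convergence argument, so the two are interchangeable. One bookkeeping point deserves attention: reading Theorem~\ref{thm: super stochastic bound} literally, the weight $(1-e^{-\kappa\lambda_h})^{n-1}e^{-\kappa\lambda_h}$ multiplies the $n$-fold convolution, so your ``geometric-moment computation'' produces $\mathbb{E}N=e^{\kappa\lambda_h}$ and hence the constant $\frac{1}{(1-\rho_h)e^{-\kappa\lambda_h}}$, which is strictly larger than the stated $\frac{1-e^{-\kappa\lambda_h}}{(1-\rho_h)e^{-\kappa\lambda_h}}=\frac{e^{\kappa\lambda_h}-1}{1-\rho_h}$. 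To obtain the stated constant one must use the form of the bound actually delivered by Proposition~\ref{prop: bound1}, namely $\iota_{\lambda_h}\kappa g(0)+\sum_{i=1}^{\iota_{\lambda_h}-1}A^i_{\lambda_h}$, in which only $\iota_{\lambda_h}-1$ busy periods appear, so the relevant mean is $\mathbb{E}(\iota_{\lambda_h}-1)=(1-e^{-\kappa\lambda_h})/e^{-\kappa\lambda_h}$. The paper's own write-up is not entirely consistent on this index, so this is a matter of aligning your final computation with the intended form of the geometric sum rather than a flaw in your method; everything else in your argument (subexponentiality of $\varphi$ inherited from that of $G$ by tail equivalence and scaling, the one-sidedness of the conclusion due to the coupling giving only $\leq_{\text{st}}$) is correct and matches the paper.
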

\begin{remark}
    \normalfont De Meyer and Teugels \cite{De Meyer1980} proved that \eqref{eq: heavy tail} is valid whenever $G(\cdot)$ is regularly-varying. Later, Zwart \cite{Zwart2001} extended this result to the class of all intermediate regularly-varying. It turns out that \eqref{eq: heavy tail} is valid also when the tail of $G(\cdot)$ is heavier than $e^{-\sqrt{u}}$ (for the precise details see the works by Jelenkovi\'c, and Mom\v cilovi\'c \cite{Jelenkovic2004} and Baltr\=unas, Daley and Kl\"uppelberg \cite{Baltrunas2004}). Another related work is by Asmussen, Kl\"uppelberg and Sigman \cite{Asmussen1999}, who explained why \eqref{eq: heavy tail} cannot hold when the tail of $G(\cdot)$ is lighter than $e^{-\sqrt{u}}$.
\end{remark}
\textbf{Application 2 (tail asymptotics of $\int_0^{\xi_\lambda}{\rm d}N_\lambda^*(t)$):} Let $\eta^*_\lambda$ be the number of customers who get service during the first cycle of the $\text{M}_t/\text{G}(\Psi)/1+\text{H}(\Psi)$ queue, i.e.,  $\eta^*_\lambda\equiv\int_0^{\xi_\lambda}{\rm d}N^*_\lambda(t)$. Then, in the sequel we will show that Theorem \ref{thm: super stochastic bound} implies the next result concerning an upper bound for the rate at which $\mathbb{P}\left\{\eta_\lambda^*>u\right\}$ vanishes as $u\to\infty$.
\begin{theorem}\label{thm: heavy-tail11}
    Assume that $\rho_h<1$ and denote the number of customers who get service during the first busy period in a {\normalfont FCFS M/G/1} queue by $\eta$.  If $G(\cdot)\equiv G(\Psi)(\cdot)$ is subexponential such that 
    \begin{equation}\label{eq: heavy tail11}
    \lim_{u\to\infty}\frac{\mathbb{P}\left\{\eta>u\right\}}{1-G\left[\frac{u(1-\rho_h)}{\lambda_h}\right]}=(1-\rho_h)^{-1}\,, 
    \end{equation}
    then, 
    \begin{equation*}
     \lim_{u\to\infty}\sup\frac{\mathbb{P}\left\{\eta^*_\lambda>u\right\}}{1-G\left\{\frac{\left[u-g(0)\max\{\kappa,1\}\right](1-\rho_h)}{\lambda_h}\right\}}\leq \frac{1-e^{-\kappa\lambda_h}}{(1-\rho_h)e^{-\kappa\lambda_h}}\,.  
    \end{equation*}
\end{theorem}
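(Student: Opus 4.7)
The plan mirrors the approach behind Theorem \ref{thm: heavy-tail}, but applied to the counting functional. Setting $g\equiv 1$ we have $\eta^*_\lambda=\mathcal{A}^*_\lambda(1;\Psi)$, so Theorem \ref{thm: super stochastic bound} immediately yields
\begin{equation*}
\mathbb{P}\{\eta^*_\lambda>u\}\leq 1-J^*_{\lambda_h}(u;1,\Psi)=\sum_{n=1}^{\infty}e^{-\kappa\lambda_h}(1-e^{-\kappa\lambda_h})^{n-1}\bigl[1-(F^*_{\lambda_h})^{\star n}(u-\max\{1,\kappa\})\bigr].
\end{equation*}
What remains is to estimate the tail of this geometric decompounding in terms of $\bar G$.

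I would first dominate $F^*_{\lambda_h}(\cdot;1,\Psi)$, which is the law of the number of customers joining during a busy period of the homogeneous $\text{M}/\text{G}(\Psi)/1+\text{H}(\Psi)$ queue initialised with workload $x\sim G$, by the distribution of $\eta$. A pathwise coupling that disables the patience/balking mechanism---keeping all arrival epochs and service times the same---can only admit more customers and prolong the busy period; under this coupling we are looking at an $\text{M}/\text{G}/1$ busy period initialised with workload $x\sim G$, whose customer count has the same law as $\eta$ (up to a single initial customer, absorbable by long-tailedness). Hence $F^*_{\lambda_h}(\cdot;1,\Psi)\leq_{\text{st}}\text{dist}(\eta)$, and since stochastic order is preserved under iid convolution, $(F^*_{\lambda_h})^{\star n}\leq_{\text{st}}\text{dist}(\eta_1+\cdots+\eta_n)$ for every $n\geq 1$.

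Next I would apply the subexponential compound-sum asymptotic. Hypothesis \eqref{eq: heavy tail11} combined with subexponentiality of $G$ forces $\eta$ to be subexponential, since the subexponential class is closed under tail equivalence with positive affine scaling of argument. With $M$ geometric on $\{1,2,\ldots\}$ of parameter $q:=e^{-\kappa\lambda_h}$ (so $\mathbb{E}[M]=1/q$ and $\mathbb{E}z^M<\infty$ for some $z>1$), the classical result gives
\begin{equation*}
\mathbb{P}\!\left\{\sum_{i=1}^{M}\eta_i>v\right\}\sim \mathbb{E}[M]\cdot\mathbb{P}\{\eta>v\},\qquad v\to\infty.
\end{equation*}
Setting $v=u-\max\{1,\kappa\}$, invoking long-tailedness of $\bar G$ to absorb the shift, and using \eqref{eq: heavy tail11} to rewrite $\mathbb{P}\{\eta>v\}$ in terms of $\bar G$, one obtains the announced $\limsup$ inequality. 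The $(1-e^{-\kappa\lambda_h})$ factor in the numerator of the stated constant arises by peeling off the $n=1$ term of the decompounding, which represents a cycle consisting of a single $\kappa$-interval with no preceding busy period at all and therefore contributes nothing beyond the deterministic shift.

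The main obstacle is the third step: verifying subexponentiality of $\eta$ from \eqref{eq: heavy tail11} with enough uniformity to license the compound-sum asymptotic, and carefully tracking the geometric weights to recover exactly the announced constant $(1-e^{-\kappa\lambda_h})/\bigl[(1-\rho_h)e^{-\kappa\lambda_h}\bigr]$. The first two steps---the reduction via Theorem \ref{thm: super stochastic bound} and the coupling-based dominance of $F^*_{\lambda_h}$ by $\text{dist}(\eta)$---are essentially automatic.
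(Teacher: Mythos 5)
Your proposal follows essentially the same route as the paper: reduce via Theorem \ref{thm: super stochastic bound} with $g\equiv1$, dominate the per-busy-period joining count by the fully patient ($\Psi_\infty$) M/G/1 count $\eta$ through a coupling that disables balking (the paper's Lemma \ref{lemma: heavy-tailsss} in its $A^*$ form), and then apply the Embrechts--Goldie--Veraverbeke compound-geometric asymptotic together with \eqref{eq: heavy tail11}. The only point to tighten is the constant bookkeeping at the end — the paper obtains $(1-e^{-\kappa\lambda_h})/e^{-\kappa\lambda_h}$ as the mean of the number of busy periods per cycle (i.e., $\iota_{\lambda_h}-1$, geometric on $\{0,1,\dots\}$), which is exactly what your ``peeling off the $n=1$ term'' accomplishes.
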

\begin{remark}
    \normalfont Some conditions on $G(\cdot)$ which imply \eqref{eq: heavy tail11} appear in the works by Baltr\=unas, Daley and Kl\"uppelberg \cite{Baltrunas2004}, Denisov and Shneer \cite{Denisov2010} and Doney \cite{Doney1989} (basically $G(\cdot)$ should be either intermediate regularly-varying or have a tail which is heavier than $e^{-\sqrt{u}}$ with some additional moment conditions).
\end{remark}

\begin{remark}\normalfont
    Note that Theorem \ref{thm: heavy-tail} as well as Theorem \ref{thm: heavy-tail11} are valid for any $\Psi\in\mathcal{L}$, i.e., even when $\Psi$ does not have a product-form.
\end{remark}
\textbf{Application 3 (stability condition):} Let $\tau_{\lambda_h}$ be a random variable having the distribution of $\tau_{\lambda_h}(x)$ (see, \eqref{eq: tau}) with an initial state $x$ which is distributed according to the service distribution $G(\cdot)$. When $g\equiv1$, Theorem \ref{thm: super stochastic bound} implies that $\mathbb{E}\xi_\lambda<\infty$ whenever $\mathbb{E}\tau_{\lambda_h}<\infty$. Equivalently, this means that whenever the M/$\text{G}(\Psi)$/1+$\text{H}(\Psi)$ queue (with an arrival rate $\lambda_h$) is stable, the $\text{M}_t/\text{G}(\Psi)/1+\text{H}(\Psi)$ queue is also stable. Baccelli, Boyer and Hebuterne \cite{Baccelli1984} proved that under the assumption that $\Psi(\cdot)$ has a product-form, the stability condition of the dominating M/G/1+H queue is 
\begin{equation}\label{eq: stability conditionnn}
   \lambda_h\int_0^\infty y{\rm d}G(y)\lim_{y\uparrow\infty}\left[1-H(y)\right]<1\,.
\end{equation}
Note that as $0\cdot\infty$ is undefined, this condition implicitly implies that $G(\cdot)$ has a first finite moment. 

Note that the fact that the $\text{M}_t/\text{G}(\Psi)/1+\text{H}(\Psi)$ queue is stable under the assumption that $\Psi(\cdot)$ has a product-form and \eqref{eq: stability conditionnn} is valid can also be proven by applying Lyapunov functions; see, e.g., Theorem 4.1 (and its proof) in the recent work by Bodas, Mandjes, and Ravner \cite{Bodas2023}. 
\newline\newline 
\textbf{Application 4 (integrability of the steady-state distribution):} Assume that the $\text{M}_t/\text{G}(\Psi)/1+\text{H}(\Psi)$ queue is stable, e.g., when $\Psi(\cdot)$ has a product-form and \eqref{eq: stability conditionnn} is valid. Consequently, since the cycle length distribution is nonarithmetic, there exists a steady-state (limiting) workload distribution, i.e., a random variable $W_\lambda(\infty)$ such that $W_\lambda(t)\xrightarrow{{\normalfont\text{d}}}W_\lambda(\infty)$ as $t\to\infty$. It is known that by setting $g(w)\equiv w$, we get
\begin{equation*}
    \mathbb{E}W_\lambda(\infty)=\frac{\mathbb{E}\mathcal{A}_\lambda(g)}{\mathbb{E}\xi_\lambda}\,,
\end{equation*}
and we want to derive sufficient conditions under which $\mathbb{E}W_\lambda(\infty)<\infty$. 

Applying Theorem \ref{thm: super stochastic bound} with $g(w)\equiv w$ yields that it is sufficient to derive conditions under which $\int_0^{\tau_{\lambda_h}}W_{\lambda_h}(t){\rm d}t$ has finite mean. When $\Psi$ has a product-form, Baccelli, Boyer and Hebuterne \cite{Baccelli1984} showed that, $\mathbb{E}\int_0^{\tau_{\lambda_h}}W_{\lambda_h}(t){\rm d}t<\infty$ whenever the stability condition \eqref{eq: stability conditionnn} and $\int_0^\infty y^2{\rm d}G(y)<\infty$ are both satisfied. 
\newline\newline
\textbf{Application 5 (finite moments of $\eta^*_\lambda$):}  Motivated by certain estimation problem,   Bodas, Mandjes and Ravner \cite{Bodas2023} assumed that $\Psi(\cdot)$ had a product-form and they looked for conditions on the primitives of the model under which $\eta_\lambda^*$ has a finite second moment. The following theorem is a consequence of Theorem \ref{thm: super stochastic bound} and it provides an answer for a more general question.  
\begin{theorem}\label{thm: eta}
    Fix $m\in\mathbb{N}$ and assume that: {\normalfont (i)} $\Psi$ has product-form. {\normalfont (ii)} The stability condition \eqref{eq: stability conditionnn} is satisfied.  
    \begin{enumerate}
        \item If $m=1$, then $\mathbb{E}\eta^*_\lambda<\infty$.

        \item If $m\geq2$ and  $\int_0^\infty y^m{\rm d}G(y)<\infty$, then $\mathbb{E}\left(\eta^*_\lambda\right)^m<\infty$.
        \end{enumerate} 
\end{theorem}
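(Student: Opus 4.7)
The plan is to apply Theorem \ref{thm: super stochastic bound} with $g\equiv 1\in\mathcal{G}$. Since $\eta^*_\lambda=\mathcal{A}^*_\lambda(1;\Psi)$, that theorem yields
\begin{equation*}
\eta^*_\lambda\,\leq_{\normalfont\text{st}}\,\max\{1,\kappa\}+\sum_{i=1}^N X_i,
\end{equation*}
where $N$ is geometric with parameter $e^{-\kappa\lambda_h}$ and the $X_i$ are i.i.d.\ copies of $A^*_{\lambda_h}(1;x,\Psi)$ with $x$ distributed according to $G$. From the convexity inequality $\bigl(\sum_{i=1}^n a_i\bigr)^m\le n^{m-1}\sum_{i=1}^n a_i^m$ and the independence of $N$ from $(X_i)_{i\ge 1}$ one obtains $\mathbb{E}\bigl(\sum_{i=1}^N X_i\bigr)^m\le \mathbb{E}[N^m]\,\mathbb{E}[X_1^m]$. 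Because $N$ is geometric, $\mathbb{E}[N^m]$ is finite for every $m\in\mathbb{N}$, so the theorem reduces to showing $\mathbb{E}[X_1^m]<\infty$.

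To control $\mathbb{E}[X_1^m]$, I would couple the $\text{M}/\text{G}(\Psi)/1+\text{H}(\Psi)$ queue with a standard $\text{M}/\text{G}/1$ queue of arrival rate $\lambda_h$, service distribution $G$, the same initial workload $x\sim G$, and the same realization of Poisson arrivals and service times (patience times are irrelevant for the $\text{M}/\text{G}/1$ queue). A short induction on arrival epochs shows that each balking event in the impatience queue only suppresses a workload increment that \emph{does} occur in the $\text{M}/\text{G}/1$ trajectory, whence $W_{\lambda_h}(t;x,\Psi)\le \widetilde{W}(t;x)$ throughout the common busy cycle and $\tau_{\lambda_h}(x;\Psi)\le\widetilde{\tau}(x)$. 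Consequently, the number of joins in the impatience-queue busy period is at most the number of Poisson arrivals during the (longer) $\text{M}/\text{G}/1$ busy period, which equals $\widetilde{\eta}(x)-1$, where $\widetilde{\eta}(x)$ counts all customers served in the $\text{M}/\text{G}/1$ busy period starting from workload $x$. Therefore $X_1\leq_{\normalfont\text{st}}\widetilde{\eta}(x)$.

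The problem thereby reduces to the classical question of when $\widetilde{\eta}(x)$ with $x\sim G$ has a finite $m$-th moment. Via the standard branching decomposition $\widetilde{\eta}(x)=1+\sum_{j=1}^{N_{\lambda_h}(x)}\widetilde{\eta}_j$, with $N_{\lambda_h}(x)\sim\text{Poisson}(\lambda_h x)$ and $\widetilde{\eta}_j$ i.i.d.\ copies of the standard $\text{M}/\text{G}/1$ busy-period count $\widetilde{\eta}$, one obtains $\mathbb{E}\widetilde{\eta}(x)^m<\infty$ as soon as both $\int_0^\infty y^m\,dG(y)<\infty$ (which controls the mixed-Poisson factor) and $\mathbb{E}\widetilde{\eta}^m<\infty$ hold. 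The latter is the key classical fact: under $\rho_h<1$, $\mathbb{E}\widetilde{\eta}^m<\infty$ iff $\mathbb{E}S^m<\infty$, provable via the subcritical Galton--Watson total-progeny moment characterization whose offspring law is mixed Poisson in $S$. For $m=1$ the assumption $\int_0^\infty y\,dG(y)<\infty$ is already implicit in the stability condition \eqref{eq: stability conditionnn}, establishing part 1; for $m\ge 2$ the hypothesis $\int_0^\infty y^m\,dG(y)<\infty$ delivers part 2. The main obstacle is precisely this moment transfer from $G$ to $\widetilde{\eta}$, which is classical but must be invoked with the correct direction of implication; once it is in hand, everything else flows directly from Theorem \ref{thm: super stochastic bound} and the $\text{M}/\text{G}/1$ coupling.
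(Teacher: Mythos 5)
Your opening reduction is the same as the paper's: apply Theorem \ref{thm: super stochastic bound} with $g\equiv1$, note that the geometric variable has all moments, and conclude that it suffices to bound the $m$'th moment of the number of joins $X_1=A^*_{\lambda_h}(1;x,\Psi)$ during one busy period of the $\text{M}/\text{G}(\Psi)/1+\text{H}(\Psi)$ queue started from $x\sim G$. That part is fine.

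The gap is in the second step, where you discard impatience and dominate $X_1$ by the customer count $\widetilde{\eta}(x)$ of a standard $\text{M}/\text{G}/1$ busy period. The pathwise inequality $W_{\lambda_h}(t;x,\Psi)\le\widetilde{W}(t;x)$ is correct (it is the paper's Lemma \ref{lemma: heavy-tailsss}), but the resulting bound is useful only if the comparison $\text{M}/\text{G}/1$ queue is subcritical, i.e.\ only if $\rho_h=\lambda_h\int_0^\infty y\,{\rm d}G(y)<1$. Theorem \ref{thm: eta} does \emph{not} assume $\rho_h<1$; it assumes only the stability condition \eqref{eq: stability conditionnn}, which reads $\rho_h\cdot\lim_{y\uparrow\infty}[1-H(y)]<1$, i.e.\ $\rho_h\,\mathbb{P}\{Y=\infty\}<1$. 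When customers have finite patience with positive probability this is strictly weaker than $\rho_h<1$; in the extreme case where all patience times are finite, \eqref{eq: stability conditionnn} holds for \emph{any} finite $\rho_h$. If $\rho_h\ge1$ the $\text{M}/\text{G}/1$ busy period is infinite with positive probability, $\widetilde{\eta}(x)$ has no finite moments (indeed $\widetilde{\eta}(x)=\infty$ on a set of positive probability when $\rho_h>1$), and your bound $X_1\le_{\text{st}}\widetilde{\eta}(x)$ becomes vacuous. So your argument proves the theorem only under the extra hypothesis $\rho_h<1$, which is a genuinely weaker statement. (The Galton--Watson moment-transfer fact you invoke is correct, but it too lives entirely in the subcritical regime.)

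The paper circumvents this by never removing the impatience mechanism. It inflates every service time by a small $\varepsilon>0$ chosen so that the perturbed $\text{M}/\text{G}_\varepsilon/1+\text{H}$ queue still satisfies the stability condition (Lemma \ref{lemma: epsilon} gives the stochastic domination of the join count by its perturbed analogue), and then uses the elementary pathwise bound $\varepsilon\,\mu_{\lambda_h,\varepsilon}\le\tau_{\lambda_h,\varepsilon}$ — each join contributes at least $\varepsilon$ of work — to reduce everything to the $m$'th moment of the busy period of the \emph{impatient} queue. That moment is then controlled by a Foster--Lyapunov drift argument (Lemmas \ref{lemma: rate} and \ref{lemma: moments}) whose drift inequality explicitly involves the factor $1-H(\cdot)$, which is exactly where the weaker stability condition \eqref{eq: stability conditionnn} enters. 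To repair your proof you would need to replace the all-join $\text{M}/\text{G}/1$ comparison by a comparison that retains balking, or restrict the claim to $\rho_h<1$.
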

In particular, Theorem \ref{thm: eta} is useful when $g(w)\leq P(w)$ for every $w\geq0$ for some polynomial $P(\cdot)$. Namely, let $p$ be the degree of $P(\cdot)$ and observe that $W_{\lambda_h}(t-)\leq \xi_\lambda$ for every $t\in[0,\xi_\lambda]$. Thus, whenever $\eta^*_\lambda$ has a finite $(kp+1)$'th moment, $\mathcal{A}^*_\lambda(g)$ has a finite $k$'th moment. Conditions under which $\eta^*_\lambda$ has a finite $m\equiv(kp +1)$'th moment are supplied in the statement of Theorem \ref{thm: eta}.

\subsection{Related literature}\label{subsec: literature}
\textbf{Dependence between service and patience times:} Empirical evidence for the existence of dependence between service times and patience times of customers were provided by De Vries, Roy and De Koster \cite{De Vries2018} for restaurants and by  Reich \cite{Reich2012} for contact centers. Motivated by this empirical evidence, Wu, Bassamboo and Perry \cite{Wu2019} studied the impact of dependence between the service and patience times of the customers on different performance metrics of the queue, e.g., expected waiting times and average queue length, etc. They claimed in their research that the dependence structure makes the exact analysis intractable and hence their analysis is based on a stationary fluid approximation and simulation experiments which take into account the entire joint distribution of the service and patience times. Another work by Wu, Bassamboo and Perry \cite{Wu2022} includes a statistical procedure to estimate the dependence between the patience and service times. Yu and Perry \cite{Yu2023} referred to heavy-traffic limits in the case when both the service and patience times are exponentially distributed and perfectly correlated. Lastly, in a recent short communication, Moyal and Perry put forth an open question regarding the existence of a functional weak law of large numbers for the queue process, and a corresponding weak law of large numbers (WLLN) for the stationary distribution in the general case. 

Two conceptual differences between the present work and the above-mentioned strand of literature are: (1) To the best of our knowledge, our work is the first one to study a queue with both \textit{dependent} service and patience times as well as \textit{inhomogeneous}-time arrival process. (2) In the above-mentioned literature, each customer is forced to join the queue and he leaves the system either when his patience time has already expired or when he enters the service position before his service time has expired. 
\newline\newline 
\textbf{M/G/1+H queue:}
The stability condition of the M/G/1+H queue can be found in the work by Baccelli, Boyer and Hebuterne \cite{Baccelli1984}. The density of the stationary virtual-waiting time distribution in this queue for different choices of patience distributions was derived by several authors, including, Baccelli, Boyer and Hebuterne \cite{Baccelli1984}, Bae and Kim \cite{Bae2001} and Perry and Asmussen \cite{Perry1995}. The steady-state distribution of the number of customers in the M/G/1+H queue was studied by Boxma, Perry, and Stadje \cite{Boxma2011}. The distribution of the length of the busy period in the M/G/1+H queue was analysed by Boxma, Perry, Stadje and Zacks, \cite{Boxma2010}. Brandt and Brandt \cite{Brandt2013} suggested a unified approach leading to an extension of the results from the previous works by Boxma \textit{et al.} \cite{Boxma2010,Boxma2011}. A generalization of the the M/G/1+H queue in a different direction (by considering batch arrivals) was studied by Inoue, Boxma, Perry and Zacks \cite{Inoue2018}. Recently, Inoue, Ravner and Mandjes \cite{Inoue2023} derived certain asymptotic properties of the maximum-likelihood  estimator in a parametric estimation problem defined in a multi-server version of the M/G/1+H queue.
\newline\newline\textbf{Queues with periodic arrival rates:} To the best of our knowledge, the only existing work regarding a queue with balking customers and periodic arrival rate is a recent paper by Bodas, Mandjes and Ravner \cite{Bodas2023} who studied  some asymptotic properties of the maximum-likelihood  estimator in a parametric estimation problem defined in a multi-server version of that model. Another related work is by Massey and Whitt, \cite{Massey1994} who studied a multi-server version of that model with no waiting-room, i.e., if a customer arrives when all servers are occupied, he will immediately balk. Other works about time-varying queues with no balking and periodic arrival rates are by, e.g., Harrison and Lemoine \cite{Harrison1977}, Lemoine \cite{Lemoine1989}, Rolski \cite{Rolski1987,Rolski1989} and Whitt \cite{Whitt2014}. 
\newline\newline \textbf{Workload functionals:}
Recently, Glynn, Jacobovic and Mandjes  \cite{Glynn2023} studied the high-order joint moments of workload functionals in L\'evy-driven queues when $g(\cdot)$ is a polynomial. It is also conventional to consider $g(w)\equiv e^{-\alpha w}$ for some $\alpha>0$. This choice is usually needed in order to derive the Laplace-Stieltjes transform of the steady-state distribution of the workload process in queues with a homogeneous arrival process, see, e.g., the works by Kella and Whitt \cite{Kella1991, Kella1992}. 

Workload functionals that are defined via an integration with respect to the arrival process have been studied so far in the context of queueing models with a homogeneous arrival processes and no balking customers. Some examples of such works are, e.g., by Glynn \cite{Glynn1994}, Glynn, Jacobovic and Mandjes \cite{Glynn2023}, Jacobovic, Levering and Boxma \cite{Jacobovic2023a} and Jacobovic and Mandjes \cite{Jacobovic2024}.\newline\newline
\textbf{Stochastic dominance in queueing models:}
Bhattacharya and Ephremides \cite{Bhattacharya1991} derived a first-order stochastic dominance relation between certain workload functionals in a certain class of multi-server queues with impatient customers who arrive according to a \textit{homogeneous}-time process. Another relevant work is by Jouini and Dallery \cite{Jouini2007} in which there are some results regarding first-order stochastic dominance relation between certain workload functionals in an M/M/k/K+M queue, i.e., when there are $k$ servers, waiting room of size $K\in\mathbb{N}$ and $G(\cdot)$ as well as $H(\cdot)$ are exponential distributions (not necessarily with the same rate). Other results about stochastic monotonicity of functionals in queueing networks were derived by Shanthikumar and Yao \cite{Shanthikumar1986,Shanthikumar1987,Shanthikumar1989}. Results regarding stochastic monotonicity of functionals in tandem queues appear in the works by Adan and Van der Wal \cite{Adan1989} and Van Dijk and Van der Wal \cite{Van Dijk1989}. In addition, the seminal paper by Ross \cite{Ross1978} initiated another branch of literature about existence of an increasing-convex stochastic order between certain functionals of queues with an inhomogeneous arrival processes. Some prominent works in this direction are, e.g., by Heyman \cite{Heyman1982} and Rolski \cite{Rolski1981, Rolski1986}. Note that to the best of our knowledge, there are no subsequent works in this direction regarding queues with balking, stressing a solid line between these works and the present one. 
\subsection{Structure of the work}
Section \ref{sec: proof main results} includes the detailed proofs of the main results (Theorems \ref{thm: main1} and \ref{thm: main2}). Section \ref{sec: proof super bound} is devoted for the proof (based on the main results) of Theorem \ref{thm: super stochastic bound}. In Section \ref{sec: heavy tails} one finds the proof of Theorem \ref{thm: heavy-tail} and Theorem \ref{thm: heavy-tail11}. The proof of Theorem \ref{thm: eta} is given in Section \ref{sec: proof eta}. Lastly, Section \ref{sec: discussion} includes a discussion about the contents of the present work, with some optional directions for future research.

\section{Proofs of Theorems \ref{thm: main1} and \ref{thm: main2}}\label{sec: proof main results}
In this section we introduce the proofs of the main results of the present work (Theorem \ref{thm: main1} and Theorem \ref{thm: main2}). Since the proofs of these two theorems are essentially identical (up to a few minor modifications), we will only provide a detailed proof of Theorem~\ref{thm: main1}. 

\subsection{Proof of Proposition \ref{prop: LCFS-PR}}\label{sec: proposition}
As announced in Section 1.3, we use an inductive argument.
\subsubsection*{The initial step:} Observe that $\tau_\lambda^{(0)}(x;\Psi)=\tau_{\lambda_h }^{(0)}(x;\Psi)=x$ and also
    \begin{equation*}
    W_{\lambda}^{(0)}(t;x,\Psi)=W_{\lambda_h }^{(0)}(t;x,\Psi)=x-t\ \ , \ \ \forall \ t\geq0\,, x>0\ , \ \Psi\in\mathcal{L}\,, 
    \end{equation*}
    i.e., the result holds true for $k=0$.

\subsubsection*{The induction step:}
Consider an $\text{M}_t/\text{G}(\Psi)/1/k+\text{H}(\Psi)$ queue with an arrival rate $\lambda(\cdot)$ and assume that Proposition \ref{prop: LCFS-PR} is true for $k\in\mathbb{Z}_{\geq0}$. Denote the (random) number of customers who interrupt the service of the initial customer by $\eta$. Without loss of generality, assume that the underlying probability space of this model maintains an iid sequence $U_1,U_2,\ldots$ of random variables which are uniformly distributed on $[0,1]$ and independent of  the queueing model. 

For each $1\leq i\leq \eta$ we will use the following notations:

\begin{enumerate}
        \item Let $c_i$ be the $i$-th customer who is interrupting the service of the initial customer.

        \item Let $T_i$ be the arrival time of $c_i$. 

        \item Let $S_i$ be the remaining service time of the initial customer at the arrival time of $c_i$. In particular, this means that  $S_1> S_2>\cdots>S_\eta$, $\mathbb{P}$-a.s.

        \item Let $I_i$ be the period when $c_i$ is present in the queue, i.e., $I_i$ is the period between the arrival and departure times of $c_i$. 

        \item Let $D_i$ be the service time of $c_i$.
        
    \end{enumerate}
         Then, Definition \ref{def: LCFS} yields
         \begin{equation}\label{eq: decomposition111}
             A^{(k+1)}_\lambda(g;x,\Psi)=\int_0^xg\left(x-t\right){\rm d}t+\sum_{i=1}^\eta\int_{I_i}g\circ W_\lambda^{(k+1)}(t;x,\Psi){\rm d}t
         \end{equation}
         where 
        \begin{equation*}
             \int_{I_i}g\circ W_\lambda^{(k+1)}(t;x,\Psi){\rm d}t\stackrel{{\normalfont \text{d}}}{=}A^{(k)}_{\lambda(T_i+\cdot)}\left[g(\cdot+S_i);D_i,\Psi\left(\cdot,\cdot+S_i\right)\right]
         \end{equation*}
        for $1\leq i\leq \eta$. Note that \eqref{eq: decomposition111} is based on the fact that the initial customer receives all his service requirement with probability one. To explain this fact, recall that for each $1\leq i\leq\eta$, $c_i$ initiates a busy period of an $\text{M}_t/\text{G}\big(\widetilde{\Psi}_i\big)/1/k+\text{H}\big(\widetilde{\Psi}_i\big)$ queue (for some $\widetilde{\Psi}_i$ to be specified later) that starts at his arrival epoch and lasts until he leaves the queue. Thus, the induction hypothesis (with $g\equiv1$) implies that $I_1,I_2,\ldots,I_\eta<\infty$, $\mathbb{P}$-a.s. Therefore, since $\eta<\infty$, $\mathbb{P}$-a.s., the initial customer gets all his service requirement with probability one justifying the first term (i.e., the deterministic integral) in the RHS of \eqref{eq: decomposition111}.
        
        Now, for each $1\leq i\leq \eta$ fix values $(d,s,t)\in\mathcal{S}\equiv\text{Supp}\left(D_i,S_i,T_i\right)$ and let $F(\cdot;d,s,t)$ (resp., $G(\cdot;d,s)$) be the cdf of 
        \begin{equation*}
         A^{(k)}_{\lambda(t+\cdot)}\left[g(\cdot+s);d,\Psi\left(\cdot,\cdot+s\right)\right]\ \ \left(\text{resp., } A^{(k)}_{\lambda_h }\left[g(\cdot+s);d,\Psi\left(\cdot,\cdot+s\right)\right]\right).  
        \end{equation*}
       For each $i\in\mathbb{N}$ define two new random variables 
        \begin{equation*}
            V_i(d,s,t)\equiv F^{-1}\left(U_i;d,s,t\right), \qquad Z_i(d,s)\equiv G^{-1}\left(U_i;d,s\right),
        \end{equation*}
        where $F^{-1}\left(\cdot;d,s,t\right)$ (resp., $G^{-1}\left(\cdot;d,s\right)$) is the pseudo-inverse of $F\left(\cdot;d,s,t\right)$ (resp., $G\left(\cdot;d,s\right)$). The induction hypothesis yields that for each $(d,s,t)\in\mathcal{S}$, $V_i(d,s,t)\leq Z_i(d,s)$, $\mathbb{P}$-a.s. Now, for each $i\in\mathbb{N}$ define two new random variables
        \begin{equation*}
          V_i\equiv\begin{cases} 
       V_i\left(D_i,S_i,T_i\right), & 1\leq i\leq \eta, \\
      0, & \text{otherwise},  
   \end{cases}\qquad \ Z_i\equiv \begin{cases} 
       Z_i\left(D_i,S_i\right), & 1\leq i\leq \eta, \\
      0 & \text{otherwise},  
   \end{cases}   
        \end{equation*}
        and recall that $U_1,U_2,\ldots$ is an iid sequence which is independent from the queueing model. 
        Consequently, standard conditioning and un-conditioning implies that 
        \begin{align*}
        \mathbb{P}\left\{\sum_{i=1}^\eta V_i\leq\sum_{i=1}^\eta Z_i\right\}&=\mathbb{E}\mathbb{P}\left\{\sum_{i=1}^\eta  V_i\left(D_i,S_i,T_i\right)\leq\sum_{i=1}^\eta  Z_i\left(D_i,S_i\right)\bigg|\left(D_i,S_i,T_i\right)_{i=1}^\eta\right\}\nonumber\\&=\mathbb{E}1=1\,.
        \end{align*}
        In addition, we know that 
            \begin{equation*}
                \sum_{i=1}^\eta V_i\stackrel{\text{d}}{=}\sum_{i=1}^\eta\int_{I_i}g\circ W_\lambda^{(k+1)}(t;x,\Psi){\rm d}t
            \end{equation*}
        and so
        \begin{equation*}
            \sum_{i=1}^\eta\int_{I_i}g\circ W_\lambda^{(k+1)}(t;x,\Psi){\rm d}t\leq_{\normalfont \text{st}}\sum_{i=1}^\eta Z_i\,,
        \end{equation*}
        such that conditionally on $\left(D_i,S_i\right)_{i=1}^\eta$, the random variables $\left(Z_i\right)_{i=1}^\eta$ are independent. 
       
        Now, by applying Theorem 1 from the work by Lewis and Shedler \cite{Lewis1979}, one can construct a probability space with: (1) A homogeneous Poisson process with rate $\lambda_h$ denoted by $N_{\lambda_h }(\cdot)$. (2) An inhomogeneous Poisson process with rate $\lambda(\cdot)$ denoted by $N_\lambda(\cdot)$ which is a thinning of $N_{\lambda_h}(\cdot)$. (3) An $\text{M}_t/\text{G}(\Psi)/1/(k+1)+\text{H}(\Psi)$ queue whose arrival process  is $N_\lambda(\cdot)$.
        
        We shall couple that queue with an $\text{M}/\text{G}(\Psi)/1/(k+1)+\text{H}(\Psi)$ queue with an arrival rate $\lambda_h $ as follows: First, assume that during the \textit{service periods} of the initial customer, the stream of customers to the queue is determined by $N_{\lambda_h }(\cdot)$. In particular, those customers which are shared by the two processes $N_{\lambda}(\cdot)$ and $N_{\lambda_h }(\cdot)$ have the same  patience times and service times as in the original preemptive  $\text{M}_t/\text{G}(\Psi)/1/(k+1)+\text{H}(\Psi)$ system. In addition, during the \textit{waiting periods} of the initial customer, the arrivals of the customers are governed by an external independent homogeneous  Poisson process with rate $\lambda_h $. Importantly, the patience times and the service times of the customers who are arriving according to the external process are iid with joint distribution  $\Psi(\cdot)$, such that they are independent from all other random variables in the model. Observe that the memoryless property of the exponential distribution ensures that the resulting queue is  $\text{M}/\text{G}(\Psi)/1/(k+1)+\text{H}(\Psi)$ with an arrival rate $\lambda_h $ as needed.

        Importantly, in that system, for each $1\leq i\leq \eta$, the customer $c_i$ arrives when the remaining service time of the initial customer equals $S_i$, just like in the original  $\text{M}_t/\text{G}(\Psi)/1/k+\text{H}(\Psi)$ queue. Therefore, as in the original queue, he joins and adds 
        \begin{equation*}
            \int_{I_i(\lambda_h )}g\circ W_{\lambda_h }^{(k+1)}(t;x,\Psi){\rm d}t\stackrel{\normalfont \text{d}}{=}A^{(k)}_{\lambda_h }\left[g(\cdot+S_i);D_i,\Psi\left(\cdot,\cdot+S_i\right)\right]
        \end{equation*}
        to the value of the objective functional, where $I_i(\lambda_h )$ is the period in which $c_i$ is present in the  $\text{M}/\text{G}(\Psi)/1/(k+1)+\text{H}(\Psi)$ queue with an arrival rate $\lambda_h$. In particular, observe that given $\left(S_n,D_n\right)_{n=1}^\eta$, these contributions are independent. Therefore, since $g(\cdot)$ is non-negative, then taking the sum of these values contributed by $c_1,c_2,\ldots,c_\eta$, one obtaining
        \begin{align*}
        A^{(k+1)}_{\lambda_h }(g;x,\Psi)-\int_{0}^{x} g(x-t) \mathrm{d}t &\geq\sum_{i=1}^\eta\int_{I_i(\lambda_h )}g\circ W_{\lambda_h }^{(k+1)}(t;x,\Psi){\rm d}t\\&\stackrel{\text{d}}{=}\sum_{n=1}^{\eta}Z_n\geq_{\text{st}}\sum_{i=1}^\eta\int_{I_i}g\circ W_\lambda^{(k+1)}(t;x,\Psi){\rm d}t\nonumber
\end{align*}
from which the statement of Proposition \ref{prop: LCFS-PR} follows. $\blacksquare$

\subsection{Proof of Theorem \ref{thm: main1}}\label{sec: main proof}
We assume here that the queues $\left\{\text{M}_t/\text{G}(\Psi)/1/k+\text{H}(\Psi)\right\}_{k\in\overline{\mathbb{Z}}_+}$ yielding the processes $\big\{W_\lambda^{(k)}(\cdot)\big\}_{k\in\overline{\mathbb{Z}}_+}$ are coupled so all the differences between them are only due to the assumption that they have different waiting room sizes, i.e., the next conditions are satisfied:
\begin{enumerate}
    \item In all of the queues $\left\{\text{M}_t/\text{G}(\Psi)/1/k+\text{H}(\Psi)\right\}_{k\in\overline{\mathbb{Z}}_+}$, the arrival process is the same.

    \item For every $i\in\mathbb{N}$, the $i$'th arriving customer (who is not necessarily joining) has the same patience and service times in all of the queues $\left\{\text{M}_t/\text{G}(\Psi)/1/k+\text{H}(\Psi)\right\}_{k\in\overline{\mathbb{Z}}_+}$. 
\end{enumerate}
Thus, for any fixed  $u>0$, $N_\lambda\left([0,u]\right)<\infty$, $\mathbb{P}$-a.s.  and hence there exists a $\mathbb{P}$-a.s. finite random variable $K$ such that $W_\lambda^{(k)}(t)=W_\lambda^{(\infty)}(t)$ and $\tau_\lambda^{(k)}\wedge u=\tau_\lambda^{(\infty)}\wedge u$ for every $t\in[0,u]$ and $k>K$, $\mathbb{P}$-a.s. where $a\wedge b\equiv\min\{a,b\}$ for any $a,b\in\mathbb{R}$.

Let $f:[0,\infty)\rightarrow[0,\infty)$ be a nondecreasing bounded continuous function on $[0,\infty)$. In addition, assume that $g:[0,\infty)\rightarrow[0,\infty)$ is continuous and bounded.  Then, a double application of dominated convergence theorem yields:
\begin{align}\label{eq: fixed u}
    \lim_{k\uparrow\infty} \mathbb{E}f\left[ \int_{0}^{\tau_{\lambda}^{(k)} \wedge u} g \circ W_{\lambda}^{(k)}(t){\rm d}t\right] &= \lim_{k\uparrow\infty} \mathbb{E}f\left[ \int_{0}^{u} g \circ W_{\lambda}^{(k)}(t) \textbf{1}_{(t,\infty)}\left(\tau_{\lambda}^{(k)}\right) {\rm d}t\right]\nonumber
    \\
    &= \mathbb{E}f\left[ \int_{0}^{u} g \circ W_{\lambda}^{(\infty)}(t) \textbf{1}_{(t,\infty)}\left(\tau_{\lambda}^{(\infty)}\right) {\rm d}t\right]\nonumber
    \\
    &= \mathbb{E}f\left[ \int_{0}^{\tau_{\lambda}^{(\infty)} \wedge u} g \circ W_{\lambda}^{(\infty)}(t) {\rm d}t\right]\,.
\end{align}
Consequently,
\begin{align}\label{eq: bound1}
    \mathbb{E}f\left[ \int_{0}^{\tau_{\lambda}^{(\infty)}} g \circ W_{\lambda}^{(\infty)}(t) {\rm d}t\right] &\overset{(\text{I})}{=} \lim_{u\uparrow\infty} \lim_{k\uparrow\infty} \mathbb{E}f\left[ \int_{0}^{\tau_{\lambda}^{(k)} \wedge u} g \circ W_{\lambda}^{(k)}(t){\rm d}t\right]\nonumber
    \\
    &\overset{(\text{II})}{\leq} \lim_{k\uparrow\infty} \mathbb{E}f\left[ \int_{0}^{\tau_{\lambda}^{(k)}} g \circ W_{\lambda}^{(k)}(t) {\rm d}t\right]\nonumber
    \\
    &\overset{(\text{III})}{\leq} \lim_{k\uparrow\infty} \mathbb{E}f\left[ \int_{0}^{\tau_{\lambda_h }^{(k)}} g \circ W_{\lambda_h }^{(k)}(t) {\rm d}t\right]\,.
\end{align}
Notice that: (I) stems from \eqref{eq: fixed u} and an application of the monotone convergence theorem. (II) is due to the assumptions that $f(\cdot)$ is nondecreasing and $g(\cdot)$ is non-negative. (III) follows from Proposition \ref{prop: LCFS-PR} since $f(\cdot)$ is nondecreasing.

The next step is to derive an upper bound for the limit that appears on the right-hand side of \eqref{eq: bound1}. To this end, we will need the following lemma.

\begin{lemma}\label{lemma: k bound}
 Assume that $g(\cdot)$ is a non-negative Borel function, $\Psi\in\mathcal{L}$ and $k,\ell\in\overline{\mathbb{Z}}_+$ such that $k\leq\ell$. Then, $A^{(k)}_{\lambda_h }(g;x,\Psi)\leq_{\normalfont \text{st}}A^{(\ell)}_{\lambda_h }(g;x,\Psi)$.
\end{lemma}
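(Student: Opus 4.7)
The plan is to reduce the lemma to the adjacent case ($\ell=k+1$) for each $k\in\mathbb{Z}_{\geq 0}$, iterate to cover every finite $\ell\geq k$, and finally extend to $\ell=\infty$ by a limiting argument using the shared-primitives coupling described at the start of Section~\ref{sec: main proof}: under that coupling, $W^{(j)}_{\lambda_h}(t;x,\Psi)\to W^{(\infty)}_{\lambda_h}(t;x,\Psi)$, $\mathbb{P}$-a.s. on any bounded time-window as $j\uparrow\infty$, and a stochastic dominance established at every finite $j$ passes to the distributional limit. The adjacent-case claim $A^{(k)}_{\lambda_h}\leq_{\normalfont\text{st}} A^{(k+1)}_{\lambda_h}$ is then proved by induction on $k$.

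For the base case $k=0$ the $0$-queue admits no customer besides the initial one, so $A^{(0)}_{\lambda_h}(g;x,\Psi)=\int_0^x g(x-t)\,{\rm d}t$ is deterministic. In the $1$-queue, since $\tau^{(1)}_{\lambda_h}<\infty$ $\mathbb{P}$-a.s. by Proposition~\ref{prop: LCFS-PR}, the initial customer receives all of her $x$ units of service across LCFS-PR active-service intervals of total length $x$; a routine change of variable shows that the contribution of these intervals to $A^{(1)}_{\lambda_h}$ is exactly $\int_0^x g(x-t)\,{\rm d}t$, and non-negativity of $g$ together with the non-negativity of the remaining sub-busy contributions yields $A^{(0)}_{\lambda_h}\leq A^{(1)}_{\lambda_h}$, $\mathbb{P}$-a.s.

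For the inductive step, the key is to couple the $(k+1)$- and $(k+2)$-queues so that their shared skeleton is the initial customer's \emph{own-time} clock. By the Poisson time-change property, the arrivals that land during intervals in which the initial customer is alone and actively served can be described (after re-parameterizing wall-clock time by her accumulated service $u\in[0,x]$) as a single rate-$\lambda_h$ Poisson process $\tilde N$ with iid $\Psi$-marks $(\tilde S_u,\tilde Y_u)$; the mark at $u$ triggers an interruption precisely when $\tilde Y_u\geq x-u$, a criterion that involves neither the waiting-room size nor the wall-clock sub-busy dynamics. Reusing the same $\tilde N$ and marks in both queues therefore produces the same $\eta$, the same sequence $(D_i,S_i)_{i=1}^\eta$, and the same leading contribution $\int_0^x g(x-u)\,{\rm d}u$ in the decomposition from the proof of Proposition~\ref{prop: LCFS-PR},
\begin{equation*}
A^{(k+1)}_{\lambda_h}(g;x,\Psi)=\int_0^x g(x-u)\,{\rm d}u+\sum_{i=1}^\eta A^{(k)}_{\lambda_h}\!\left[g(\cdot+S_i);D_i,\Psi(\cdot,\cdot+S_i)\right],
\end{equation*}
together with its analogue whose summands are of type $A^{(k+1)}_{\lambda_h}$ for the $(k+2)$-queue. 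For the sub-busy summands I would splice in an independent rate-$\lambda_h$ Poisson process $\tilde M_i$ with iid $\Psi$-marks for each $i$ (by Poisson segmentation this is distributionally faithful on each queue), and then follow the proof of Proposition~\ref{prop: LCFS-PR}: use an auxiliary iid sequence $U_1,U_2,\ldots\sim U[0,1]$ to invert the conditional cdf's of the $i$-th sub-busy contributions given $(D_i,S_i)$, and apply the inductive hypothesis to the sub-queues (with cost $g(\cdot+S_i)$, initial workload $D_i$, and modified joint distribution $\Psi(\cdot,\cdot+S_i)$) to obtain $V_i\leq Z_i$, $\mathbb{P}$-a.s. for every $i$. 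Summing yields $A^{(k+1)}_{\lambda_h}\leq A^{(k+2)}_{\lambda_h}$, $\mathbb{P}$-a.s. under the coupling, which gives the desired stochastic dominance.

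The main obstacle is the wall-clock misalignment between the two cycles: the sub-busy-periods in the $(k+2)$-queue are stochastically longer than those in the $(k+1)$-queue, so the common wall-clock Poisson arrival process does not directly line up the interruptions of the initial customer in the two systems. The remedy is precisely the own-time/sub-busy splicing outlined above, which aligns the sub-queue comparisons with the form of the inductive hypothesis; the last technical point is to verify that this splice reconstructs the correct marginal distribution on each queue, which is a standard but slightly delicate application of the segmentation and time-change properties of Poisson processes.
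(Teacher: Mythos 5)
Your finite-$\ell$ argument (adjacent case by an own-time coupling of the initial customer, induction on $k$, then chaining) is sound but is essentially a second run of the Proposition~\ref{prop: LCFS-PR} machinery; the paper's proof is far shorter and of a different nature. It works entirely inside the \emph{single} $\text{M}/\text{G}(\Psi)/1/\ell+\text{H}(\Psi)$ queue: writing $L(t)$ for the number of customers present, it bounds $A^{(\ell)}_{\lambda_h}(g;x,\Psi)$ from below by $\int_0^{\tau^{(\ell)}_{\lambda_h}}g\circ W^{(\ell)}_{\lambda_h}(t)\,\mathbf{1}_{\{L(t)\leq k\}}\,{\rm d}t$ and observes that, because the discipline is LCFS-PR and the arrival process is homogeneous Poisson, excising the high-occupancy periods and gluing the remainder yields (by the strong Markov property and memorylessness) a path distributed like the $k$-queue over its cycle, so the lower bound is distributed exactly as $A^{(k)}_{\lambda_h}(g;x,\Psi)$. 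Crucially, that argument is uniform in $\ell$, including $\ell=\infty$.

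The genuine gap in your proposal is the extension to $\ell=\infty$. You assert that ``a stochastic dominance established at every finite $j$ passes to the distributional limit,'' but the inequality you need is $\limsup_j\mathbb{P}\{A^{(j)}_{\lambda_h}\geq s\}\leq\mathbb{P}\{A^{(\infty)}_{\lambda_h}\geq s\}$, and the shared-primitives coupling does not deliver it. That coupling only gives $W^{(j)}_{\lambda_h}=W^{(\infty)}_{\lambda_h}$ on $[0,u]$ for $j$ large, hence control of the functionals truncated at $\tau^{(j)}_{\lambda_h}\wedge u$; the full functional $A^{(j)}_{\lambda_h}$ integrates up to the random horizon $\tau^{(j)}_{\lambda_h}$, and on the event $\{\tau^{(\infty)}_{\lambda_h}=\infty\}$ (which has positive probability without a stability assumption, e.g.\ when all patience times are infinite and $\rho_h>1$) one only obtains $\liminf_j A^{(j)}_{\lambda_h}\geq A^{(\infty)}_{\lambda_h}$ $\mathbb{P}$-a.s. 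This is the \emph{wrong} direction: combined with $A^{(k)}_{\lambda_h}\leq_{\normalfont \text{st}}A^{(j)}_{\lambda_h}$ it makes both $\mathbb{P}\{A^{(k)}_{\lambda_h}>s\}$ and $\mathbb{P}\{A^{(\infty)}_{\lambda_h}>s\}$ lower bounds for $\liminf_j\mathbb{P}\{A^{(j)}_{\lambda_h}>s\}$ and yields no comparison between them. Note that $A^{(k)}_{\lambda_h}\leq_{\normalfont \text{st}}A^{(\infty)}_{\lambda_h}$ is precisely the instance of the lemma that the proof of Theorem~\ref{thm: main1} uses (to justify $\sup_k A_k\leq A_\infty$), so this case cannot be waved through; you would either need to run your own-time coupling directly against the $\infty$-queue (which requires knowing a priori that the sub-busy periods there terminate, again a stability issue), or adopt the paper's excision argument, which sidesteps the limit entirely.
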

\begin{proof}
    Consider an $\text{M}/\text{G}(\Psi)/1/\ell+\text{H}(\Psi)$ queue and for every time $t\geq0$, let $L(t)$ be the number of customers who are present in the system at that time. Observe that 
    \begin{align}\label{eq: stochastic domination k}
    A^{(\ell)}_{\lambda_h }(g;x,\Psi)&= \int_{0}^{\tau_{\lambda_h }^{(\ell)}} g \circ W_{\lambda_h }^{(\ell)}(t){\rm d}t\\&\nonumber\geq\int_{0}^{\tau_{\lambda_h }^{(\ell)}} g \circ W_{\lambda_h }^{(\ell)}(t)\textbf{1}_{\left\{L(t)\leq k\right\}}{\rm d}t\,.
    \end{align}
    Now, recall that: (1) The service discipline is LCFS-PR. (2) The arrival process is homogeneous Poisson process with rate $\lambda_h $. Then, using the strong Markov property of the workload process with the memoryless property of the exponential distribution, we deduce that the lower bound on the right-hand side of \eqref{eq: stochastic domination k} is distributed like $A^{(k)}_{\lambda_h }(g;x,\Psi)$, from which Lemma \ref{lemma: k bound} follows. 
\end{proof}\newline\newline
Lemma \ref{lemma: k bound} (jointly with the standard coupling definition of $\leq_{\normalfont \text{st}}$) implies that there exists a probability space equipped with random variables $ \left\{A_k;k\in\overline{\mathbb{Z}}_+\right\}$ such that: (1) $0\leq A_0\leq A_1\leq A_2\leq \cdots\leq A_\infty$, $\mathbb{P}$-a.s. (2) $A_k$ is distributed like $A^{(k)}_{\lambda_h }(g;x,\Psi)$ for every $k\in\overline{\mathbb{Z}}_+$. Therefore, applying the monotone convergence theorem for this sequence shows that

\begin{align*}
\lim_{k\uparrow\infty} \mathbb{E}f\left[ \int_{0}^{\tau_{\lambda_h }^{(k)}} g \circ W_{\lambda_h }^{(k)}(t) {\rm d}t\right]&=\lim_{k\uparrow\infty}\mathbb{E}A_k=\mathbb{E}\sup_{k\in\mathbb{Z}_{\geq0}}A_k\\&\nonumber\leq \mathbb{E}A_\infty=\mathbb{E}f\left[ \int_{0}^{\tau_{\lambda_h }^{(\infty)}} g \circ W_{\lambda_h }^{(\infty)}(t) {\rm d}t\right]\,.   
\end{align*}
Thus, we have just proved that
\begin{equation}\label{eq: master}
\mathbb{E}f\left[ \int_{0}^{\tau_{\lambda}^{(\infty)}} g \circ W_{\lambda}^{(\infty)}(t) {\rm d}t\right]\leq \mathbb{E}f\left[ \int_{0}^{\tau_{\lambda_h }^{(\infty)}} g \circ W_{\lambda_h }^{(\infty)}(t) {\rm d}t\right]    
\end{equation}
for any nondecreasing bounded continuous function $f:[0,\infty)\rightarrow[0,\infty)$ and any bounded continuous function $g:[0,\infty)\rightarrow[0,\infty)$.

Let $f(\cdot)$ be continuous and bounded. Let $g:[0,\infty)\rightarrow[0,\infty)$ be a function for which there is a sequence $g_n:[0,\infty)\rightarrow[0,\infty)$ of continuous bounded functions such that $g_n\uparrow g$ pointwise as $n\to\infty$. Then, \eqref{eq: master} implies that 
\begin{equation*}
    \mathbb{E}f\left[ \int_{0}^{\tau_{\lambda}^{(\infty)}} g_n \circ W_{\lambda}^{(\infty)}(t) {\rm d}t\right]\leq \mathbb{E}f\left[ \int_{0}^{\tau_{\lambda_h }^{(\infty)}} g_n \circ W_{\lambda_h }^{(\infty)}(t) {\rm d}t\right]   
\end{equation*}
for every $n\in\mathbb{N}$. Now, an application of dominated convergence (letting the limit pass inside the expectation) and also monotone convergence (letting the limit pass inside the integral) shows that \eqref{eq: master} is valid for $g(\cdot)$.

Let $g(\cdot)$ be as above. For each $s\geq0$, define $f_s(y)\equiv \textbf{1}_{[s,\infty)}(y)$ for all $y\geq0$. For each $s\geq0$ we know that there is a sequence $f_{s,n}:[0,\infty)\rightarrow[0,1]$ of nondecreasing continuous functions such that $f_{s,n}\uparrow f_s$ pointwise as $n\to\infty$. Therefore, for each $s\geq0$ and $n\in\mathbb{N}$ we have  

\begin{equation}\label{eq: inequalityyyy}
    \mathbb{E}f_{s,n}\left[ \int_{0}^{\tau_{\lambda}^{(\infty)}} g \circ W_{\lambda}^{(\infty)}(t) {\rm d}t\right]\leq \mathbb{E}f_{s,n}\left[ \int_{0}^{\tau_{\lambda_h }^{(\infty)}} g \circ W_{\lambda_h }^{(\infty)}(t) {\rm d}t\right]\,.   
\end{equation}
Then, letting $n\to\infty$ from both sides of \eqref{eq: inequalityyyy} with the help of monotone convergence yields that
\begin{equation*}
    \mathbb{P}\left\{A_\lambda(g)\geq s\right\}\leq \mathbb{P}\left\{A_{\lambda_h }(g)\geq s\right\}
\end{equation*}
for every $s\geq0$.
Consequently, since  $A_\lambda(g)$ as well as $A_{\lambda_h }(g)$ are both non-negative random variables, Theorem \ref{thm: main1} follows. $\blacksquare$

\section{Proof of Theorem \ref{thm: super stochastic bound}}\label{sec: proof super bound}
For each $i\in\mathbb{N}$, let $I_\lambda^i\subseteq[0,\infty)$ (resp., $B_\lambda^i\subseteq[0,\infty)$) be the $i$'th idle (resp., busy) period in an $\text{M}_t/\text{G}(\Psi)/1+\text{H}(\Psi)$ queue. Define a random index 
\begin{equation*}
    \iota_\lambda\equiv\inf\left\{i\in\mathbb{N};|I_\lambda^i|>\kappa\right\}
\end{equation*}
and a random time
\begin{equation*}
    \zeta_\lambda\equiv\min\left\{\kappa n\in I_\lambda^{\iota_\lambda};W_\lambda(\kappa n)=0\ , \ n\in\mathbb{N}\right\}\,.
\end{equation*}
By construction, $\zeta_\lambda \geq \xi_\lambda$ and hence
\begin{align}\label{eq: bound11}
    \mathcal{A}_\lambda(g)=\int_0^{\xi_\lambda}g\circ W_\lambda(t){\rm d}t&\leq \int_0^{\zeta_\lambda}g\circ W_\lambda(t){\rm d}t\\&\nonumber=g(0)\sum_{i=1}^{\iota_\lambda}|I_\lambda^i|+\sum_{i=1}^{\iota_\lambda-1}\int_{B_\lambda^i}g\circ W_\lambda(t){\rm d}t\\&\nonumber\leq \iota_\lambda\kappa g(0)+\sum_{i=1}^{\iota_\lambda-1}\int_{B_\lambda^i}g\circ W_\lambda(t){\rm d}t
    \equiv\overline{\mathcal{A}}_\lambda(g)\,.
\end{align}
Similarly, observe that
\begin{align}\label{eq: bound2}
   \mathcal{A}^*_\lambda(g) = \int_0^{\xi_\lambda}g\circ W_\lambda(t-){\rm d}N^*_\lambda(t)&\leq \int_0^{\zeta_\lambda}g\circ W_\lambda(t-){\rm d}N^*_\lambda(t)\\&\nonumber\leq \iota_\lambda g(0)+\sum_{i=1}^{\iota_\lambda-1}\int_{B_\lambda^i}g\circ W_\lambda(t-){\rm d}N^*_\lambda(t)
    \equiv\overline{\mathcal{A}}^*_\lambda(g)\,.
\end{align}
Therefore, in order to prove Theorem \ref{thm: super stochastic bound}, it is sufficient to prove Proposition \ref{prop: bound1} and Proposition \ref{prop: bound2} below. Since the proofs of these propositions are almost identical, we will provide only that of Proposition \ref{prop: bound1}. 
\begin{proposition}\label{prop: bound1}
    Consider a probability space equipped with the following random variables:
    \begin{enumerate}
    
    \item $\left(A^i_{\lambda_h}\right)_{i=1}^\infty$ is an iid sequence of random variables distributed like $A_{\lambda_h}(g;x,\Psi)$ with an initial state $x$ whose initial distribution is $G(\cdot)\equiv G(\Psi)(\cdot)$. 

    \item $\iota_{\lambda_h}$ is a random variable having a geometric distribution with probability of success equal to $e^{-\kappa\lambda_h}$.

    \item $\left(A^i_{\lambda_h}\right)_{i=1}^\infty$ and $\iota_{\lambda_h}$ are independent.  
    \end{enumerate}
    Then, 
    \begin{equation}\label{eq: stochastic bound}
        \mathcal{A}_\lambda(g)\leq_{\normalfont \text{st}}\overline{\mathcal{A}}_\lambda(g)\leq_{\normalfont \text{st}}\iota_{\lambda_h}  \kappa g(0)+\sum_{i=1}^{\iota_{\lambda_h}-1}A_{\lambda_h}^i\,.
    \end{equation}
\end{proposition}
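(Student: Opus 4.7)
The first inequality in \eqref{eq: stochastic bound} is the pathwise bound \eqref{eq: bound11}, so I focus on the second. A useful intuition is that the right-hand side has the same distribution as the corresponding quantity $\overline{\mathcal{A}}_{\lambda_h}(g)$ for the homogeneous $\text{M}/\text{G}(\Psi)/1+\text{H}(\Psi)$ queue of rate $\lambda_h$, because in that queue the idle-period lengths are iid $\mathrm{Exp}(\lambda_h)$ and the busy-period $g$-integrals are iid $A_{\lambda_h}(g;x,\Psi)$-distributed with $x\sim G$, with the two families mutually independent. The plan is to establish this pathwise comparison on a suitably enlarged probability space by combining two separate stochastic dominations.

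The first domination is per busy period. Let $T_i$ denote the starting epoch of the $i$-th busy period of the $\lambda$-queue and $S^{(i)}$ the service time of its initiating customer. Since a busy period opens with a joining arrival to an empty queue, $(S^{(i)})_{i\geq 1}$ is iid with distribution $G$. By the strong Markov property, conditional on $(T_i,S^{(i)})$ the $i$-th busy period is distributed as the initial busy period of an $\text{M}_{\lambda(T_i+\cdot)}/\text{G}(\Psi)/1+\text{H}(\Psi)$ queue with initial workload $S^{(i)}$; since $\lambda(T_i+\cdot)\leq\lambda_h$, Theorem~\ref{thm: main1} gives
\[
J_i\equiv\int_{B_\lambda^i}g\circ W_\lambda(t)\,{\rm d}t \;\leq_{\normalfont \text{st}}\; A_{\lambda_h}(g;S^{(i)},\Psi)\quad\text{conditional on }(T_i,S^{(i)}).
\]
Distinct busy periods are moreover conditionally independent given $(T_i,S^{(i)})_{i\geq 1}$, because they occupy disjoint time intervals and Poisson arrivals in disjoint intervals are independent. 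The second domination concerns the stopping index: conditional on the end epoch $\tau_{i-1}^{\text{end}}$ of busy period $i-1$, $\mathbb{P}(|I_\lambda^i|\leq\kappa\mid\tau_{i-1}^{\text{end}})=1-\exp\big(-\int_{\tau_{i-1}^{\text{end}}}^{\tau_{i-1}^{\text{end}}+\kappa}\lambda(s)\,{\rm d}s\big)\leq 1-e^{-\kappa\lambda_h}$, and iterating yields $\iota_\lambda\leq_{\normalfont \text{st}}\iota_{\lambda_h}$.

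The combination step proceeds via a two-stage coupling. First, generate the idle-period lengths of the $\lambda$-queue by inverse-CDF encoding driven by an iid $\mathrm{Unif}(0,1)$ sequence $(U_i)$. Using the second domination, the indicators $\tilde{H}_i\equiv\mathbf{1}\{U_i\leq e^{-\kappa\lambda_h}\}$ are iid $\mathrm{Bernoulli}(e^{-\kappa\lambda_h})$ and pathwise dominated by $\mathbf{1}\{|I_\lambda^i|>\kappa\}$, so the variable $\iota_{\lambda_h}\equiv\min\{i:\tilde{H}_i=1\}$ is $\mathrm{Geom}(e^{-\kappa\lambda_h})$-distributed with $\iota_{\lambda_h}\geq\iota_\lambda$ pathwise. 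Next, adjoin a second independent iid $\mathrm{Unif}(0,1)$ sequence $(V_i)$ and use the conditional quantile coupling (driven by $V_i$ and the conditional CDFs from the first domination) to construct $A_{\lambda_h}^i\geq J_i$ pathwise, whose conditional distribution given $(T_i,S^{(i)})$ is $A_{\lambda_h}(g;S^{(i)},\Psi)$. Combining the two couplings then yields
\[
\overline{\mathcal{A}}_\lambda(g)=\iota_\lambda\kappa g(0)+\sum_{i=1}^{\iota_\lambda-1}J_i \;\leq\; \iota_{\lambda_h}\kappa g(0)+\sum_{i=1}^{\iota_{\lambda_h}-1}A_{\lambda_h}^i \quad\mathbb{P}\text{-a.s.},
\]
by the non-negativity of $g(0)$ and the $A_{\lambda_h}^i$'s.

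The \textbf{main obstacle} is verifying that the joint distribution of $(\iota_{\lambda_h},(A_{\lambda_h}^i))$ produced by this coupling is the one required by the proposition, namely with $\iota_{\lambda_h}$ independent of an iid sequence $(A_{\lambda_h}^i)$: both objects are built from the same $\lambda$-queue sample path, so the required mutual independence is not automatic. The resolution uses two facts: (i) the conditional law of $A_{\lambda_h}^i$ given $(T_i,S^{(i)})$ is $A_{\lambda_h}(g;S^{(i)},\Psi)$, which depends only on $S^{(i)}$ (not on the shifted rate $\lambda(T_i+\cdot)$); and (ii) the sequence $(S^{(i)})$ is iid $G$ and is independent of $(U_i)$, because $S^{(i)}$ is a function of the iid service-patience marks (independent of arrival times) while $(U_i)$ is an external auxiliary sequence. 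Together these properties imply, via a direct conditioning-and-averaging computation, that the $A_{\lambda_h}^i$'s are iid with the stated marginal and jointly independent of the $(U_i)$-driven variable $\iota_{\lambda_h}$, which is the last piece needed. The proof of Proposition~\ref{prop: bound2} is identical modulo cosmetic changes.
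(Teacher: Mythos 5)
Your proposal is correct and follows essentially the same route as the paper: the paper likewise dominates $\iota_\lambda$ pathwise by a geometric $\iota_{\lambda_h}$ (via the Lewis--Shedler thinning of the idle periods rather than your conditional-probability bound, but to the same effect) and dominates each busy-period functional by an iid copy of $A_{\lambda_h}(g;x,\Psi)$ through a quantile coupling driven by external uniforms and Theorem \ref{thm: main1}, exactly as you do. The only nit is the orientation of your indicator coupling: for $\mathbf{1}\{U_i\leq e^{-\kappa\lambda_h}\}$ to be pathwise dominated by $\mathbf{1}\{|I_\lambda^i|>\kappa\}$ you must encode the idle length through the conditional \emph{survival} function (or replace $U_i$ by $1-U_i$), a trivially fixable convention.
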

\begin{proposition}\label{prop: bound2}
    Consider a probability space equipped with the following random variables:
    \begin{enumerate}
    
    \item $\left(A^{i*}_{\lambda_h}\right)_{i=1}^\infty$ is an iid sequence of random variables distributed like $A^*_{\lambda_h}(g;x,\Psi)$ with an initial state $x$ whose initial distribution is $G(\cdot)\equiv G(\Psi)(\cdot)$. 

    \item $\iota^{*}_{\lambda_h}$ is a random variable having a geometric distribution with probability of success equal to $e^{-\kappa\lambda_h}$. 

    \item  $\left(A^{i*}_{\lambda_h}\right)_{i=1}^\infty$ and $\iota^*_{\lambda_h}$ are independent.   
    \end{enumerate}
    Then, 
    \begin{equation}\label{eq: stochastic bound11}
        \mathcal{A}^*_\lambda(g)\leq_{\normalfont \text{st}}\overline{\mathcal{A}}^*_\lambda(g)\leq_{\normalfont \text{st}}\iota_{\lambda_h}^*g(0)+\sum_{i=1}^{\iota^*_{\lambda_h}-1}A_{\lambda_h}^{i*}\,.
    \end{equation}
\end{proposition}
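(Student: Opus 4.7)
The first inequality in \eqref{eq: stochastic bound11}, namely $\mathcal{A}^*_\lambda(g) \le \overline{\mathcal{A}}^*_\lambda(g)$, is already delivered by \eqref{eq: bound2}, so the task is to prove $\overline{\mathcal{A}}^*_\lambda(g) \leq_{\normalfont \text{st}} \iota^*_{\lambda_h} g(0) + \sum_{i=1}^{\iota^*_{\lambda_h}-1} A_{\lambda_h}^{i*}$. My plan is to construct, on a single enlarged probability space, a pathwise coupling that realizes simultaneously
\begin{equation*}
\iota_\lambda \le \iota^*_{\lambda_h} \text{ a.s.}, \qquad X_i^* := \int_{B_\lambda^i} g\circ W_\lambda(t-)\,\mathrm{d}N^*_\lambda(t) \le \widetilde{X}_i^* \text{ a.s.\ for every } i \ge 1,
\end{equation*}
where $\iota^*_{\lambda_h}$ and $(\widetilde{X}_i^*)_{i\ge 1}$ carry the marginal laws and independence structure prescribed in the statement. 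Once such a coupling is in hand, non-negativity of $g(0)$ and of every $\widetilde{X}_i^*$ yields the desired pathwise (hence stochastic) inequality.

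I would build the coupling one cycle at a time, using two mutually independent i.i.d.\ families of Uniform$[0,1]$ auxiliaries: $(V_i)_{i\ge 1}$ for the busy-period contributions and $(Z_i)_{i\ge 1}$ for the idle-period indicators. At the start of the $i$-th busy period, at time $T_i^{\mathrm{start}}$, the strong Markov property identifies the remainder of that excursion as an independent $\mathrm{M}_t/\mathrm{G}(\Psi)/1+\mathrm{H}(\Psi)$ queue with shifted arrival rate $\lambda(T_i^{\mathrm{start}}+\cdot) \le \lambda_h$ and initial workload $S_i$, the service time of the initiating customer, which is i.i.d.\ $\sim G$ across $i$. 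Let $F_{T_i^{\mathrm{start}},S_i}^*$ denote the conditional CDF of $X_i^*$ given $(T_i^{\mathrm{start}},S_i)$ and let $G_{S_i}^*$ denote the CDF of $A^*_{\lambda_h}(g;S_i,\Psi)$. Theorem \ref{thm: main2}, applied to the shifted excursion, yields $F_{T_i^{\mathrm{start}},S_i}^*(u) \ge G_{S_i}^*(u)$ for every $u$. Setting $X_i^* := (F_{T_i^{\mathrm{start}},S_i}^*)^{-1}(V_i)$ and $\widetilde{X}_i^* := (G_{S_i}^*)^{-1}(V_i)$, monotonicity of pseudo-inverses delivers $X_i^* \le \widetilde{X}_i^*$ a.s., while $\widetilde{X}_i^*$ acquires the desired conditional law given $S_i$.

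For the idle-period coupling, the conditional probability that the $i$-th idle period exceeds $\kappa$ equals $p_i := \exp\bigl(-\int_{T_i^{\mathrm{end}}}^{T_i^{\mathrm{end}}+\kappa}\lambda(s)\,\mathrm{d}s\bigr) \ge e^{-\kappa\lambda_h}$, since $\lambda \le \lambda_h$. Sampling the idle-period length by inverting its conditional CDF at $Z_i$, I set $\xi_i := \mathbf{1}\{|I_\lambda^i|>\kappa\} = \mathbf{1}\{Z_i > 1-p_i\}$ and $Y_i := \mathbf{1}\{Z_i > 1-e^{-\kappa\lambda_h}\}$. Because $1-p_i \le 1-e^{-\kappa\lambda_h}$, we have $Y_i \le \xi_i$, and hence $\iota^*_{\lambda_h} := \min\{i : Y_i = 1\} \ge \min\{i : \xi_i = 1\} = \iota_\lambda$. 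The i.i.d.\ Uniform property of $(Z_i)$ makes $(Y_i)$ i.i.d.\ Bernoulli$(e^{-\kappa\lambda_h})$, so that $\iota^*_{\lambda_h}$ is geometric with the required success probability.

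The principal technical point I anticipate is matching the prescribed joint law of $\bigl(\iota^*_{\lambda_h},(\widetilde{X}_i^*)\bigr)$---that the $\widetilde{X}_i^*$'s are i.i.d.\ and independent of $\iota^*_{\lambda_h}$---despite the intricate dependence of the original queue on the global history through the $T_i^{\mathrm{start}}$'s. The decisive observation is that $\widetilde{X}_i^*$ is by design a function solely of the local pair $(S_i,V_i)$: although $X_i^*$ inherits dependence on $T_i^{\mathrm{start}}$, the dominating companion is built from $G_{S_i}^*$, which depends only on $S_i$ because the rate $\lambda_h$ is time-homogeneous. Since $(S_i,V_i)_{i\ge 1}$ and $(Z_i)_{i\ge 1}$ are two mutually independent i.i.d.\ families, $(\widetilde{X}_i^*)$ is i.i.d.\ with the correct marginal law, and $\iota^*_{\lambda_h}$---measurable with respect to $\sigma((Z_i))$ alone---is automatically independent of $(\widetilde{X}_i^*)$, completing the coupling and hence the proof.
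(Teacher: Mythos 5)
Your proof is correct and follows essentially the same route as the paper's (the paper proves Proposition \ref{prop: bound1} in detail and notes that Proposition \ref{prop: bound2} is identical up to minor modifications): cycle decomposition, quantile coupling of the per-busy-period contributions via Theorem \ref{thm: main2} applied to the time-shifted queue with initial workload $\sim G$, and geometric domination of the cycle count from $\lambda\le\lambda_h$. The only (cosmetic) difference is that you dominate the idle-period exceedance indicators directly with auxiliary uniforms, whereas the paper obtains $\iota_\lambda\le\iota_{\lambda_h}$ by Lewis--Shedler thinning, bounding each idle period below by the $\mathrm{Exp}(\lambda_h)$ time to the next $N_{\lambda_h}$-arrival.
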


\subsection{Proof of Proposition \ref{prop: bound1}}
The inequality $\mathcal{A}_\lambda(g)\leq_{\normalfont \text{st}}\overline{\mathcal{A}}_\lambda(g)$ follows by construction. In addition, if $\lambda_h=0$, then $\iota_{\lambda_h}=\infty$, $\mathbb{P}$-a.s. and hence the upper bound in \eqref{eq: stochastic bound} is $\infty$, $\mathbb{P}$-a.s. and the claim follows trivially.
    
Let $\lambda_h>0$. Since $\lambda_h\geq\lambda(t)$ for every $t\geq0$, Theorem 1 from the work by Lewis and Shedler \cite{Lewis1979} implies that one can construct an inhomogeneous Poisson process  $N_{\lambda}(\cdot)$ with rate $\lambda(\cdot)$ which is a thinning of $N_{\lambda_h}(\cdot)$. For every $t\geq0$ let $D_{\lambda_h}(t)$ be the time of the first arrival according to $N_{\lambda_h}(\cdot)$ which occurs after $t$. Then, for each $i\in\mathbb{N}$, define
    \begin{equation*}
        I^i_{\lambda_h}\equiv D_{\lambda_h}\left(\inf I_\lambda^i\right)-\inf I_\lambda^i\,.
    \end{equation*}
    Note that, by construction, for every $i\in\mathbb{N}$, $\left|I_\lambda^i\right|\geq I^i_{\lambda_h}$ and hence, 
    \begin{equation*}   \iota_{\lambda_h}\equiv\inf\left\{i\in\mathbb{N};|I_{\lambda_h}^i|>\kappa\right\}\geq\iota_\lambda\ \ , \ \ \mathbb{P}\text{-a.s.}
\end{equation*}
Now, assume that $\left(U_i\right)_{i=1}^\infty$ is an iid sequence of random variables uniformly distributed on the unit interval. This sequence is independent from all other random variables which have already been introduced so far. Then, for each $i\in\mathbb{N}$, let $\upsilon^i_\lambda\equiv\inf B^i_\lambda$ and denote the service time of the customer who arrives at $\upsilon_\lambda^i$ by $\sigma_{\lambda}^i$ and note that $\left(\sigma_{\lambda}^i\right)$ are iid random variables having the distribution $G(\cdot)$. For every $(s,t)\in\mathbb{R}^2_+$, let $Q_\lambda(\cdot;s,t)$ (resp., $R_{\lambda_h}(\cdot;s)$) be the cdf of $A_{\lambda(t+\cdot)}(g;s,\Psi)$ (resp., $A_{\lambda_h}(g;s,\Psi)$) and for each $i\in\mathbb{N}$ define 
    \begin{equation*}
     Q_i(s,t)\equiv Q^{-1}_{\lambda(t+\cdot))}\left(U_i;s,\Psi\right)\ \ \left(\text{resp., } R_i(s)\equiv R^{-1}_{\lambda_h}\left(U_i;s,\Psi\right)\right).   
    \end{equation*}  
    Then,arguing as in the proof of Proposition \ref{prop: LCFS-PR}, Theorem \ref{thm: main1} implies that  
    \begin{equation*}
        Q_i\left(\sigma_\lambda^i,\upsilon_\lambda^i\right)\leq R_i\left(\sigma_\lambda^i\right)\ \ , \ \ \forall i\in\mathbb{N}\ , \  \mathbb{P}\text{-a.s.} 
    \end{equation*}
    Also, notice that $\left(R_i\left(\sigma_\lambda^i\right)\right)_{i=1}^\infty$ is an iid sequence of random variables distributed like $A_{\lambda_h}(g;x,\Psi)$ with a parameter $x$ distributed according to $G(\cdot)$. The Markov property of $W_\lambda(\cdot)$ implies that 
    \begin{equation*}
    \overline{\mathcal{A}}_\lambda(g)    \stackrel{\text{d}}{=}\iota_\lambda\kappa g(0)+\sum_{i=1}^{\iota_\lambda-1}Q\left(\sigma_\lambda^i,\upsilon_\lambda^i\right)
    \end{equation*}
    and so the result follows by setting $A_{\lambda_h}^i\equiv U\left(\sigma_\lambda^i\right)$ for each $i\in\mathbb{N}$. $\blacksquare$

\section{Proofs of Theorems \ref{thm: heavy-tail} and \ref{thm: heavy-tail11}}\label{sec: heavy tails}

Since $g(\cdot)$ is non-negative, Theorem \ref{thm: super stochastic bound} implies that
\begin{equation}\label{eq: heavy-tails-bound1}
    \mathbb{P}\left\{\mathcal{A}_\lambda(g)>u\right\}\leq\mathbb{P}\left\{\sum_{i=1}^{\iota_{\lambda_h}}\left[A_{\lambda_h}^i+g(0)\max\{\kappa,1\}\right]>u\right\}\ \ , \ \ \forall u>0\,,
\end{equation}
where the joint distribution of $\iota_{\lambda_h}$ and $\left(A^i_{\lambda_h}\right)_{i=1}^\infty$ was provided via the three conditions in the statement of Proposition \ref{prop: bound1}. In particular, the sum inside the probability which appears in the upper bound has a compound geometric distribution with probability of success $e^{-\lambda_h\kappa}$ and jumps that are distributed like $A_{\lambda_h}^1+\kappa g(0)$. 

For every $\Psi\in\mathcal{L}$, let $\Psi_{\infty}(\cdot)$ be the cdf of a bivariate random vector such that: (1) The first coordinate of the vector has the distribution $G(\Psi)(\cdot)$. (2) The second coordinate of the vector is equal to infinity $\mathbb{P}$-a.s. The next Lemma \ref{lemma: heavy-tailsss} will be useful and is easy to prove:
\begin{lemma}\label{lemma: heavy-tailsss}
For every $x>0$, $\Psi\in\mathcal{L}$ and nondecreasing function $g:[0,\infty)\rightarrow[0,\infty)$, 
\begin{equation*}
    A_{\lambda_h}(g;x,\Psi)\leq_{\normalfont\text{st}}
    A_{\lambda_h}(g;x,\Psi_\infty).
\end{equation*}
\end{lemma}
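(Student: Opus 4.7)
The plan is to prove a stronger pathwise statement by constructing an explicit coupling of the two $\text{M}/\text{G}(\Psi)/1+\text{H}(\Psi)$ and $\text{M}/\text{G}(\Psi_\infty)/1+\text{H}(\Psi_\infty)$ queues on a common probability space. Both queues share the same homogeneous Poisson arrival process $N_{\lambda_h}(\cdot)$ and the same iid sequence of service times $(S_i)_{i \geq 1}$ drawn from $G(\Psi)$. Only the patience structure differs: for the $\Psi$-queue, given $S_i$, draw $Y_i$ from the conditional law of $Y$ given $S = S_i$ under $\Psi$, so that $(S_i,Y_i)\sim\Psi$; for the $\Psi_\infty$-queue, simply set every patience equal to $+\infty$, which is consistent with $\Psi_\infty$.

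Under this coupling I claim that, almost surely,
\begin{equation*}
W_{\lambda_h}(t;x,\Psi)\leq W_{\lambda_h}(t;x,\Psi_\infty), \qquad \forall t\geq 0.
\end{equation*}
I would prove this by induction on the ordered sequence of arrival epochs $t_1<t_2<\cdots$. The base case holds at $t=0$ since both workloads equal $x$, and between consecutive arrivals both processes decrease at unit rate (truncated at $0$), so the inequality is preserved on open inter-arrival intervals. At an arrival epoch $t_i$, write $w=W_{\lambda_h}(t_i-;x,\Psi)$ and $w_\infty=W_{\lambda_h}(t_i-;x,\Psi_\infty)$ with $w\le w_\infty$ by the inductive hypothesis. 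In the $\Psi_\infty$-queue the customer always joins, so the workload jumps to $w_\infty+S_i$. In the $\Psi$-queue, either $Y_i\geq w$ and the customer joins, yielding a new workload $w+S_i\leq w_\infty+S_i$, or $Y_i<w$ and the customer balks, leaving the workload at $w\leq w_\infty\leq w_\infty+S_i$. Either way the desired inequality is maintained.

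From the pathwise domination two consequences follow immediately. First, $\tau_{\lambda_h}(x;\Psi)\leq\tau_{\lambda_h}(x;\Psi_\infty)$, because $W_{\lambda_h}(t;x,\Psi_\infty)=0$ forces $W_{\lambda_h}(t;x,\Psi)=0$. Second, since $g$ is nondecreasing and non-negative,
\begin{equation*}
A_{\lambda_h}(g;x,\Psi)=\int_0^{\tau_{\lambda_h}(x;\Psi)}g\circ W_{\lambda_h}(t;x,\Psi)\,{\rm d}t\leq\int_0^{\tau_{\lambda_h}(x;\Psi_\infty)}g\circ W_{\lambda_h}(t;x,\Psi_\infty)\,{\rm d}t=A_{\lambda_h}(g;x,\Psi_\infty),
\end{equation*}
which gives the stochastic dominance. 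There is no genuine obstacle here: the only point requiring a bit of care is verifying that the coupling correctly reproduces the marginal law of each queue (the $\Psi$-queue because $(S_i,Y_i)\sim\Psi$ by construction; the $\Psi_\infty$-queue because $S_i\sim G(\Psi)$ and $Y_i=\infty$ renders the balking condition vacuous). The rest is a transparent pathwise monotonicity argument.
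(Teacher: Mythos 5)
Your proof is correct and takes essentially the same route as the paper: couple the two queues on a common probability space with the same arrival process and the same service times, observe that in the $\Psi_\infty$-queue every customer joins so its workload pathwise dominates, and conclude via the monotonicity and non-negativity of $g$ together with $\tau_{\lambda_h}(x;\Psi)\leq\tau_{\lambda_h}(x;\Psi_\infty)$. You merely spell out the induction over arrival epochs that the paper leaves implicit.
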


\subsection{Proof of Lemma \ref{lemma: heavy-tailsss}}
    Consider an $\text{M}_t/\text{G}(\Psi)/1+\text{H}(\Psi)$ queue. In the same probability space, construct an hypothetical queue with the same arrival process and customers, in which all customers, must join. Notice that the arrival epochs of the customers as well as their service times are the same in both queues, with the only difference that in the hypothetical queue, everyone joins. Consequently, the workload process of the hypothetical queue will never be below the workload process of the original $\text{M}_t/\text{G}(\Psi)/1+\text{H}(\Psi)$ queue and the result follows from this observation. $\blacksquare$\newline\newline 
 
    

    \subsection{Proof of Theorem \ref{thm: heavy-tail} and Theorem \ref{thm: heavy-tail11} (continuation)}
Lemma \ref{lemma: heavy-tailsss} in conjunction with the inequality \eqref{eq: heavy-tails-bound1} imply that for any nondecreasing function $g:[0,\infty)\rightarrow[0,\infty)$,
\begin{equation*}
    \mathbb{P}\left\{\mathcal{A}_\lambda(g)>u\right\}\leq\mathbb{P}\left\{\sum_{i=1}^{\iota_{\lambda_h}}\left[\widetilde{A}_{\lambda_h}^i+ g(0)\max\{\kappa,1\}\right]>u\right\}\ \ , \ \ \forall u>0\,,
\end{equation*}
where:
\begin{enumerate}
    \item $\big(\widetilde{A}^i_{\lambda_h}\big)_{i=1}^\infty$ is an iid sequence of random variables distributed as $A_{\lambda_h}(g;x,\Psi_\infty)$ with an initial state $x$ which has an initial distribution  $G(\cdot)\equiv G(\Psi)(\cdot)$. 

    \item $\iota_{\lambda_h}$ is a random variable having a geometric distribution with probability of success equal to $e^{-\kappa\lambda_h}$.

    \item $\big(\widetilde{A}^i_{\lambda_h}\big)_{i=1}^\infty$ and $\iota_{\lambda_h}$ are independent.  
    \end{enumerate}
Observe that whenever $\widetilde{A}_{\lambda_h}^1$ has a subexponential distribution, $\widetilde{A}_{\lambda_h}^1+\kappa g(0)$ also has a subexponential distribution and hence Corollary 3 Embrechts, Goldie and Veraverbeke's paper \cite{Embrechts1979} implies that 
\begin{equation*}
    \lim_{u\to\infty}\frac{\mathbb{P}\left\{\sum_{i=1}^{\iota_{\lambda_h}}\left[\widetilde{A}_{\lambda_h}^i+g(0)\max\{\kappa,1\}\right]>u\right\}}{\mathbb{P}\left\{\widetilde{A}_{\lambda_h}^1>u-g(0)\max\{\kappa,1\}\right\}}=\frac{1-e^{-\kappa\lambda_h}}{e^{-\kappa\lambda_h}}\,.
\end{equation*}
Note that when $g(\cdot)$ is identically equal to one,  $\widetilde{A}_{\lambda_h}^1$ has the distribution of the length of a busy period. 

The proof of Theorem \ref{thm: heavy-tail11}, follows via an application of the same steps that made the proof of Theorem \ref{thm: heavy-tail}, up to very minor modifications. $\blacksquare$

\section{Proof of Theorem \ref{thm: eta}}\label{sec: proof eta}
Let $S_0$ be a random variable which is distributed according to $G(\cdot)$ and independent from the service times and patience times of the customers in the M/G/1+H queue with an arrival rate $\lambda_h$. 
By Theorem \ref{thm: super stochastic bound}, it will be sufficient to show that  
\begin{equation*}
\mu_{\lambda_h}\equiv\int_0^{\tau_{\lambda_h}\left(S_0\right)}{\rm d}N^*_{\lambda_h}(t)
\end{equation*}
has a finite $m$'th moment. To begin with, observe that once \eqref{eq: stability conditionnn} is valid, then there exists $\varepsilon>0$ such that 
\begin{equation}\label{eq: stability conditionn11}
    \lambda_h\int_0^\infty y{\rm d}G_\varepsilon(y)\lim_{y\uparrow\infty}\left[1-H(y)\right]=\lambda_h\left[\varepsilon+\int_0^\infty y{\rm d}G(y)\right]\lim_{y\uparrow\infty}\left[1-H(y)\right]<1,
\end{equation}
where $G_\epsilon(\cdot)\equiv G(\cdot-\varepsilon)$. In addition, notice that $G_\varepsilon(0)=0$ and 
\begin{equation*}
    \int_0^\infty y{\rm d}G_\varepsilon(y)=\varepsilon+\int_0^\infty y{\rm d}G(y)<\infty,
\end{equation*}
which means that the resulting M/$\text{G}_\varepsilon$/1+H queue is stable. 

For each $i$, let $S_i$ be the service time of the $i$'th arriving customer to the underlying M/G/1+H queue. In addition, we will refer to $S_0$ as the service time of the initial ($0$'th) customer who arrives at time $t=0$. On the same probability space, consider exactly the same queue with the only difference that now for each $i\in\mathbb{Z}_{\geq0}$, the service time of the $i$'th customer is equal to $S_i+\varepsilon$. Denote the workload process of that queue by $W_{\lambda_h,\varepsilon}(\cdot)\equiv W_{\lambda_h,\varepsilon}(\cdot;S_0+\varepsilon)$ and define
\begin{equation*}
    \tau_{\lambda_h,\varepsilon}\equiv\inf\left\{t\geq0; W_{\lambda_h,\epsilon}(t)=0\right\}\,.
\end{equation*}
The next lemma follows from the strong Markov property of the workload process. We will provide its proof in the sequel.

\begin{lemma}\label{lemma: epsilon}
    Let $g:[0,\infty)\rightarrow[0,\infty)$ be a Borel function. Then, for every $x>0$, 
    \begin{equation*}
        \int_0^{\tau_{\lambda_h}(S_0)}g\circ W_{\lambda_h}(t;S_0){\rm d}N^*_{\lambda_h}(t)\leq_{\normalfont \text{st}}\int_0^{\tau_{\lambda_h,\varepsilon}(S_0+\varepsilon)}g\circ W_{\lambda_h,\varepsilon}(t;S_0+\varepsilon){\rm d}N^*_{\lambda_h,\varepsilon}(t) \,.
    \end{equation*}
\end{lemma}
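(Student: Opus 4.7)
The plan is to realize both queues on a single probability space by sharing the underlying Poisson arrival process $N_{\lambda_h}$ and the iid sequence $(S_i, Y_i)$, with the $\varepsilon$-queue using initial workload $S_0 + \varepsilon$ and service time $S_i + \varepsilon$ for each joining customer. First I would apply the strong Markov property of the workload process at the $\mathbb{P}$-a.s.\ finite stopping time
\[
\sigma \;:=\; \inf\{t \geq 0 : W_{\lambda_h,\varepsilon}(t) = S_0\},
\]
which is finite because $W_{\lambda_h,\varepsilon}(0) = S_0 + \varepsilon$ and the workload decreases at unit rate between arrivals. Combined with the memorylessness of the Poisson process, this gives that $\bigl(W_{\lambda_h,\varepsilon}(\sigma+t)\bigr)_{t\geq 0}$ is distributed as an M/$G_\varepsilon$/$1+H$ queue started from the deterministic level $S_0$ with the same arrival rate $\lambda_h$, independent of the pre-$\sigma$ evolution.

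Since $g \geq 0$ and the effective arrival measure is nonnegative, the pre-$\sigma$ contribution to the $\varepsilon$-queue's integral can only add to it, so discarding it yields a valid lower bound equal in distribution to the integral over a full busy period of an M/$G_\varepsilon$/$1+H$ queue started at $S_0$. This reduces the statement to showing that, when both queues share the same initial workload $S_0$, arrival rate $\lambda_h$, and patience distribution $H$, replacing each service time $S_i$ by $S_i + \varepsilon$ increases the integral $\int_0^{\tau} g\circ W(t-)\,\mathrm{d}N^*(t)$ in the first-order stochastic sense. I would couple the two reduced queues through the same arrival process and the same pairs $(S_i, Y_i)$, track the workload gap $D(t) := W_{\lambda_h,\varepsilon}(t) - W_{\lambda_h}(t)$ (which starts at $0$ and grows by $\varepsilon$ at every arrival both queues accept), and then proceed inductively on the successive sub-periods terminated by common empty epochs.

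The main obstacle will be the ``disagreement'' arrival epochs $T_i$ at which $Y_i \in [W_{\lambda_h}(T_i-), W_{\lambda_h,\varepsilon}(T_i-))$: the original queue accepts while the $\varepsilon$-queue rejects, so $D$ drops by $S_i$ and can become negative, breaking any naive sample-path dominance of the workloads. To handle these events I would iterate the strong-Markov decomposition across the successive regeneration times $\sigma_k$ at which $W_{\lambda_h,\varepsilon}$ returns to the level of $W_{\lambda_h}$ or both processes are simultaneously empty, using the nonnegativity of $g$ and of the effective arrival measure to absorb intermediate terms on whichever side is temporarily ahead. An alternative route, which may be cleaner, is a generator-level argument that replaces the sample-path coupling by a comparison of the Laplace functionals of the two busy-period integrals and reduces the problem to a one-dimensional monotonicity statement via the Markovian structure. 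Either route identifies the strong Markov property, as indicated in the paper's hint, as the central tool.
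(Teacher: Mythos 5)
Your proposal correctly isolates the relevant tools (the strong Markov property, nonnegativity of $g$, a coupling of the two systems) and, to your credit, you also correctly identify the fatal obstacle in the naive coupling: once you share the arrival stream and the pairs $(S_i,Y_i)$ between the two queues, a customer with $W_{\lambda_h}(T_i-)\le Y_i< W_{\lambda_h,\varepsilon}(T_i-)$ joins the original queue but balks from the $\varepsilon$-queue, the gap $D$ can become negative, and from that point on the two systems accept different customer sets. However, neither of your proposed repairs closes this gap. ``Absorbing intermediate terms on whichever side is temporarily ahead'' is not available: since $g\ge0$ you may discard contributions only from the side you are trying to make \emph{larger} (the $\varepsilon$-queue), whereas at a disagreement epoch it is the \emph{original} queue that picks up an uncompensated term $g\left(W_{\lambda_h}(T_i-)\right)$ together with the whole sub-busy-period generated by that extra acceptance, and there is nothing on the $\varepsilon$-side to dominate it. Likewise, the regeneration times $\sigma_k$ at which the two workloads meet again need not occur before the busy periods end, and the Laplace-functional alternative is not an argument as stated. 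So the core reduction of your plan --- same initial workload, service times $S_i$ versus $S_i+\varepsilon$ --- is left unproved.

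The paper resolves this by abandoning the shared-arrival coupling altogether and instead \emph{embedding} the original busy period inside the $\varepsilon$-queue's busy period. Concretely, after each acceptance epoch $\overline{E}_{i,\varepsilon}$ of the $\varepsilon$-queue one excises the stretch of time until $W_{\lambda_h,\varepsilon}$ has decreased by $\varepsilon$ relative to its post-jump value (with an analogous initial stretch accounting for the extra $\varepsilon$ in the initial workload); by the strong Markov property of the workload process and the memorylessness of the Poisson input, the restriction of $W_{\lambda_h,\varepsilon}$ to the complementary random time set $\mathcal{T}_{\lambda_h,\varepsilon}$ is distributed exactly as $\left\{W_{\lambda_h}(t;S_0):0\le t\le\tau_{\lambda_h}(S_0)\right\}$. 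In this construction there are no disagreement epochs to control --- the embedded process \emph{is} a copy of the original queue by construction, and the customers arriving during the excised stretches are surplus customers of the $\varepsilon$-queue whose contributions are simply thrown away using $g\ge0$. If you want to salvage your outline, replace your step 2 by this excision/time-change construction; the first step of your plan (discarding the pre-$\sigma$ contribution) is essentially the initial instance of it.
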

Lemma \ref{lemma: epsilon} implies that it is enough to show that
\begin{equation*}
    \mu_{\lambda_h,\varepsilon}\equiv\int_0^{\tau_{\lambda_h,\varepsilon}}{\rm d}N^*_{\lambda_h,\varepsilon}(t)
\end{equation*}
has a finite $m$'th moment. For that purpose, observe that
\begin{equation*}
0\leq \varepsilon\mu_{\lambda_h,\varepsilon}\leq \tau_{\lambda_h,\varepsilon}\,.
\end{equation*}
Therefore, if $m=1$ the claim follows since $\mathbb{E}\tau_{\lambda_h,\varepsilon}<\infty$, as explained in Application 3, in Section \ref{subsec: applications}. 

If $m\geq2$ and also $\int_0^\infty y^m{\rm d}G(y)<\infty$, then $\tau_{\lambda_h,\varepsilon}(x)$ has a finite $m$'th moment. This result is stated in the next lemma whose proof will be provided in the sequel. 
\begin{lemma}\label{lemma: moments}
    Fix $m\in\mathbb{N}_{\geq2}$. In addition, assume that \eqref{eq: stability conditionnn} is valid and $\int_0^\infty y^m{\rm d}G(y)<\infty$. Then, $\mathbb{E}\tau^m_{\lambda_h}(S_0)<\infty$. 
\end{lemma}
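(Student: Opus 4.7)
I decompose the busy period at a carefully chosen high threshold $L$ and dominate each piece by a busy period of a stable standard M/G/1 queue, whose $m$-th moment is controlled by $\int_0^{\infty} y^m \mathrm{d}G(y)$ through classical Tak\'acs-type bounds. By the stability condition \eqref{eq: stability conditionnn}, the limit $p := \lim_{y \uparrow \infty}[1-H(y)]$ satisfies $\lambda_h \int y \, \mathrm{d}G(y) \cdot p < 1$. Right-continuity of $1-H$ at infinity then yields an $L > 0$ with
\[
\rho_L \;:=\; \lambda_h (1 - H(L)) \int_0^\infty y \, \mathrm{d}G(y) \;<\; 1.
\]
On $[0,\tau_{\lambda_h}(S_0)]$ I partition the workload trajectory into alternating excursions $B_1, A_1, B_2, A_2, \ldots, B_K$, where each $B_j$ stays in $[0,L]$ and each $A_j$ stays in $(L,\infty)$; the busy period terminates at the end of the first $B_j$ that drains to zero.

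For each above-$L$ excursion $A_j$, any arrival encountered there has prospective waiting time at least $L$ and thus joins with probability at most $1-H(L)$. A Lewis--Shedler thinning then couples $A_j$ pathwise with the busy period of an auxiliary fully patient M/G/1 queue of arrival rate $\lambda_h(1-H(L))$ and service law $G$, started from the entry workload of $A_j$ minus $L$; since $\rho_L < 1$ and $\int y^m \mathrm{d}G < \infty$, this auxiliary busy period has finite $m$-th moment by classical M/G/1 theory. For each below-$L$ excursion $B_j$, the workload stays in $[0,L]$, so any joining arrival with $S > L$ terminates $B_j$; under product form this happens at rate at least $\lambda_h \mathbb{P}(S > L)(1-H(L)) > 0$, giving $B_j$ an exponentially light tail and hence moments of every order. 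Moreover, each $B_j$ ends the whole busy period (rather than launching a new $A_j$) with probability at least $e^{-\lambda_h L} > 0$, corresponding to the event of no arrival during the deterministic time $L$ required for the workload to drain, so $K$ is geometrically dominated and has all finite moments.

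Assembling these three ingredients via standard moment bounds for random sums of random variables yields $\mathbb{E}\tau_{\lambda_h}(S_0)^m < \infty$, after conditioning on $S_0$ and using $\int y^m \mathrm{d}G(y) < \infty$. The main obstacle is making the pathwise coupling of each $A_j$ with the auxiliary stable M/G/1 queue rigorous across successive excursions: this requires invoking the strong Markov property of the workload process at the upcrossings of level $L$, together with Theorem \ref{thm: main1} applied inside each $A_j$ with the shifted patience cdf $y \mapsto H(y+L)$, so as to reduce the above-$L$ excursion to a fully patient queue that can be handled by Lemma \ref{lemma: heavy-tailsss}.
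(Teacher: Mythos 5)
Your route is genuinely different from the paper's. The paper samples the workload at unit times to get a discrete-time Markov chain, proves a drift inequality (Lemma \ref{lemma: rate}), and then invokes Foster--Lyapunov-type results (Propositions 7.12(i) and 7.16(ii) of Bena\"im and Hurth) with the linear Lyapunov function $\psi(x)=1+x/\delta$; the hypothesis $\int_0^\infty y^m{\rm d}G(y)<\infty$ enters only through the uniform bound on $\mathbb{E}\left|\psi(W_1(x))-\psi(x)\right|^m$ via a compound Poisson domination. You instead decompose the busy period into excursions above and below a level $L$ with $\lambda_h(1-H(L))\int y\,{\rm d}G(y)<1$ and dominate the above-$L$ excursions by busy periods of a stable, fully patient M/G/1 queue. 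This is a legitimate alternative (it is the same ``everyone above the threshold joins independently'' coupling that underlies Lemma \ref{lemma: heavy-tailsss}), and it trades the external Bena\"im--Hurth machinery for the classical fact that a stable M/G/1 busy period has a finite $m$-th moment iff the service time does. Note that Theorem \ref{thm: main1} is not the right tool for the patience shift --- it compares arrival rates, not patience distributions; what you actually need is the direct subset/thinning coupling, which works here because product form makes the event $\{Y_i>L\}$ independent of $S_i$.

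Two steps need repair. First, your tail bound for the below-$L$ excursions rests on the rate $\lambda_h\mathbb{P}(S>L)(1-H(L))$ being positive, which fails whenever $G$ has bounded support (and $L$ exceeds it); the correct argument is the one you already use for $K$: in any window of length $L$ during which no arrivals occur, the workload drains to zero, so $|B_j|\leq_{\text{st}}L\cdot\text{Geom}(e^{-\lambda_h L})$. Second, the initial workload of the dominating busy period for $A_j$ is the overshoot $w+S-L$ of the triggering jump, and \emph{conditional on triggering an up-crossing} the service time $S$ is size-biased towards large values, so its law is not $G$; the pointwise bound $w+S-L\leq S$ survives, but turning it into a moment bound for $\sum_j|A_j|$ requires summing $\mathbb{E}[BP(S_i)^m\mathbf{1}\{i\text{ triggers}\}]$ over all arrivals and a Wald/stopped-sum argument (together with the dependence between $K$ and the excursion lengths), not an off-the-shelf random-sum inequality for an independent geometric number of iid terms. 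Both issues are fixable with standard, if tedious, arguments, so I would classify the proposal as a viable alternative proof whose assembly step is underspecified rather than wrong.
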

\subsection{Proof of Lemma \ref{lemma: epsilon}}
Consider the following recursive construction: $E_{0,\varepsilon}\equiv 0$ and for every $i\in\mathbb{N}$ define
\begin{align*}
&\overline{E}_{i,\varepsilon}\equiv\inf\left\{t\geq E_{i-1,\varepsilon};W_{\lambda_h,\varepsilon}(t)-W_{\lambda_h,\varepsilon}(t-)>0\right\}\,,\\& E_{i,\varepsilon}\equiv\inf\left\{t\geq \overline{E}_{i,\varepsilon};W_{\lambda_h,\varepsilon}\left(t\right)=W_{\lambda_h,\varepsilon}\left(\overline{E}_{i,\varepsilon}\right)-\varepsilon\right\}\,.\nonumber
\end{align*}
In words, $\overline{E}_{i, \varepsilon}$ is the arrival time of the $i$-th customer who joins the M/$\text{G}_\varepsilon$/1+H queue, and $E_{i+1, \varepsilon}$ is the first moment after $\overline{E}_{i, \varepsilon}$ when the workload in that queue decreases by $\varepsilon$ compared to its value at  $\overline{E}_{i, \varepsilon}$. Note that the choice of $\varepsilon$ is such that the M/$\text{G}_\varepsilon$/1+H queue is stable, i.e., $E_{i,\varepsilon}<\infty$ for every $i\in\mathbb{Z}_{\geq0}$.
Also, observe that by the strong Markov property of the workload process in the M/$\text{G}_\varepsilon$/1+H  queue,
\begin{equation*}
    \left\{W_{\lambda_h}(t;S_0);0\leq t\leq\tau_{\lambda_h}(S_0)\right\}\stackrel{\text{d}}{=} \left\{W_{\lambda_h,\epsilon}(t);t\in\mathcal{T}_{\lambda_h,\varepsilon}\right\}\,,
\end{equation*}
where
\begin{equation*}
    \mathcal{T}_{\lambda_h,\varepsilon}\equiv\left[0,\tau_{\lambda_h}(S_0)\right]\setminus\bigcup_{i=0}^\infty\left(E_{i,\varepsilon},\overline{E}_{i,\varepsilon}\right]\,.
\end{equation*}
Therefore, since $g(\cdot)$ is non-negative,  deduce that
\begin{align*}
\int_0^{\tau_{\lambda_h,\varepsilon}}g\circ W_{\lambda_h,\varepsilon}(t){\rm d}N^*_{\lambda_h}(t)&\geq \int_{\mathcal{T}_{\lambda_h,\varepsilon}}g\circ W_{\lambda_h,\varepsilon}(t){\rm d}N^*_{\lambda_h}(t)\\&\stackrel{\text{d}}{=}\int_0^{\tau_{\lambda_h}(S_0)}g\circ W_{\lambda_h}(t;S_0){\rm d}N^*_{\lambda_h}(t)\,,\nonumber
\end{align*}
from which Lemma \ref{lemma: epsilon} follows. $\blacksquare$

\subsection{The proof of Lemma \ref{lemma: moments}}
For $x\in[0,\infty)$ and $n\in\mathbb{Z}_{\geq0}$ denote $W_n\equiv W_{\lambda_h}(\kappa n;S_0)$ and $W_n(x)\equiv W_{\lambda_h}(\kappa n;x)$ . Notice that the sequence $\left(W_n\right)_{n=0}^\infty$ (resp., $\left(W_n(x)\right)_{n=0}^\infty$) is a homogeneous Markov chain with an initial state $S_0$ (resp., $x$) and state-space $[0,\infty)$. The next lemma will prove  helpful in the sequel where we provide the proof of Lemma \ref{lemma: moments}.
\begin{lemma}\label{lemma: rate}
For every $x>1$, 
\begin{equation*}
    \mathbb{E}W_1(x)-x+1\leq \lambda_h\int_0^\infty y{\rm d} G(y)\left[1-H(x-1)\right]\,.
\end{equation*}
\end{lemma}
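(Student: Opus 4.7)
The plan is to exploit the fact that, with initial workload $x>1$, the workload process $W_{\lambda_h}(\cdot;x)$ stays strictly above $x-1$ throughout $[0,1]$, so the server is continuously busy there. This gives the clean decomposition $W_{\lambda_h}(1;x)=x-1+\Sigma$, where $\Sigma$ is the aggregate service time of the customers who actually joined during the unit window. Campbell's formula then reduces $\mathbb{E}\Sigma$ to an integral whose integrand is directly controlled by the pathwise workload estimate.

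Concretely, the first step is pathwise: since arrivals only add to the workload while the server depletes it at unit rate whenever it is busy, comparing with the no-arrival trajectory yields
\begin{equation*}
W_{\lambda_h}(t-;x)\geq x-t\geq x-1>0\,,\qquad\forall\,t\in[0,1]\,,
\end{equation*}
so that the server never idles on $[0,1]$ and
\begin{equation*}
W_{\lambda_h}(1;x)=x-1+\Sigma\,,\qquad \Sigma\equiv\sum_{i:\,T_i\in[0,1]}S_i\,\mathbf{1}\bigl\{Y_i\geq W_{\lambda_h}(T_i-;x)\bigr\}\,,
\end{equation*}
where $T_i$ denotes the epoch of the $i$-th Poisson arrival. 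Taking expectations gives $\mathbb{E}W_{\lambda_h}(1;x)-x+1=\mathbb{E}\Sigma$. The second step is Campbell's formula: since each mark $(S_i,Y_i)\sim\Psi$ is independent of the Poisson stream and of the history strictly before $T_i$, one obtains
\begin{equation*}
\mathbb{E}\Sigma=\lambda_h\int_0^1\mathbb{E}\bigl[S\,\mathbf{1}\{Y\geq W_{\lambda_h}(t-;x)\}\bigr]\,\mathrm{d}t\,,
\end{equation*}
with $(S,Y)\sim\Psi$ independent of the queue dynamics.

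The third and final step is to insert the pathwise lower bound into the indicator, namely $\mathbf{1}\{Y\geq W_{\lambda_h}(t-;x)\}\leq\mathbf{1}\{Y\geq x-1\}$ on $[0,1]$, and then invoke the product-form assumption on $\Psi$ in force throughout Section \ref{sec: proof eta} (so that $S\perp Y$) to obtain
\begin{equation*}
\mathbb{E}\Sigma\leq\lambda_h\int_0^1\mathbb{E}[S]\cdot\mathbb{P}(Y\geq x-1)\,\mathrm{d}t=\lambda_h\int_0^\infty y\,\mathrm{d}G(y)\cdot[1-H(x-1)]\,,
\end{equation*}
which is exactly the claimed bound. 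I expect the chief subtlety to be not analytical but notational: the clean representation $W_{\lambda_h}(1;x)=x-1+\Sigma$ pertains to the time horizon $[0,1]$, which in the Markov-chain notation $W_1(x)=W_{\lambda_h}(\kappa;x)$ corresponds to the choice $\kappa=1$ that is the one relevant for Lemma \ref{lemma: moments}. Beyond that, the only care required is to ensure that Campbell's formula is applied to the underlying (unthinned) Poisson process of rate $\lambda_h$ and that the dependence of the joining indicator on $Y_i$ is handled using the marks' independence from the pre-$T_i$ history, which is standard.
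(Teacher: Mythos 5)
Your proof is correct, but it follows a genuinely different route from the paper's. Both arguments open identically: since $W_{\lambda_h}(t-;x)\geq x-t>0$ on $[0,1]$ the server never idles, so $W_1(x)-x+1$ equals the total service requirement brought by the customers who join during $[0,1]$. From there the paper does \emph{not} use Campbell/Mecke; it constructs an auxiliary queue in which the $i$'th customer joins iff $H_i>x-1$, observes that the set of joiners of the original queue is contained in that of the auxiliary queue, and then applies Wald's identity to the auxiliary queue (where the joining decision is independent of the service times under the product-form assumption), computing the mean number of auxiliary joiners as a Poisson thinning, $\lambda_h[1-H(x-1)]$. Your Palm-calculus computation replaces the coupling-plus-Wald step with the refined Campbell formula $\mathbb{E}\Sigma=\lambda_h\int_0^1\mathbb{E}\bigl[S\,\mathbf{1}\{Y\geq W_{\lambda_h}(t-;x)\}\bigr]\,\mathrm{d}t$ (valid because the left limit $W(t-)$ is unaffected by adding a point at $t$), and then bounds the indicator pathwise; this is more compact, while the paper's version is more elementary and makes the thinning structure explicit. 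Both proofs rely on $S\perp Y$ at the final step, consistent with the product-form hypothesis of Theorem \ref{thm: eta}. One small refinement you should make: for $t<1$ you actually have $W_{\lambda_h}(t-;x)\geq x-t>x-1$, so you may replace $\mathbf{1}\{Y\geq x-1\}$ by $\mathbf{1}\{Y>x-1\}$ (the point $t=1$ is Lebesgue-null), yielding exactly $1-H(x-1)$ rather than $\mathbb{P}\{Y\geq x-1\}=1-H((x-1)^-)$, which could be strictly larger if $H$ has an atom at $x-1$.
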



\begin{proof}
     Let $\eta_{\lambda_h}^*([0,1];x)$ be the number of jumps of the process $W_{\lambda_h}(\cdot;x)$ on the time interval $[0,1]$. In addition, for every $i\in\mathbb{N}$, let $S_i$ denote the service time of the $i$'th customer. Note that
     \begin{equation*}
        W_1(x)-x+1=\sum_{i=1}^{\eta_{\lambda_h}^*\left([0,1];x\right)} S_i\,. 
        \end{equation*}
        For each $i\in\mathbb{N}$, let $D_i$ (resp., $H_i$) be the inter-arrival time (resp., patience time) of the $i$'th customer. Also, let $D_0$ be the arrival time of the first customer. 
        
        Now, consider an additional queue defined in the same probability space as the original M/G/1+H queue. This new queue has the same initial workload and arrival process  as in the original queue. Furthermore, the customers in the new queue have the same patience times as well as service times as in the original queue. The difference is that in the new queue, for every $i\in\mathbb{N}$, the $i$'th customer joins iff $H_i>x-1$. Since the workload process in the original queue is not lower than $x-1$ during the time interval $[0,1]$, any customer who joins the original queue joins also the new queue. That is, if $\eta_{\lambda_h}^{**}\left([0,1];x\right)$ denotes the number of customers who join the new queue during the time interval $[0,1]$, then the inequality\begin{equation*}
        \eta_{\lambda_h}^*\left([0,1];x\right)\leq \eta_{\lambda_h}^{**}\left([0,1];x\right)
        \end{equation*}  holds pointwise. Consequently, since the service times are $\mathbb{P}$-a.s. non-negative, 

        \begin{equation*}
        \sum_{i=1}^{\eta_{\lambda_h}^*\left([0,1];x\right)} S_i\leq \sum_{i=1}^{\eta_{\lambda_h}^{**}\left([0,1];x\right)} S_i, \ \ \mathbb{P}\text{-a.s.}
        \end{equation*}
        In fact, the decision of the customers whether to join the new queue is invariant with respect to the sequence $\left(S_i\right)_{i=1}^\infty$ and hence Wald's identity implies that
        \begin{equation*}
        \mathbb{E}\sum_{i=1}^{\eta_{\lambda_h}^*\left([0,1];x\right)} S_i\leq \mathbb{E}\eta_{\lambda_h}^{**}\left([0,1];x\right)\int_0^\infty y{\rm d}G(y).
        \end{equation*}
        Note also that the decision of the customers whether to join  the new queue is determined solely by their patience time (independently of the workload process). Hence, the arrival process to the new queue is just a standard thinning of the arrival process which is a homogeneous Poisson process with rate $\lambda_h$. Therefore,
        \begin{equation*}
           \mathbb{E}\eta_{\lambda_h}^{**}\left([0,1];x\right)=\lambda_h\left[1-H(x-1)\right]\,, 
        \end{equation*}
        and the result follows. 
\end{proof}
\newline\newline
Now, we proceed with the proof of Lemma \ref{lemma: moments}. The stability condition \eqref{eq: stability conditionnn} is valid and hence we shall set $k\in(1,\infty)$ and $\delta\in(0,1)$ such that
\begin{equation*}
    \lambda_h\left[1-H(k-1)\right]\int_0^\infty y{\rm d}G(y)<1-\delta\,.
\end{equation*}
Denote $\mathcal{C}\equiv [0,k]$ and define the Lyapunov function
\begin{equation*}
    \psi(x)\equiv1+\frac{x}{\delta}\ \ , \ \ \forall x\geq0\,.
\end{equation*}
Notice that for every $x\in[0,\infty)$
\begin{equation}\label{eq: step 1}
    \mathbb{E}\psi\left(W_1(x)\right)\leq 1+\frac{x+\lambda_h\int_0^\infty y{\rm d}G(y)}{\delta(x_0)}<\infty\,.
\end{equation}
In addition, Lemma \ref{lemma: rate} implies that for every $x\in(1,\infty)$,  we have 
\begin{align}\label{eq: step 2}
    \mathbb{E}\psi\left(W_1(x)\right)-\psi\left(x\right)&\leq\frac{1}{\delta}\left\{\lambda_h\int_0^\infty y{\rm d}G(y)\left[1-H\left(k\right)\right]-1\right\}\\&\leq \frac{1-\delta-1}{\delta}=-1\,.\nonumber
\end{align}
Also, for $n\in\mathbb{N}$ let $L_n(x)$ (resp., $T_n(x)$) be the amount of input (resp., output) that was accumulated (resp., released) during the time interval $(n-1,n]$ when the initial state is $x$. Observe that for every $x\geq0$ we get
\begin{equation*}
\left|\psi\left(W_1(x)\right)-\psi\left(x\right)\right|\leq \frac{|L_1(x)|+|T_1(x)|}{\delta}= \frac{L_1(x)+1}{\delta}\,.
\end{equation*}
Note that $L_1(x)$ is uniformly bounded (in $x$) from above by the input when all customers ought to join the queue. This has a compound Poisson distribution with rate $\lambda_h$ and jump distribution $G(\cdot)$. It is known that this compound Poisson distribution has a finite $m$'th moment whenever $\int_0^\infty y^m{\rm d}G(y)<\infty$. Consequently,  
\begin{equation*}
    \label{eq: step 3}
\sup_{x\in[0,\infty)}\mathbb{E}\left|\psi\left(W_1(x)\right)-\psi\left(x\right)\right|^m<\infty\,.
\end{equation*}
Define the hitting time of the sequence $\left(W_n(x)\right)_{n=0}^\infty$ in the subset of states $\mathcal{C}$ by
\begin{equation*}
\tau_{\mathcal{C}}(x)\equiv\tau^{(1)}_{\mathcal{C}}(x)\equiv\inf\left\{n\in\mathbb{Z}_{\geq0};W_n(x)\in\mathcal{C}\right\}\,.    
\end{equation*}
According to Proposition 7.12\textbf{(i)} from the book by Bena\"im and Hurth \cite{Benaim2022}, since 
\eqref{eq: step 1}, \eqref{eq: step 2} and \eqref{eq: step 3}
hold, there exists $\alpha>0$ (uniform in $x$) such that for every $x\in\mathcal{C}$:
\begin{equation}\label{eq: recurrence}
    \mathbb{E}\tau_{\mathcal{C}}^m(x)\leq \alpha\left[1+\psi^m(x)\right]\leq \alpha\left[1+\left(1+\frac{k}{\delta}\right)^m\right]\,.
\end{equation}
Note that the upper bound in \eqref{eq: recurrence} is uniform in $x$ on $\mathcal{C}$.

In addition, define the successive return times of the sequence $\left(W_n(x)\right)_{n=0}^\infty$ in the subset of states $\mathcal{C}$, i.e., for each $n\in\mathbb{N}$ denote

\begin{equation*}
\tau^{(n+1)}_{\mathcal{C}}(x)\equiv \inf\left\{n>\tau^{(n)}_{\mathcal{C}}(x);W_n(x)\in\mathcal{C}\right\}\,. \end{equation*}
Then, for each $n\in\mathbb{Z}_{\geq0}$, let $Y_n(x)\equiv W_{\tau^{(n)}_{\mathcal{C}}(x)}(x)$ and notice that for every $x\in\mathcal{C}$ we have:
\begin{equation*}
\mathbb{P}\left\{\exists 1\leq i\leq k; Y_i=0\right\}\geq \mathbb{P}\left\{\text{No arrivals during} \ \left[0,k\right] \right\} \geq e^{-\lambda_hk}\,.
\end{equation*}
Thus, since $0\in\mathcal{C}$, Proposition 7.16\textbf{(ii)} in Bena\"im and Hurth's book \cite{Benaim2022} implies that $\mathbb{E}\xi_{\lambda_h}^m<\infty$. Lastly, $\mathbb{E}\tau_{\lambda_h}^m(S_0)<\infty$ since with positive probability, the time interval $[0,\xi_{\lambda_h}]$ contains at least one busy period whose length distribution is the same as the distribution of $\tau_{\lambda_h}(S_0)$ .  $\blacksquare$

\section{Discussion and future research}\label{sec: discussion}
Some examples of open problems that arise from the current research are discussed in the rest of this section.

\subsection{Stochastic monotonicity:}\label{subsec: stochastic monotonicity}
Here we formulate:
\begin{conjecture}
If $0\leq \lambda_1(t)\leq \lambda_2(t),\forall t\in[0,\infty)$, then
\begin{equation*}
  A_{\lambda_1}(g;x,\Psi)\leq_{\normalfont \text{st}}A_{\lambda_2}(g;x,\Psi), \ \forall x>0, \ g\in\mathcal{G}, \ \Psi\in\mathcal{L}\,.
\end{equation*}  
\end{conjecture}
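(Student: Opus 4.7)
The plan is to retrace the proof of Theorem~\ref{thm: main1}, reading $\lambda_1$ in place of $\lambda$ and $\lambda_2$ in place of $\lambda_h$ throughout. The overall architecture --- bounding the FCFS workload integral via $\text{M}_t/\text{G}(\Psi)/1/k+\text{H}(\Psi)$ approximations and letting $k\to\infty$ --- stays intact, but two auxiliary results, both proved for a \emph{constant} upper rate $\lambda_h$, must be extended: Proposition~\ref{prop: LCFS-PR} and Lemma~\ref{lemma: k bound}.

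The extension of Proposition~\ref{prop: LCFS-PR}, namely $A^{(k)}_{\lambda_1}(g;x,\Psi)\leq_{\normalfont \text{st}}A^{(k)}_{\lambda_2}(g;x,\Psi)$ for every $k\in\mathbb{Z}_{\geq 0}$, is the more routine one. I would run exactly the same induction on $k$ as in Section~\ref{sec: proposition}. In the induction step the contribution of the excursion initiated by the $i$'th interrupter at time $T_i$ is distributed in the $\lambda_1$-queue as $A^{(k)}_{\lambda_1(T_i+\cdot)}[g(\cdot+S_i);D_i,\Psi(\cdot,\cdot+S_i)]$, and since $\lambda_1(T_i+s)\leq\lambda_2(T_i+s)$ pointwise in $s\geq 0$, the induction hypothesis applied with the \emph{shifted} rate pair produces the required pseudo-inverse bound $V_i\leq Z_i$. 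The coupling of the two queues is built by a Lewis--Shedler thinning $N_{\lambda_1}\subseteq N_{\lambda_2}$ during the service periods of the initial customer in the $\lambda_2$-queue, together with an independent auxiliary inhomogeneous Poisson process of rate $\lambda_2$ used during its waiting periods; the memoryless identity that was exploited in the original proof is replaced by the strong Markov property of inhomogeneous Poisson processes, which ensures that at every stopping time the future arrivals form an independent inhomogeneous Poisson with the correctly translated rate, so that the patchwork arrivals are still an inhomogeneous Poisson process of rate $\lambda_2$.

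The main obstacle is the extension of Lemma~\ref{lemma: k bound} to inhomogeneous $\lambda_2$. In the original proof, memorylessness identifies the sub-process of the $\ell$-queue restricted to $\{L(t)\leq k\}$ with the $k$-queue in distribution; for non-constant $\lambda_2$, the random gaps on which the indicator vanishes change the rate seen by the restricted sub-process in real time, so this identification breaks. My plan is to bypass Lemma~\ref{lemma: k bound} entirely and instead establish, for every $u>0$ and $k\in\mathbb{Z}_{\geq 0}$,
\begin{equation*}
\int_0^{\tau^{(k)}_{\lambda_1}\wedge u}g\circ W^{(k)}_{\lambda_1}(t)\,{\rm d}t\leq_{\normalfont \text{st}}\int_0^{\tau^{(k)}_{\lambda_2}\wedge u}g\circ W^{(k)}_{\lambda_2}(t)\,{\rm d}t,
\end{equation*}
by re-running the induction of the extended Proposition~\ref{prop: LCFS-PR} for the time-dependent cost $\tilde g(t,w)\equiv g(w)\mathbf{1}_{[0,u]}(t)$; in the induction step this amounts to truncating each excursion at the residual time $u-T_i$. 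Once this truncated dominance is in hand, the coupling of $\{W^{(k)}_{\lambda_i}\}_{k\in\overline{\mathbb{Z}}_+}$ from Section~\ref{sec: main proof}, under which on any finite window $[0,u]$ only finitely many customers arrive and hence $W^{(k)}_{\lambda_i}(t)=W^{(\infty)}_{\lambda_i}(t)$ for $k$ sufficiently large, lets one pass to the limit $k\to\infty$ by dominated convergence (against bounded continuous $f$ and $g$). Sending $u\to\infty$ by monotone convergence with $f$ nondecreasing, and then performing the standard approximations from bounded continuous $g$ to $g\in\mathcal{G}$ and from continuous nondecreasing $f$ to $f\equiv\mathbf{1}_{[s,\infty)}$ exactly as in Section~\ref{sec: main proof}, would deliver the conjecture.

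The hardest piece of bookkeeping is the time-dependent version of Proposition~\ref{prop: LCFS-PR}: the time-indicator truncation interacts with the recursive excursion decomposition in a way that is not immediate --- an excursion starting close to the cutoff $u$ is only partially counted, so the inductive hypothesis must itself be stated for time-truncated \emph{and} workload-shifted costs, and one must verify that the Lewis--Shedler/auxiliary-Poisson coupling continues to give the correct pseudo-inverse dominance under this modification. Once that bookkeeping is settled, the rest of the argument is routine.
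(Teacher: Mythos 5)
This statement appears in the paper only as an open \emph{conjecture} (Section \ref{subsec: stochastic monotonicity}); the authors prove it solely when $\lambda_2(\cdot)$ is constant (Theorem \ref{thm: main1}) and explicitly describe why their coupling does not extend to a time-varying upper rate. Your proposal founders on exactly that obstruction. In the induction step of Proposition \ref{prop: LCFS-PR}, the dominating queue is built by excising the waiting periods of the initial customer and splicing in excursions fed by auxiliary arrival streams; for the resulting patchwork to be a Poisson process with rate $\lambda_2(\cdot)$ \emph{in real time}, the arrivals seen on the initial customer's residual-service clock must have a law that is invariant under deleting the random, history-dependent waiting periods. For constant $\lambda_h$ this holds by stationarity of increments together with the strong Markov property. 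For time-varying $\lambda_2$ it fails: conditionally on the past up to a stopping time $T$, the future of an inhomogeneous Poisson process is Poisson with rate $\lambda_2(T+\cdot)$, i.e.\ the original rate function read onward from the random time $T$; it is not a piece that can be translated backwards and glued to the pre-$T$ segment so as to reconstitute a rate-$\lambda_2(\cdot)$ process on the compressed timeline. Equivalently: the customer $c_i$ who interrupts when the initial customer's residual service equals $S_i$ arrives at real time $T_i$ in the $\lambda_1$-queue but at a \emph{different} real time $T_i'$ in the coupled $\lambda_2$-queue, because the preceding excursion lengths differ between the two systems; the excursion he initiates there is therefore distributed as $A^{(k)}_{\lambda_2(T_i'+\cdot)}[\cdots]$, whereas your induction hypothesis compares $\lambda_1(T_i+\cdot)$ with $\lambda_2(T_i+\cdot)$. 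The pointwise inequality $\lambda_1(T_i+s)\leq\lambda_2(T_i+s)$ says nothing about $\lambda_2(T_i'+\cdot)$. In the constant case $\lambda_2(T_i'+\cdot)=\lambda_2(T_i+\cdot)=\lambda_h$, the cdf $G(\cdot;d,s)$ of the dominating excursion has no time argument, and the issue disappears --- which is precisely why the paper's proof works there and only there.

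Your idea of bypassing Lemma \ref{lemma: k bound} through time-truncated costs $\tilde g(t,w)=g(w)\mathbf{1}_{[0,u]}(t)$ is a reasonable device in isolation (the original proof of that lemma also leans on memorylessness), but it does not touch the central coupling difficulty above, so the proposal does not establish the conjecture. A genuinely new coupling, or a different technique altogether, is needed when both rates vary.
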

Our Theorem \ref{thm: main1} includes an affirmative answer when $\lambda_2(\cdot)$ is a constant function. Actually, by very minor modifications of the proof of Theorem \ref{thm: main1}, one may get an affirmative answer also when $\lambda_1(\cdot)$ is a constant function. However, it is still an open problem whether this is true when both $\lambda_1(\cdot)$ and $\lambda_2(\cdot)$ are time-varying. 

In the current proof of Proposition \ref{prop: LCFS-PR}, the memoryless property of the exponential distribution plays an important role in the coupling construction. Specifically, we need to couple two {\normalfont  $\text{M}_t/\text{G}(\Psi)/1/k+\text{H}(\Psi)$} queues having, respectively, the arrival rates $\lambda_1(\cdot)$ and $\lambda_2(\cdot)$. Assume that $\lambda_1(t)\leq\lambda_2(t)=\lambda_h$ for every $t\geq0$ and in both queues the total service time of the initial customer is $x$. This allowed us to apply the thinning argument (Theorem 1 from the work by Lewis and Shedler \cite{Lewis1979})  \textit{only} on the service periods of the initial customer and argue that during all other times the arrivals are dictated by an external Poisson process with rate $\lambda_h$ independent of the arrival process during the service times of the initial customer. Due to the memoryless property of the exponential distribution (or equivalently, due to the strong Markov property of the homogeneous Poisson process), the resulting arrival process is still a Poisson process with rate $\lambda_2\equiv\lambda_h$. When $\lambda_2$ varies, we do not see how to adjust this construction of the arrival process to produce a Poisson arrival process with rate $\lambda_2(\cdot)$, i.e., the current proof may not be straightforwardly extended to the more general case.

\subsection{Stochastic $\lambda(\cdot)$}\label{subsec: stochastic lambda}
It is intriguing to think whether Theorems \ref{thm: main1} and \ref{thm: main2} are valid when $\lambda(\cdot)$ is a non-negative stochastic process such that $\lambda(t)\leq\lambda_h$ for every $t\geq0$, $\mathbb{P}$-a.s. If $\lambda$ is independent of the sequence $(S_1,Y_1),(S_2,Y_2),\ldots$, then Theorems \ref{thm: main1} and \ref{thm: main2} are easily extended by applying the same reasoning that leads to \eqref{eq: initial distribution} (namely, conditioning and un-conditioning on $\lambda(\cdot)$). A class of models belonging to this setup is based on a continuous-time Markov chain $J(\cdot)$ with countable state-space $\mathcal{J}$ independent of $(S_1,Y_1),(S_2,Y_2),\ldots$ such that $\lambda(t)\equiv\lambda_{J(t)}(t)$ where $\lambda_j(\cdot),j\in\mathcal{J}$ are all deterministic rate functions bounded from above by $\lambda_h$. Note that under the additional assumption that $\lambda_j(\cdot),j\in\mathcal{J}$ are all constant functions (not necessarily identical ones), then the arrival process will become the so called \textit{Markov modulated Poisson process} (see, e.g., the classic work by Neuts \cite{Neuts1979}).

It is also logical to think about \textit{state-dependent} $\lambda(\cdot)$, i.e., when $\lambda(t)$ is modulated by the state of the queue. Note that conditioning and un-conditioning on $\lambda(\cdot)$ is not useful anymore and other solution methods should be applied, e.g., one may try to generalize the current proof. 

For example, let $L(t)$ be the queue length process and assume that $\lambda(t)\equiv\lambda_{L(t)}(t)\leq\lambda_h$ for every $t\geq0$ (with $\lambda_\ell(\cdot)$ a deterministic rate function for every $\ell\in\mathbb{Z}_{\geq0}$). For this setup, one may verify that Proposition \ref{prop: LCFS-PR} remains valid, but then, when taking the size of the waiting room $k$ to infinity, a problem arises as the workload process in such a state-dependent queue is sensitive to the discipline of the service.

Another kind of potential state-dependence occurs under the assumption that $\lambda(\cdot)$ depends on the existing workload, i.e., if $T$ is distributed like the time
until the next arrival (starting from initial workload $x$), then 
\begin{equation*}
    \mathbb{P}\left\{T>t\right\}=\exp\left\{-\int_0^t\lambda\left[t,W_\lambda(t;x)\right]{\rm d}t\right\}\ \ , \ \ \forall t\in(0,\infty). 
\end{equation*}
For more details, see, e.g., the paper by Bekker \textit{et al.} \cite{Bekker2004}.
In this case, the workload process is invariant to the change of service discipline. However, the proof of Proposition \ref{prop: LCFS-PR} requires a generalization of the thinning arguement (i.e., Theorem 1 in the work by Lewis and Shedler \cite{Lewis1979}) to state-dependent Poisson arrival processes. To the best of our knowledge, extensions in this direction (of Theorem 1 in the work by Lewis and Shedler \cite{Lewis1979}) do not appear in the existing literature and, respectively, we consider its derivation as an interesting open question for future research. 

\subsection{Bounds on $\mathbb{E}g\left[W_\lambda(\infty)\right]$} 
Consider the periodic  $\text{M}_t/\text{G}(\Psi)/1+\text{H}(\Psi)$ queue discussed in Section \ref{subsec: applications} with the notations presented in the proof of Theorem \ref{thm: super stochastic bound} and for simplicity assume that $g\in\mathcal{G}$ is such that $g(0)=0$. The regenerative property of the workload process yields, that when the queue is stable, we have the following identity of the steady-state workload:
\begin{equation}\label{eq: moment bound}
    \mathbb{E}g\left[W_\lambda(\infty)\right]=\frac{\mathbb{E}\mathcal{A}_\lambda(g)}{\mathbb{E}\xi_\lambda}\,.
\end{equation}
Note that any customer who arrives at an empty queue will join the queue. Thus, under the assumption that the $\text{M}/\text{G}(\Psi)/1+\text{H}(\Psi)$ queue with an arrival rate $\lambda_h$ is also stable and $g(0)=0$, the regenerative property of the workload process yields: 
\begin{equation*}
    \mathbb{E}g\left[W_{\lambda_h}(\infty)\right]=\frac{\mathbb{E}A_{\lambda_h}(g)}{\frac{1}{\lambda_h}+\mathbb{E}\tau_{\lambda_h}}.
\end{equation*}
It might be interesting to derive bounds for $\mathbb{E}g\left[W_{\lambda}(\infty)\right]$ in terms of $\mathbb{E}g\left[W_{\lambda_h}(\infty)\right]$. In practice, such a bound might be useful, e.g., when $\Psi$ has a product-form, because then the distribution of $W_{\lambda_h}(\infty)$ can be figuered out (at least for special cases) as described by Baccelli, Boyer, and Hebuterne \cite{Baccelli1984}.

To see, how such a bound may be constructed, recall the following result
\begin{equation*}
    \mathcal{A}_\lambda(g)\leq_{\normalfont \text{st}}\sum_{i=1}^{\iota_{\lambda_h}}A_{\lambda_h}^i,
\end{equation*}
which stems from Proposition \ref{prop: bound1}. In addition, assume that $\lambda(t)\geq\lambda_\ell$ for every $t\geq0$ for some $\lambda_\ell>0$. Since any customer who arrives to an empty queue, joins the queue, by construction (with the help of the strong Markov property) deduce that 
\begin{equation*}
\xi_\lambda\geq_{\text{st}}\tau_{\lambda(\cdot+E)}\textbf{1}_{[0,\kappa]}(E),  
\end{equation*}
where:
\begin{itemize}
    \item $E$ is a random variable distributed like the arrival epoch of the first customer in the $\text{M}_t/\text{G}(\Psi)/1+\text{H}(\Psi)$ queue.  Note that as $\lambda(t)\geq\lambda_\ell$ for every $t\geq0$, we have:
    \begin{equation*}
        \mathbb{P}\left\{E\leq\kappa\right\}\geq1-e^{-\lambda_\ell\kappa}.
    \end{equation*}

    \item For every $e\geq0$, $\tau_{\lambda(\cdot+e)}$ is distributed as $\tau_{\widetilde{\lambda}}$ such that $\widetilde{\lambda}(t)=\lambda(t+e)$ for every $t\geq0$.

    \item The random variables belonging to the collection $\{\tau_{\lambda(\cdot+e)};e>0\}\cup\{E\}$ are mutually independent and also independent of the $\text{M}_t/\text{G}(\Psi)/1+\text{H}(\Psi)$ queue.

\end{itemize} 
Now, since  $\iota_{\lambda_h}\sim\text{Geo}(e^{-\lambda_h\kappa})$, by combining these results all together, deduce that
\begin{align*}
\mathbb{E}g\left[W_{\lambda}(\infty)\right]&\leq e^{\lambda_h\kappa}\cdot\frac{\frac{1}{\lambda_h}+\mathbb{E}\tau_{\lambda_h}}{\mathbb{P}\left\{E\leq\kappa\right\}\mathbb{E}\left[\tau_{\lambda(\cdot-E')}\big|E\leq\kappa\right]}\cdot\mathbb{E}g\left[W_{\lambda_h}(\infty)\right]\\&\leq \frac{e^{\lambda_h\kappa}}{1-e^{-\lambda_\ell\kappa}}\cdot\frac{\frac{1}{\lambda_h}+\mathbb{E}\tau_{\lambda_h}}{\mathbb{E}\left[\tau_{\lambda(\cdot-E')}\big|E'\leq\kappa\right]}\cdot\mathbb{E}g\left[W_{\lambda_h}(\infty)\right]\\&\nonumber\leq \frac{e^{\lambda_h\kappa}}{1-e^{-\lambda_\ell\kappa}}\cdot\left(1+\rho_h^{-1}\right)\mathbb{E}g\left[W_{\lambda_h}(\infty)\right],    
\end{align*}
where $\rho_h\equiv\lambda_h\int_0^\infty x{\rm d} G(x)\in(0,\infty)$ and we applied: (1) Theorem \ref{thm: main1} (see, also the discussion in Section \ref{subsec: stochastic lambda}) in order to bound the ratio $\frac{\mathbb{E}\tau_{\lambda_h}}{\mathbb{E}[\tau_{\lambda(\cdot-E)}|E\leq\kappa]}$ from above by one. (2) $\tau_{\lambda(\cdot-E)}$ is not less than the service time of the first customer having distribution $G(\cdot)\equiv G(\Psi)(\cdot)$.  

Primarily, by setting $g(w)=w^k$ for some $k\in\mathbb{N}$, the proof that the $k$'th moment of the steady-state distribution of $W_\lambda(\cdot)$ is finite reduces to showing that the $k$'th moment of the steady-state distribution of $W_{\lambda_h}(\cdot)$ is finite. 

For future research, it might be interesting to try to evaluate the quality of this bound, i.e., to see whether there are tighter bounds. In addition, it is also interesting to look for lower bounds. 

\subsection{Tail distribution of $\mathcal{A}_\lambda(g)$ and $\mathcal{A}^*_\lambda(g)$}
Theorem \ref{thm: heavy-tail} (resp., Theorem \ref{thm: heavy-tail11}) indicates some pre-conditions on $G(\cdot)$ under which it states an upper bound on the rate at which $\mathbb{P}\left\{\mathcal{A}_\lambda(g)>u\right\}$ (resp., $\mathbb{P}\left\{\mathcal{A}^*_\lambda(g)>u\right\}$) vanishes as $u\to\infty$ for the special case when $g(w)=1$ for all $w\geq0$. The following questions arise:
\begin{enumerate}
    \item Is it possible to derive similar bounds for other choices of $g(\cdot)$? 

    \item If $g(w)=1$ for every $w\geq0$, is it possible to derive a lower bound on the rates at which $\mathbb{P}\left\{\mathcal{A}_\lambda(g)>u\right\}$ and  $\mathbb{P}\left\{\mathcal{A}^*_\lambda(g)>u\right\}$ vanish as $u\to\infty$?

    \item If $g(w)=1$ for every $w\geq0$, what can be said regarding the tightness of the upper bounds stated in Theorem \ref{thm: heavy-tail} and Theorem \ref{thm: heavy-tail11}? 
\end{enumerate}

\subsection{Multi-server queues:} 
Consider the $\text{M}_t/\text{G}(\Psi)/s+\text{H}(\Psi)$ queue, i.e., exactly the same model as the $\text{M}_t/\text{G}(\Psi)/1+\text{H}(\Psi)$ except that now we have $s\geq2$ servers and respectively the decision whether to join or balk is based on the virtual waiting time process instead of the workload. More specifically, at each arrival epoch, the arriving customer observes his future waiting time in each of the $s$ waiting lines (corresponding to the $s$ servers). He compares the minimal future waiting time with his patience level and joins that waiting line if and only if his patience time is not less than the observed minimum. Otherwise, he should balk. 

We do not see how the current methodology can be applied to extend Theorems \ref{thm: main1} and \ref{thm: main2} to the multi-server queue even when $\Psi(\cdot)$ has a product-form. In general, the joining/balking rule of the customers depends on the virtual waiting time process whose precise mathematical description (when $\Psi(\cdot)$ has product form) appears in, e.g.,  Section 2 of the work by Bodas, Mandjes and Ravner \cite{Bodas2023}. Notably, when the queue has a single server, the virtual waiting time under FCFS becomes the workload process and hence it is possible to phrase the joining/balking rule in terms of the workload process. This enabled us to prove Proposition \ref{prop: LCFS-PR} via an inductive argument which however fails whenever there is more than one server. 

Some existing results for the first moment and some specific choices of $g(\cdot)$ in the multi-server queue have been derived in the recent work by Bodas, Mandjes and Ravner \cite{Bodas2023} by applying Lyapunov functions. We are intrigued by the possibility of applying other techniques in order to extend the stochastic bounds appearing in the present work to the more general multi-server queueing model. 

\subsection{Observable queues:}
In the $\text{M}_t/\text{G}(\Psi)/1+\text{H}(\Psi)$ queue, each customer observes the existing workload at his arrival epoch. This assumption is inconsistent with some queues that we know from the daily life, in which each customer observes the queue length (but not the workload) at his arrival epoch. A discussion regarding such queues (when $\Psi(\cdot)$ has product-form) appears in Section 6 in the recent work by Bodas, Mandjes and Ravner \cite{Bodas2023}. In particular, the analogue of Theorem \ref{thm: eta} in that model with an observable queue is motivated by the discussion which appears above Proposition 6.1 in \cite{Bodas2023}. 

We admit that although we tried, in that model it is impossible to apply the inductive argument in the core of the proof of Proposition \ref{prop: LCFS-PR}. Consequently, the current methodology is not applicable in for deriving the analogue results under this new assumption regarding the customer's information at their arrival epochs.  

\subsection{More applications of the current methodology:} 
In the present work we applied a methodology which is based on the following steps:
\begin{enumerate}
    \item Restrict the size of the waiting room and transfer the service discipline to LCFS-PR.

    \item Couple the two queues under the above-mentioned waiting-room restriction.

    \item Prove the required stochastic bound for every finite waiting room via induction.

    \item Take the size of the waiting-room to infinity and get the desired stochastic bound. 
\end{enumerate}

We are wondering about existence of other interesting stochastic bounds for other queues that can be derived by applying similar ideas. Especially, it might be interesting to consider the same queueing model, but with different joining/balking rules. For example, we anticipate that similar arguments to those used here should lead to similar results if we allow the arriving customers to know their service times at their arrival epochs, i.e., each one of them balks iff his patience time is less than the existing workload (\textit{including} his service) at his arrival epoch. 

We also note in passing that we are optimistic regarding potential application of the presented methodology in order to derive similar results in a model with no waiting room and orbiting customers having exponential orbiting times (see, e.g., the survey by Yang and Templeton \cite{Yang1987} regarding retrial queues).  It is also intriguing to see whether the same methodology is applicable to similar queues with infinite waiting room but finite workload capacity. 
\newline\newline
\textbf{Acknowledgment:} We are indebted to Onno Boxma, Yan Dolinsky, Purva Joshi, Michel Mandjes, Ohad Perry, Liron Ravner and Danny Segev for useful discussions and practical comments on preliminary versions of the present work.


\begin{thebibliography}{99}
\bibitem{Adan1989}
Adan, I., \& Van der Wal, J. (1989). Monotonicity of the throughput of a closed queueing network in the number of jobs. \textit{Operations Research}, \textbf{37}, 953-957.

\bibitem{Asmussen2003}
Asmussen, S. (2003). \textit{Applied probability and queues {\normalfont(}Vol. 2{\normalfont)}}. New York: Springer.

\bibitem{Asmussen1999}
Asmussen, S., Kl\"uppelberg, C., \& Sigman, K. (1999). Sampling at subexponential times, with queuing applications. \textit{Stochastic Processes and Their Applications}, \textbf{79}, 265-286.

\bibitem{Baccelli1984}
Baccelli, F., Boyer, P., \& Hebuterne, G. (1984). Single-server queues with impatient customers. \textit{Advances in Applied Probability}, \textbf{16}, 887-905.

\bibitem{Bae2001}
Bae, J., Kim, S., \& Lee, E. Y. (2001). The virtual waiting time of the M/G/1 queue with impatient customers. \textit{Queueing Systems}, \textbf{38}, 485-494.

\bibitem{Baltrunas2004}
Baltr\=unas, A., Daley, D. J., \& Kl\"uppelberg, C. (2004). Tail behaviour of the busy period of a GI/GI/1 queue with subexponential service times. \textit{Stochastic Processes and Their Applications}, \textbf{111}, 237-258.

\bibitem{Barrer1957}
Barrer, D. Y. (1957). Queuing with impatient customers and ordered service. \textit{Operations Research}, \textbf{5}, 650-656.

\bibitem{Bekker2004}
Bekker, R., Borst, S. C., Boxma, O. J., \& Kella, O. (2004). Queues with workload-dependent arrival and service rates. \textit{Queueing Systems}, \textbf{46}, 537-556.

\bibitem{Benaim2022}
Bena\"im, M., \& Hurth, T. (2022). \textit{Markov Chains on Metric Spaces: A Short Course.} Springer.

\bibitem{Bhattacharya1991}
Bhattacharya, P. P., \& Ephremides, A. (1991). Stochastic monotonicity properties of multiserver queues with impatient customers. \textit{Journal of Applied Probability}, \textbf{28}, 673-682.


\bibitem{Bodas2023}
Bodas, S. A., Mandjes, M., \& Ravner, L. (2023). Statistical inference for a service system with non-stationary arrivals and unobserved balking. \textit{arXiv preprint arXiv:2311.16884}.

\bibitem{Boxma2010}
Boxma, O., Perry, D., Stadje, W., \& Zacks, S. (2010). The busy period of an M/G/1 queue with customer impatience. \textit{Journal of Applied Probability}, \textbf{47}, 130-145.


\bibitem{Boxma2011}
Boxma, O., Perry, D., \& Stadje, W. (2011). The M/G/1+ G queue revisited. \textit{Queueing Systems}, \textbf{67}, 207-220.

\bibitem{Brandt2013}
Brandt, A., \& Brandt, M. (2013). Workload and busy period for M/GI/1 with a general impatience mechanism. \textit{Queueing Systems}, \textbf{75}, 189-209.

\bibitem{De Meyer1980}
De Meyer, A., \& Teugels, J. L. (1980). On the asymptotic behaviour of the distributions of the busy period and service time in M/G/1. \textit{Journal of Applied Probability}, \textbf{17}, 802-813.

\bibitem{De Vries2018}
De Vries, J., Roy, D., \& De Koster, R. (2018). Worth the wait? How restaurant waiting time influences customer behavior and revenue. \textit{Journal of Operations Management}, \textbf{63}, 59-78.

\bibitem{Denisov2010}
Denisov, D., \& Shneer, S. (2010). Global and local asymptotics for the busy period of an M/G/1 queue. \textit{Queueing Systems}, \textbf{64}, 383-393.

\bibitem{Doney1989}
Doney, R. A. (1989). On the asymptotic behavior of first passage times for transient random walk. \textit{Probability Theory and Related Fields} , \textbf{81}, 239-246.

\bibitem{Embrechts1979}
Embrechts, P., Goldie, C. M., \& Veraverbeke, N. (1979). Subexponentiality and infinite divisibility. \textit{Zeitschrift f\"ur Wahrscheinlichkeitstheorie und verwandte Gebiete}, \textbf{49}, 335-347.

\bibitem{Gavish1977}
Gavish, B., \& Schweitzer, P. J. (1977). The Markovian queue with bounded waiting time. \textit{Management Science}, \textbf{23}, 1349-1357.

\bibitem{Glynn1994}
Glynn, P. W. (1994). Poisson's equation for the recurrent M/G/1 queue. \textit{Advances in Applied Probability}, \textbf{26}, 1044-1062.

\bibitem{Glynn2023}
Glynn, P. W., Jacobovic, R., \& Mandjes, M. (2023). Moments of polynomial functionals in L\'evy-driven queues with secondary jumps. \textit{arXiv preprint arXiv:2310.11137}.


\bibitem{Harrison1977}
Harrison, J. M., \& Lemoine, A. J. (1977). Limit theorems for periodic queues. \textit{Journal of Applied Probability}, \textbf{14}, 566-576.

\bibitem{Heyman1982}
Heyman, D. P. (1982). On Ross's conjectures about queues with non-stationary Poisson arrivals. \textit{Journal of Applied Probability}, \textbf{19}, 245-249.

\bibitem{Inoue2018}
Inoue, Y., Boxma, O., Perry, D., \& Zacks, S. (2018). Analysis of $\text{M}^x$/G/1 queues with impatient customers. \textit{Queueing Systems}, \textbf{89}, 303-350.

\bibitem{Inoue2023}
Inoue, Y., Ravner, L., \& Mandjes, M. (2023). Estimating customer impatience in a service system with unobserved balking. \textit{Stochastic Systems}, \textbf{13}, 181-210.

\bibitem{Jacobovic2023a}
Jacobovic, R., Levering, N., \& Boxma, O. (2023). Externalities in the M/G/1 queue: LCFS-PR versus FCFS. \textit{Queueing Systems}, \textbf{104}, 239-267.

\bibitem{Jacobovic2024}
Jacobovic, R., \& Mandjes, M. (2024). Externalities in queues as stochastic processes: The case of FCFS M/G/1. \textit{Stochastic Systems}, \textbf{14}, 1-21.


\bibitem{Jelenkovic2004}
Jelenkovi\'c, P. R., \& Mom\v cilovi\'c, P. (2004). Large deviations of square root insensitive random sums. \textit{Mathematics of Operations Research}, \textbf{29}, 398-406.

\bibitem{Jouini2007}
Jouini, O., \& Dallery, Y. (2007). Monotonicity properties for multiserver queues with reneging and finite waiting lines. \textit{Probability in the Engineering and Informational Sciences}, \textbf{21}, 335-360.

\bibitem{Kella1991}
Kella, O., \& Whitt, W. (1991). Queues with server vacations and L\'evy processes with secondary jump input. \textit{The Annals of Applied Probability}, \textbf{1}, 104-117.

\bibitem{Kella1992}
Kella, O., \& Whitt, W. (1992). Useful martingales for stochastic storage processes with L\'evy input. \textit{Journal of Applied Probability}, \textbf{29}, 396-403.

\bibitem{Lemoine1989}
Lemoine, A. J. (1989). Waiting time and workload in queues with periodic Poisson input. \textit{Journal of Applied Probability}, \textbf{26}, 390-397.

\bibitem{Lewis1979}
Lewis, P. W., \& Shedler, G. S. (1979). Simulation of nonhomogeneous Poisson processes by thinning. \textit{Naval Research Logistics Quarterly}, \textbf{26}, 403-413.


\bibitem{Massey1994}
Massey, W. A., \& Whitt, W. (1994). An analysis of the modified offered-load approximation for the nonstationary Erlang loss model. \textit{The Annals of Applied Probability}, 1145-1160.

\bibitem{Meyn2012}
Meyn, S. P., \& Tweedie, R. L. (2012). \textit{Markov chains and stochastic stability}. Springer Science \& Business Media.

\bibitem{Moyal2022}
Moyal, P., \& Perry, O. (2022). Many-server limits for service systems with dependent service and patience times. \textit{Queueing Systems}, \textbf{100}, 337-339.

\bibitem{Neuts1979}
Neuts, M. F. (1979). A versatile Markovian point process. Journal of Applied Probability, \textbf{16}, 764-779.

\bibitem{Reich2012}
Reich, M. (2012). \textit{The offered-load process: Modeling, inference and applications} (Doctoral dissertation, Technion-Israel Institute of Technology, Faculty of Industrial and Management Engineering).

\bibitem{Rolski1981}
Rolski, T. (1981). Queues with non-stationary input stream: Ross's conjecture. \textit{Advances in Applied Probability}, \textbf{13}, 603-618.

\bibitem{Rolski1986}
Rolski, T. (1986). Upper bounds for single server queues with doubly stochastic Poisson arrivals. \textit{Mathematics of Operations Research}, \textbf{11}, 442-450.

\bibitem{Rolski1987}
Rolski, T. (1987). Approximation of periodic queues. \textit{Advances in Applied Probability}, \textbf{19}, 691-707.

\bibitem{Rolski1989}
Rolski, T. (1989). Relationships between characteristics in periodic Poisson queues. \textit{Queueing Systems}, \textbf{4}, 17-26.

\bibitem{Ross1978}
Ross, S. M. (1978). Average delay in queues with non-stationary Poisson arrivals. \textit{Journal of Applied Probability}, 15, 602-609.

\bibitem{Shaked2007}
Shaked, M., \& Shanthikumar, J. G. (Eds.). (2007). \textit{Stochastic orders}. New York, NY: Springer New York.

\bibitem{Shanthikumar1986}
Shanthikumar, J. G., \& Yao, D. D. (1986). The effect of increasing service rates in a closed queuing network. \textit{Journal of Applied Probability}, \textbf{23}, 474-483.

\bibitem{Shanthikumar1987}
Shanthikumar, J. G., \& Yao, D. D. (1987). Stochastic monotonicity of the queue lengths in closed queueing networks. \textit{Operations Research}, \textbf{35}, 583-588.

\bibitem{Shanthikumar1989}
Shanthikumar, J. G., \& Yao, D. D. (1989). Stochastic monotonicity in general queueing networks. \textit{Journal of Applied Probability}, \textbf{26}, 413-417.

\bibitem{Perry1995}
Perry, D., \& Asmussen, S. (1995). Rejection rules in the M/G/1 queue. \textit{Queueing Systems}, \textbf{19}, 105-130.


\bibitem{Van Dijk1989}
Van Dijk, N. M., \& van der Wal, J. (1989). Simple bounds and monotonicity results for finite multi-server exponential tandem queues. \textit{Queueing Systems}, \textbf{4}, 1-15.

\bibitem{Whitt2014}
Whitt, W. (2014). Heavy-traffic limits for queues with periodic arrival processes. \textit{Operations Research Letters}, \textbf{42}, 458-461.

\bibitem{Whitt2018}
Whitt, W. (2018). Time-varying queues. \textit{Queueing Models and Service Management}, \textbf{1}, 79-164.

\bibitem{Wu2019}
Wu, C., Bassamboo, A., \& Perry, O. (2019). Service system with dependent service and patience times. \textit{Management Science}, \textbf{65}, 1151-1172.

\bibitem{Wu2022}
Wu, C. A., Bassamboo, A., \& Perry, O. (2022). When service times depend on customers' delays: A solution to two empirical challenges. \textit{Operations Research, forthcoming}.

\bibitem{Yang1987}
Yang, T., \& Templeton, J. G. C. (1987). A survey on retrial queues. \textit{Queueing systems}, \textbf{2}, 201-233.

\bibitem{Yu2023}
Yu, L., \& Perry, O. (2023). Many-server heavy-traffic limits for queueing systems with perfectly correlated service and patience times. \textit{Mathematics of Operations Research}, \textbf{48}, 1119-1157.

\bibitem{Zwart2001}
Zwart, A. P. (2001). Tail asymptotics for the busy period in the GI/G/1 queue. \textit{Mathematics of Operations Research}, \textbf{26}, 485-493.


\end{thebibliography}
\end{document}